\title{On log local Cartier transform of higher level in characteristic $p$}
\author{Sachio Ohkawa} 
\date{}
\begin{document}

\maketitle
\begin{abstract}
In our previous paper, given an integral log smooth morphism $X\to S$ of fine log schemes of characteristic $p>0$, we studied the Azumaya nature of the sheaf of log differential operators of higher level and constructed a splitting module of it under the existence of a certain lifting modulo $p^{2}$.
In this paper, under a certain liftability assumption which is stronger than our previous paper, we construct another splitting module of our Azumaya algebra over a  scalar extension which is smaller than our previous paper. 
As an application, we construct an equivalence, which we call the log local Cartier transform of higher level, between certain ${\cal D}$-modules and certain Higgs modules.
We also discuss the compatibility of the log Frobenius descent and the log local Cartier transform and the relation between the splitting module constructed in this paper and that constructed in the previous paper.
Our result can be considered as a generalization of the result of Ogus-Vologodsky, Gros-Le Stum-Quir\'os to the case of log schemes and that of Schepler
to the case of higher level.
\end{abstract}
\theoremstyle{plain}
\newtheorem{theo}{Theorem}[section]
\newtheorem{defi}[theo]{Definition}
\newtheorem{lemm}[theo]{Lemma}
\newtheorem{pro}[theo]{Proposition}
\newtheorem{cor}[theo]{Corollary}
\newtheorem{Prod}[theo]{Proposition-Definition}

\theoremstyle{definition}
\newtheorem{rem}[theo]{Remark}
\newtheorem{ex}[theo]{Example}

\renewcommand*\proofname{\upshape{\bfseries{Proof}}}

\section{Introduction}
Let $X\to S$ be a smooth scheme in positive characteristic.
Ogus-Vologodsky \cite{OV} established two Simpson type equivalences of categories between certain $\cal D$-modules on $X$ and certain Higgs modules on $X'$, thus establishing a nonabelian Hodge theory in characteristic $p$.
Here $X'\to S$ denotes the Frobenius pull back of $X\to S$.
Their construction is based on the Azumaya nature of the sheaf ${\cal D}_{X/S}^{(0)}$ of differential operators of level $0$ over its center.
Actually, ${\cal D}_{X/S}^{(0)}$ contains the symmetric algebra $S^{\cdot}{\cal T}_{X'/S}$ of the tangent bundle of $X'\to S$ as the center via the $p$-curvature map.
Ogus-Vologodsky constructed a splitting module of ${\cal D}_{X/S}^{(0)}$ over $\hat{\Gamma}_{\cdot}{\cal T}_{X'/S}$ under the existence of a lifting of $X'\to S$ modulo $p^{2}$.
Here $\hat{\Gamma}_{\cdot}{\cal T}_{X'/S}$ denotes the completion of the PD algebra ${\Gamma}_{\cdot}{\cal T}_{X'/S}$ by its augmentation ideal.
They also constructed a splitting module of ${\cal D}_{X/S}^{(0)}$ over $\hat{S}^{\cdot}{\cal T}_{X'/S}$ under the existence of a lifting of the relative Frobenius modulo $p^{2}$.
Here $\hat{S}^{\cdot}{\cal T}_{X'/S}$ denotes the completion of $S^{\cdot}{\cal T}_{X'/S}$ by its augmentation ideal.
The latter assumption is obviously stronger than the former assumption.
Furthermore, it is known that a lifting of the relative Frobenius modulo $p^{2}$ rarely exists globally on $X$.
In this sense, they call the equivalence obtained from the former splitting module the global Cartier transform and that from the latter splitting module the local Cartier transform.
Various generalizations of the Cartier transform such as the logarithmic version \cite{S} and the case of the sheaf ${\cal D}_{X/S}^{(m)}$ of differential operators of level $m$ \cite{GLQ} are studied by Schepler and Gros-Le Stum-Quir\'os.
Let us first recall the technical construction of the higher level version of the local Cartier transform \cite{GLQ}.

\subsection{The local Cartier transform of higher level}
Let $X\to S$ be a smooth morphism of schemes in positive characteristic of the relative dimension $r$.
We consider the following commutative diagram:
\[\xymatrix{
{X}\ar[r] \ar@/^1pc/[rr]^{F_{X}}\ar@/_/[rd] &\ar@{}[rd]|{\square} {X'} \ar[d] \ar[r] & X \ar[d] \\
& S \ar[r]^{F_{S}} & S ,}
\]
where $F_{X}$ (resp. $F_{S}$) denotes the $(m+1)$-st composition of the absolute Frobenius of $X$ (resp. $S$)
and the right square is cartesian.
We denote $X\to X'$ by $F_{X/S}$ and call it the $(m+1)$-st relative Frobenius of $X\to S$.
Let ${\cal P}_{X/S, (m)}$ be the structure sheaf of the $m$-PD envelope of the diagonal $X\to X\times_{S} X$ and ${\Gamma}_{\cdot}{\Omega}^{1}_{X'/S}$ the PD algebra defined by the cotangent bundle ${\Omega}^{1}_{X'/S}$ of $X'\to S$.
Assume that we are given a strong lifting which is a certain lifting of $F_{X/S}$ modulo $p^{2}$ (for the definition of strong lifting, see [GLQ, Definition 4.1]).
Gros-Le Stum-Quir\'os constructed the morphism of PD-algebras [GLQ, Proposition 4.7]
\begin{equation}\label{e7}
\Psi: F^{*}_{X/S}{\Gamma}_{\cdot}{\Omega}^{1}_{X'/S}\to {\cal P}_{X/S, (m)}
\end{equation}
and proved that the ${\cal O}_{X}$-dual of the scalar extension $F_{X/S}^{*}{\Gamma}_{\cdot}{\Omega}^{1}_{X'/S}\otimes_{{\cal O}_{X}} {{\cal O}_{X\times_{X'}X}}\to {\cal P}_{X/S, (m)}$ of $\Psi$ defines a splitting isomorphism [GLQ, Theorem 4.13] 
\begin{equation}\label{e5}
\hat{S^{\cdot}}{\cal T}_{X'/S}\otimes_{S^{.}{\cal T}_{X'/S}}{\cal D}_{X/S}^{(m)}\xrightarrow{\cong}{\cal E}nd_{\hat{S^{\cdot}}{\cal T}_{X'/S}}(F_{X/S}^{*}\hat{S^{\cdot}}{\cal T}_{X'/S}).
\end{equation}
Here we regard ${\cal D}_{X/S}^{(m)}$ as a $S^{\cdot}{\cal T}_{X'/S}$-module via the higher level version of the classical
$p$-curvature map constructed in the section 3 of \cite{GLQ}.
Note that, because $F_{X/S}$ is finite flat of rank $p^{r(m+1)}$, $F_{X/S}^{*}\hat{S^{\cdot}}{\cal T}_{X'/S}$ is a locally free $\hat{S^{\cdot}}{\cal T}_{X'/S}$-module of rank $p^{r(m+1)}$.
Then, by the Morita equivalence, they obtained an equivalence of categories [GLQ, Proposition 5.7]
\begin{equation}\label{e1}
\left(
		\begin{array}{c}
		\text{The category of} \\
		\text{left $\hat{S^{\cdot}}{\cal T}_{X'/S}\otimes_{S^{.}{\cal T}_{X'/S}}{\cal D}_{X/S}^{(m)}$-modules on $X$}
		\end{array}
	\right)
	\xrightarrow{\cong} 
	\left(
		\begin{array}{c}
		\text{The category of} \\
		\text{$\hat{S^{\cdot}}{\cal T}_{X'/S}$-modules on $X'$}
		\end{array}
	\right).
\end{equation}
This is regarded as the higher level version of [OV, Theorem 2.11]. 
We call it the local Cartier transform of higher level.
Note that, in the proof of (\ref{e5}), the local freeness of $F_{X/S}$ is essential for their dual calculation.
They also proved the Azumaya nature of ${\cal D}_{X/S}^{(m)}$ over $S^{\cdot}{\cal T}_{X'/S}$ [GLQ, Proposition 3.6],
but they does not use it for the proof of (\ref{e5}).

\subsection{Logarithmic version} \label{ss1}
One of aims of this paper is to construct the logarithmic version of the local Cartier transform of higher level.
Let $X\to S$ be an integral log smooth morphism of fine log schemes in positive characteristic.
A difficulty in the logarithmic case is the fact that the relative Frobenius of $X\to S$ is not necessarily finite flat even when $X\to S$ is log smooth.
Sometimes we cannot simply generalize the theory of differential modules in positive characteristic to the case of log schemes.
For example, it was well-known that the Cartier descent cannot be generalized in a naive manner.
We will overcome this difficulty by using Lorenzon-Montagnon indexed algebras ${\cal A}_{X}^{gp}$ and ${\cal B}^{(m+1)}_{X/S}$ associated to log structures and 
modify the proof due to Gros-Le Stum-Quir\'os to work in the logarithmic case.
Roughly speaking, ${\cal A}^{gp}_{X}$ and ${\cal B}^{(m+1)}_{X/S}$ are algebras indexed by ${\cal M}^{gp}_{X}/{\cal O}^{*}_{X}$ and there is a natural inclusion ${\cal B}^{(m+1)}_{X/S} \hookrightarrow {\cal A}^{gp}_{X}$
which is locally free in the category of ${\cal M}^{gp}_{X}/{\cal O}^{*}_{X}$-indexed modules, which gives a natural extension of $F^{*}_{X/S}: {\cal O}_{X'}\to {\cal O}_{X}$.
Here ${\cal M}_{X}^{gp}$ denotes the group envelope of the log structure of $X$ and ${\cal O}^{*}_{X}$ denotes the sheaf of invertible functions on $X$.

Let $F_{X/S}$ be the $(m+1)$-st relative Frobenius $X\to X'$ of $X\to S$.
Let ${\cal P}_{X/S, (m)}$ be the structure sheaf of the log $m$-PD envelope of the diagonal $X\to X\times_{S} X$.
In our previous paper \cite{O}, we proved the Azumaya nature of the indexed version of the sheaf
\begin{equation*}
\tilde{{\cal D}}^{(m)}_{X/S}:={\cal A}^{gp}_{X}\otimes_{{\cal O}_{X}}{\cal D}^{(m)}_{X/S}
\end{equation*}
of logarithmic differential operators of level $m$ over its center, which is isomorphic to ${\cal B}^{(m+1)}_{X/S}\otimes_{{\cal O}_{X'}} S^{.}{\cal T}_{X'/S}$
via the $p^{m+1}$-curvature map.
Here ${\cal T}_{X'/S}$ denotes the log tangent bundle of $X'\to S$.

In this paper, we will first define the notion of log strong lifting (Definition \ref{d1}), which is a certain lifting of the $(m+1)$-st relative Frobenius of $X\to S$ modulo $p^{2}$ and generalizes the notion of strong lifting defined  in [GLQ, Definition 4.4] to the case of log schemes.
Under the existence of a log strong lifting, we construct the divided Frobenius map (\ref{d6})
\begin{equation*}
\Psi: F^{*}_{X/S}{\Gamma}_{\cdot}{\Omega}^{1}_{X'/S}\to {\cal P}_{X/S, (m)},
\end{equation*}
which generalizes (\ref{e7}) to the case of log schemes.
We next prove the following theorem (Theorem \ref{t8}), which is the log version of (\ref{e5}).
\begin{theo}\label{t8}
Let $X\to S$ be an integral log smooth morphism in positive characteristic with a log strong lifting.
Then there exists an isomorphism of ${\cal M}^{gp}_{X}/{\cal O}^{*}_{X}$-indexed algebras
\begin{equation*}
{\cal B}_{X/S}^{(m+1)}\otimes_{{\cal O}_{X'}}\hat{S^{\cdot}}{\cal T}_{X'/S} \otimes_{{\cal B}_{X/S}^{(m+1)}\otimes_{{\cal O}_{X'}}S^{\cdot}{\cal T}_{X'/S}}\tilde{{\cal D}}^{(m)}_{X/S}\xrightarrow{\cong}
{\cal E}nd_{{\cal B}_{X/S}^{(m+1)}\otimes_{{\cal O}_{X'}}\hat{S^{\cdot}}{\cal T}_{X'/S}}\left({\cal A}_{X}^{gp}\otimes_{{\cal O}_{X}}F_{X/S}^{*} \hat{S^{\cdot}}{\cal T}_{X'/S}\right)
\end{equation*}
depending on the choice of a log strong lifting.
\end{theo}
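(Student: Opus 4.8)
The plan is to transplant the strategy of Gros--Le Stum--Quir\'os for (\ref{e5}) into the category of ${\cal M}^{gp}_{X}/{\cal O}^{*}_{X}$-indexed modules, using the indexed algebras ${\cal A}^{gp}_{X}$ and ${\cal B}^{(m+1)}_{X/S}$ to repair the failure of $F_{X/S}$ to be finite flat in the logarithmic setting. Since the statement is local on $X$, I would work \'etale-locally, where $X\to S$ admits a chart by log coordinates and where ${\cal A}^{gp}_{X}$, ${\cal B}^{(m+1)}_{X/S}$, ${\cal P}_{X/S,(m)}$, $\tilde{{\cal D}}^{(m)}_{X/S}$, ${\cal T}_{X'/S}$, the $p^{m+1}$-curvature map and the divided Frobenius $\Psi$ of (\ref{d6}) all admit explicit descriptions parallel to the non-logarithmic case.

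First I would construct the candidate morphism. Set $R:={\cal B}^{(m+1)}_{X/S}\otimes_{{\cal O}_{X'}}\hat{S^{\cdot}}{\cal T}_{X'/S}$, let $Z:={\cal B}^{(m+1)}_{X/S}\otimes_{{\cal O}_{X'}}S^{\cdot}{\cal T}_{X'/S}$ denote the center of $\tilde{{\cal D}}^{(m)}_{X/S}$ identified via the $p^{m+1}$-curvature map (\cite{O}), and put $M:={\cal A}^{gp}_{X}\otimes_{{\cal O}_{X}}F_{X/S}^{*}\hat{S^{\cdot}}{\cal T}_{X'/S}$. Using the identification ${\cal A}^{gp}_{X}\otimes_{{\cal O}_{X}}F^{*}_{X/S}(-)\cong{\cal A}^{gp}_{X}\otimes_{{\cal B}^{(m+1)}_{X/S}}\big({\cal B}^{(m+1)}_{X/S}\otimes_{{\cal O}_{X'}}(-)\big)$ together with the local freeness of ${\cal B}^{(m+1)}_{X/S}\hookrightarrow{\cal A}^{gp}_{X}$ in the indexed category, one sees that $M$ is a locally free $R$-module of finite rank, so that ${\cal E}nd_{R}(M)$ is a well-defined indexed algebra. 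The map of the theorem is then obtained by scalar-extending $\Psi$ suitably (along ${\cal O}_{X}\to{\cal A}^{gp}_{X}$ and the second ${\cal O}_{X}$-structure of ${\cal P}_{X/S,(m)}$) and taking its indexed dual: dualizing turns the coalgebra structure of ${\cal P}_{X/S,(m)}$ into the algebra structure of $\tilde{{\cal D}}^{(m)}_{X/S}$ and equips $M$ with a left $\tilde{{\cal D}}^{(m)}_{X/S}$-module structure, and one checks, exactly as in \cite{GLQ}, that $Z$ acts on $M$ through $R$. This yields a morphism of indexed $R$-algebras
\begin{equation*}
\phi\colon R\otimes_{Z}\tilde{{\cal D}}^{(m)}_{X/S}\longrightarrow{\cal E}nd_{R}(M),
\end{equation*}
and the verification that $\phi$ is a well-defined morphism of indexed algebras respecting the $R$-structures is formal, being a functorial consequence of the properties of $\Psi$ and of the compatibility of the Lorenzon-Montagnon duality formalism with scalar extension.

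To prove that $\phi$ is an isomorphism I would argue that both sides are locally free $R$-modules of the same finite rank — for the source this follows from the explicit local structure of $\tilde{{\cal D}}^{(m)}_{X/S}$ over the indexed scalar extension of $S^{\cdot}{\cal T}_{X'/S}$, and for the target from $M$ being locally free of finite rank — and then use that $\hat{S^{\cdot}}{\cal T}_{X'/S}$, hence $R$, is complete for its augmentation ideal $I$: by a successive-approximation argument on matrix coefficients it suffices that $\phi\otimes_{R}R/I$ be an isomorphism. Here $R/I={\cal B}^{(m+1)}_{X/S}$ and $M\otimes_{R}R/I\cong{\cal A}^{gp}_{X}$, so both sides of $\phi\otimes_{R}R/I$ become ${\cal E}nd_{{\cal B}^{(m+1)}_{X/S}}({\cal A}^{gp}_{X})$ and the reduced map is the splitting of the Azumaya algebra $\tilde{{\cal D}}^{(m)}_{X/S}$ modulo the augmentation ideal of $S^{\cdot}{\cal T}_{X'/S}$ over ${\cal B}^{(m+1)}_{X/S}$; concretely this is the perfectness of the indexed pairing induced by $\Psi$ between the divided-power and divided-polynomial algebras involved, the higher-level PD, indexed analogue of the corresponding computation of \cite{GLQ}. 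It is precisely the hypothesis that the lifting is a \emph{log} strong lifting that makes $\Psi$ compatible with the two ${\cal O}_{X}$-structures of ${\cal P}_{X/S,(m)}$ to the order needed for the associated matrix (after passing to ${\cal A}^{gp}_{X}$) to be triangular with invertible diagonal, so that the pairing is perfect.

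I expect the main obstacle to be not the overall shape of the argument but the passage into the indexed category. One must check that pull-back along the non-flat morphism $F_{X/S}$, which does not commute with ${\cal H}om_{{\cal O}_{X'}}(-,{\cal O}_{X'})$, is repaired by tensoring with ${\cal A}^{gp}_{X}$; that in the category of ${\cal M}^{gp}_{X}/{\cal O}^{*}_{X}$-indexed modules ${\cal H}om$ commutes with the relevant scalar extensions, that ${\cal E}nd_{R}(M)\cong M\otimes_{R}M^{\vee}$ for $M$ locally free of finite rank, and that the two ${\cal O}_{X}$-module structures on ${\cal P}_{X/S,(m)}$ interact correctly with the grading by ${\cal M}^{gp}_{X}/{\cal O}^{*}_{X}$. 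Once this bookkeeping is settled, the construction of $\phi$ and the reduction to the logarithmic Cartier-type computation modulo the augmentation ideal go through, and $\phi$ is the asserted isomorphism depending on the chosen log strong lifting.
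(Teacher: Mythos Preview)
Your construction of the candidate map $\phi$ is in substance the paper's: the paper packages the dual of $\Psi_n$ as an $m$-PD stratification $\{\rho_n\}$ on $F_{X/S}^{*}\Gamma_{\cdot}\Omega^{1}_{X'/S}$ (Proposition~\ref{p1}), dualizes to obtain the $\tilde{{\cal D}}^{(m)}_{X/S}$-action on $M$, and checks that the $p^{m+1}$-curvature acts through the $\hat{S^{\cdot}}{\cal T}_{X'/S}$-structure, yielding exactly your $\phi$.

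Where you diverge is in proving $\phi$ is an isomorphism. You propose to transplant the dual computation of \cite{GLQ} into the indexed category and finish by reducing modulo the augmentation ideal $I\subset R$ and invoking completeness. The paper deliberately avoids this: as the introduction says, the argument for (\ref{e5}) rests on the local freeness of $F_{X/S}$, which fails logarithmically, and the author deems it ``hard to imitate''. Instead the paper gives a two-line proof: $M={\cal A}^{gp}_{X}\otimes_{{\cal O}_{X}}F_{X/S}^{*}\hat{S^{\cdot}}{\cal T}_{X'/S}$ is locally free over $R$ of rank $p^{r(m+1)}$ by Theorem~\ref{e}, the source $R\otimes_{Z}\tilde{{\cal D}}^{(m)}_{X/S}$ is Azumaya over $R$ of the same rank by Theorem~\ref{t1}(2), and then [S, Corollary 2.5]---any indexed-algebra homomorphism from a rank-$n$ Azumaya algebra into ${\cal E}nd$ of a rank-$n$ locally free module is an isomorphism---concludes immediately. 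No matrix computation, no Nakayama over $\hat{S^{\cdot}}{\cal T}_{X'/S}$, no further appeal to the log-strong-lifting hypothesis beyond building $\phi$.

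Your reduction-mod-$I$ step is the soft spot. After killing $I$ the map becomes the action
\[
\tilde{{\cal D}}^{(m)}_{X/S}\otimes_{Z}{\cal B}^{(m+1)}_{X/S}\longrightarrow{\cal E}nd_{{\cal B}^{(m+1)}_{X/S}}\bigl({\cal A}^{gp}_{X}\bigr),
\]
and this is the \emph{tautological} action of log differential operators on ${\cal A}^{gp}_{X}$: the contribution of $\Psi$ sits in positive degrees of $\hat{S^{\cdot}}{\cal T}_{X'/S}$ and is annihilated by the reduction, so your description of the reduced map as ``the perfectness of the indexed pairing induced by $\Psi$'' and your claim that the log-strong-lifting hypothesis makes the matrix triangular are not accurate at this stage. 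You are left needing that the tautological action splits $\tilde{{\cal D}}^{(m)}_{X/S}$ over ${\cal B}^{(m+1)}_{X/S}$ modulo the $p^{m+1}$-curvature ideal---a log higher-level Cartier-descent statement you have not supplied. The cleanest proof of \emph{that} is once again the Azumaya property of Theorem~\ref{t1}(2) together with [S, Corollary 2.5], at which point one may as well apply the same argument directly over $R$, as the paper does.
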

As is mentioned above, since Gros-Le Stum-Quir\'os's dual argument in the proof of (\ref{e5}) strongly depend on the local freeness of the $(m+1)$-st relative Frobenius of $X\to S$,
it is hard to imitate their proof in the logarithmic case.
We will give a simpler proof based on the Azumaya nature of $\tilde{{\cal D}}^{(m)}_{X/S}$.
Then we will obtain our main result (Theorem \ref{t9}), which we call the log local Cartier transform of higher level, by using the indexed version of Morita equivalence due to Schepler.
\begin{theo}
Let $\cal J$ be a sheaf of ${\cal M}^{gp}_{X}/{\cal O}^{*}_{X}$-sets on $X$.
Then there exists an equivalence of categories
\begin{equation*}
C_{\tilde{F}}: 
{\left(
\begin{array}{c}
\text{The category of } {\cal J}\text{-indexed left }\\
 \hat{S^{\cdot}}{\cal T}_{X'/S}\otimes_{S^{.}{\cal T}_{X'/S}}\tilde{{\cal D}}_{X/S}^{(m)} \text{-modules}
\end{array}
\right)}
\xrightarrow{\cong} 
{\left(
\begin{array}{c}
\text{The category of } {\cal J}\text{-indexed left }\\
 {\cal B}_{X/S}^{(m+1)}\otimes_{{\cal O}_{X'}}\hat{S^{\cdot}}{\cal T}_{X'/S} \text{-modules}
\end{array}
\right)}.
\end{equation*}

\end{theo}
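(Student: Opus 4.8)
The plan is to derive the equivalence $C_{\tilde F}$ formally from the splitting isomorphism of Theorem~\ref{t8} together with the indexed version of Morita equivalence due to Schepler, so that the only point demanding genuine verification is that the module appearing in Theorem~\ref{t8} is a local progenerator. To fix notation, put
\begin{gather*}
{\cal R}:={\cal B}^{(m+1)}_{X/S}\otimes_{{\cal O}_{X'}}\hat{S^{\cdot}}{\cal T}_{X'/S},\qquad
{\cal A}:=\hat{S^{\cdot}}{\cal T}_{X'/S}\otimes_{S^{\cdot}{\cal T}_{X'/S}}\tilde{{\cal D}}^{(m)}_{X/S},\\
{\cal E}:={\cal A}_{X}^{gp}\otimes_{{\cal O}_{X}}F_{X/S}^{*}\hat{S^{\cdot}}{\cal T}_{X'/S},
\end{gather*}
all regarded as ${\cal M}^{gp}_{X}/{\cal O}^{*}_{X}$-indexed objects. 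Theorem~\ref{t8} provides an isomorphism of indexed algebras ${\cal A}\xrightarrow{\cong}{\cal E}nd_{{\cal R}}({\cal E})$, depending on the choice of a log strong lifting $\tilde F$; through it ${\cal E}$ becomes an $({\cal A},{\cal R})$-bimodule in the indexed category, the left ${\cal A}$-action being via this isomorphism and the right ${\cal R}$-action its tautological one (${\cal R}$ being commutative).

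The main step, which I expect to be the principal obstacle, is to check that ${\cal E}$ is a local progenerator over ${\cal R}$ in the indexed sense, i.e. a locally free indexed ${\cal R}$-module of finite rank whose rank is everywhere positive (hence faithfully flat). Recall from Section~\ref{ss1} that the canonical inclusion ${\cal B}^{(m+1)}_{X/S}\hookrightarrow{\cal A}^{gp}_{X}$ exhibits ${\cal A}^{gp}_{X}$ as a locally free ${\cal B}^{(m+1)}_{X/S}$-module of finite rank in the category of ${\cal M}^{gp}_{X}/{\cal O}^{*}_{X}$-indexed modules. Since $F^{*}_{X/S}\hat{S^{\cdot}}{\cal T}_{X'/S}={\cal O}_{X}\otimes_{{\cal O}_{X'}}\hat{S^{\cdot}}{\cal T}_{X'/S}$ and ${\cal B}^{(m+1)}_{X/S}$ is an ${\cal O}_{X'}$-algebra, one identifies ${\cal E}\cong{\cal A}^{gp}_{X}\otimes_{{\cal B}^{(m+1)}_{X/S}}{\cal R}$, so local freeness of finite rank and positivity of the rank for ${\cal E}$ over ${\cal R}$ follow from the corresponding properties of ${\cal A}^{gp}_{X}$ over ${\cal B}^{(m+1)}_{X/S}$ by base change along ${\cal B}^{(m+1)}_{X/S}\to{\cal R}$. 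This is exactly the point at which the failure of the $(m+1)$-st relative Frobenius to be finite flat is circumvented: it is replaced by the genuinely locally free indexed extension ${\cal B}^{(m+1)}_{X/S}\hookrightarrow{\cal A}^{gp}_{X}$. The one technical subtlety is in tracking the augmentation-adic completion $\hat{S^{\cdot}}{\cal T}_{X'/S}$ — one must see that tensoring by a locally free module of finite rank commutes with this completion — which is clear locally because ${\cal T}_{X'/S}$ is locally free.

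Finally I would invoke the indexed version of Morita equivalence due to Schepler. For a sheaf ${\cal J}$ of ${\cal M}^{gp}_{X}/{\cal O}^{*}_{X}$-sets, since ${\cal E}$ is a local progenerator over ${\cal R}$ and ${\cal A}\cong{\cal E}nd_{{\cal R}}({\cal E})$, the functors
\[
C_{\tilde F}\colon {\cal M}\longmapsto {\cal H}om_{{\cal A}}({\cal E},{\cal M}),\qquad {\cal N}\longmapsto {\cal E}\otimes_{{\cal R}}{\cal N}
\]
are mutually quasi-inverse equivalences between the category of ${\cal J}$-indexed left ${\cal A}$-modules and the category of ${\cal J}$-indexed left ${\cal R}$-modules; the unit and counit are the coevaluation and evaluation maps, which are isomorphisms precisely because ${\cal E}$ is a local progenerator, and $C_{\tilde F}({\cal M})$ may equivalently be described as ${\cal E}^{\vee}\otimes_{{\cal A}}{\cal M}$ with ${\cal E}^{\vee}={\cal H}om_{{\cal R}}({\cal E},{\cal R})$. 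Here one checks that the indexed internal hom and tensor product preserve the ${\cal J}$-indexing and that the induced ${\cal R}$-module structure on ${\cal H}om_{{\cal A}}({\cal E},{\cal M})$ is the expected one, which is part of the formalism of ${\cal J}$-indexed modules. Unwinding ${\cal A}$ and ${\cal R}$ back to $\hat{S^{\cdot}}{\cal T}_{X'/S}\otimes_{S^{\cdot}{\cal T}_{X'/S}}\tilde{{\cal D}}^{(m)}_{X/S}$ and ${\cal B}^{(m+1)}_{X/S}\otimes_{{\cal O}_{X'}}\hat{S^{\cdot}}{\cal T}_{X'/S}$ then yields the asserted equivalence, and its dependence on the log strong lifting is inherited from that of the isomorphism in Theorem~\ref{t8}, which justifies the notation $C_{\tilde F}$. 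Once the local progenerator property is in hand, everything after it is formal Morita theory.
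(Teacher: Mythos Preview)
Your proposal is correct and follows essentially the same approach as the paper: invoke the splitting isomorphism of Theorem~\ref{t8} and then apply Schepler's indexed Morita equivalence (Proposition~\ref{l}). The paper's proof is a single line citing Proposition~\ref{l}; your verification that ${\cal E}$ is locally free of finite rank over ${\cal R}$ via the base change ${\cal E}\cong{\cal A}^{gp}_{X}\otimes_{{\cal B}^{(m+1)}_{X/S}}{\cal R}$ and Theorem~\ref{e} is exactly the check that the paper already records inside the proof of Theorem~\ref{t8}, so nothing is genuinely different.
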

We can write the ${\cal D}_{X/S}^{(m)}$-action on $C_{\tilde{F}}^{-1}({\cal E})$ explicitly (Theorem \ref{t12}).
This is a good point for working with a log strong lifting $\tilde{F}$.
As a natural question of the study of ${\cal D}_{X/S}^{(m)}$-modules in characteristic $p$, 
we also prove the compatibility of the log Frobenius descent and the log local Cartier transform (Theorem \ref{t3}).
This is done by studying the behavior of the splitting module with respect to the log Frobenius descent.
Finally, we clarify the relation between the global version of a splitting module constructed in our previous paper \cite{O} and the local version of a splitting module constructed in this paper.
We construct a global version of a splitting module by glueing our local construction and show that it is isomorphic to our previous splitting module.
As a consequence, we know that the log global Cartier transform of higher level can be reconstructed by a glueing of the log Local Cartier transform of higher level. 

\subsection{Overview}
The content of each section is as follows:
In the second section, we recall several notions and terminologies which we often use in this paper.
We also recall important results in our previous paper \cite{O} which are used in later sections.
In the third section, we construct the divided Frobenius $\Psi: F_{X/S}^{*}\Gamma_{.} {\Omega}^{1}_{X'/S}\to {\cal P}_{X/S, (m)}$ mentioned in the subsection \ref{ss1} 
and give an explicit description of it.
In the fourth section, we prove the main result of this paper called the log local Cartier transform of higher level
and give an explicit description of it.
In the fifth section, we discuss the compatibility of the log local Cartier transform and the log Frobenius descent.
In the final section, we construct the global version of a splitting module by the glueing argument and show that this is coincide with a splitting module constructed in \cite{O}.

\subsection{Conventions}
Throughout this paper, we fix a prime number $p$ and a natural number $m$.
We use the following notations on multi-indices.
The element $(0, . . . , 1, . . . , 0)\in {\mathbb N}^{r}$, where 1 sits in the $i$-th entry, is denoted by $\underline{\varepsilon}_{i}$.
When $\underline{k}$ is an element of ${\mathbb N}^{r}$, we denote its $i$-th entry by $k_{i}$, $k_{1}+\cdots +k_{r}$ by $|\underline{k}|$.
We denote a log scheme by a single letter such as $X$.
For a log scheme $X$, we denote the structure sheaf of $X$ by ${\cal O}_{X}$ and the log structure of $X$ by ${\cal M}_{X}$.

\section{Review}\label{s1}
In this section, after recalling basic notations on log differential operators of higher level and indexed algebras associated to log structure, we recall some results in our previous paper \cite{O} needed later.

\subsection{Log differential operators of higher level}\label{a}
In this subsection, we recall basic definitions and notations on log differential operators of higher level introduced by Montagnon.
For more details, see \cite{M}.
We also refer the reader to the subsection 3.1 of \cite{O}.

Let us first recall the definition of $m$-PD structures [B1, D\'efinition 1.3.1].

\begin{defi}
Let $X$ be a log scheme over ${\mathbb Z}_{(p)}$, $I$ a quasi coherent ideal of ${\cal O}_{X}$.  
Then $m$-PD structure on $I$ is a PD ideal $(J, \gamma)$ such that $I^{(p^{m})}+pI\subset J\subset I$ and $\gamma$ is compatible with the standard PD structure on $p{\mathbb Z}_{(p)}$.  
Here $I^{(p^{m})}$ denotes the subideal of $I$ generated by $p^{m}$-th powers of local sanctions of $I$.
\end{defi}
In the case $m=0$, the definition of $m$-PD structures coincides with that of classical PD structures. 
Let $(J, \gamma)$ be an $m$-PD structure on $I$ . For each natural number $k$, we define by $f^{\{k\}}:=f^{r}\gamma_{q}(f^{p^{m}})$, where $k=p^{m}q+r$ and $0\leq r< p^{m}$.

For a while, $X\to S$ denotes a log smooth morphism of fine log schemes over ${\mathbb Z}_{(p)}$ with a suitable $m$-PD structure on a quasi-coherent ideal of ${\cal O}_{S}$ (see [B1, D\'efinition 1.3.2]). 
We assume that $p$ is locally nilpotent on $X$.
Let us take the log $m$-PD envelope $X\hookrightarrow P_{X/S, (m)}$ of the diagonal immersion $X\hookrightarrow X\times_{S}X$, that is,  
$X\hookrightarrow P_{X/S, (m)}$ is the universal object of exact closed immersions with an $m$-PD structure on its defining ideal, which is compatible with the $m$-PD structure on $S$, over $X\hookrightarrow X\times_{S}X$ (for more details, see [M, Proposition 2.1.1]). 
We denote by ${\cal P}_{X/S, (m)}$ the structure sheaf of $P_{X/S, (m)}$.
We also denote by $\bar{I}$ the defining ideal of the exact closed immersion $X\hookrightarrow P_{X/S, (m)}$.
Note that, by definition of log $m$-PD envelope, $\bar{I}$ is endowed with the $m$-PD structure,
so one can take the $m$-PD-adic filtration (for its definition, see \cite{B2} D\'efinition A.3) $\left\{\bar{I}^{\{n\}}\right\}_{n\in {\mathbb N}}$ associated to $\bar{I}$.
For a natural number $n$, we put ${\cal P}^{n}_{X/S, (m)}:={\cal P}_{X/S, (m)}/\bar{I}^{\{n+1\}}$ and ${\cal D}_{X/S, n}^{(m)} :={\cal H}om_{{\cal O}_{X}}({{\cal P}}_{X/S, (m)}^{n}, {\cal O}_{X})$. 
Then we define the sheaf ${\cal D}_{X/S}^{(m)}$ of log differential operators of level $m$ by
\begin{equation*}
{\cal D}_{X/S}^{(m)}:=\bigcup_{n\in \mathbb{N}} {\cal D}_{X/S, n}^{(m)}.
\end{equation*} 
${\cal D}_{X/S}^{(m)}$ forms a sheaf of ${\cal O}_{X}$-algebra via the comultiplication $\delta^{n, n'}: {\cal P}^{n+n'}_{X/S, (m)}\to {\cal P}^{n}_{X/S, (m)}\otimes_{{\cal O}_{X}}{\cal P}^{n'}_{X/S, (m)}$
naturally induced from the projection $X\times_{S}X\times_{S}X\to X\times_{S}X$ to the first and the third factors.

Finally we recall a local description of ${\cal D}_{X/S}^{(m)}$.
Let us denote by $p_{0}$ (resp. $p_{1}$) the first (resp. the second) projection of $P_{X/S, (m)}$.
For any section $a\in {\cal M}_{X}$, there exists the unique section $\mu(a)\in (1+\bar{I})$ such that $p_{1}^{*}(a)=p_{0}^{*}(a)\cdot \mu(a)$. 
We define the section $\eta_{a}\in \bar{I}$ by $\mu(a)-1$.
Log smoothness of $X\to S$ implies that, \'etale locally on $X$, there is a logarithmic system of coordinates $m_{1}, \ldots, m_{r}\in {\cal M}_{X}^{gp}$, that is, a system of sections such that the set $\left\{d\log m_{1},\ldots, d\log m_{r}\right\}$ forms a basis of the log differential module $\Omega_{X/S}^{1}$ of $X$ over $S$. 
We define the section $\eta^{\{\underline{k}\}}$ by $\eta^{\{\underline{k}\}}=\prod_{i:=1}^{r}\eta_{m_{i}}^{\{k_{i}\}}$ for each multi-index $\underline{k}\in {\mathbb N}^{r}$.
Then the set $\left\{ \underline{\eta}^{\{\underline{k}\}} \bigl| |\underline{k}|\leq n\right\}$ forms a local basis of ${\cal P}_{X/S, (m)}^{n}$ over ${\cal O}_{X}$.
We denote the dual basis of $\bigl\{ \underline{\eta}^{\{\underline{k}\}} \bigl| |\underline{k}|\leq n\bigr\}$ by $\bigl\{ \partial_{\langle \underline{k} \rangle} \bigl| |\underline{k}|\leq n\bigr\}$.


\subsection{Indexed algebras associated to log structure}\label{c2}
In this subsection, after recalling basics on indexed modules, we introduce two indexed algebras ${\cal A}_{X}^{gp}$, ${\cal B}^{(m+1)}_{X/S}$ associated to log structures introduced by Lorenzon and Montagnon.
For more details, see \cite{L}, \cite{S} and \cite{M}.
See also the section 2 of \cite{O} for basics on indexed modules and the subsection 4.1 of \cite{O} for basics on ${\cal A}_{X}^{gp}$ and ${\cal B}^{(m+1)}_{X/S}$. 

Let $X$ be a scheme and $\cal I$ a sheaf of abelian groups on $X$. 
Let $a$ be the addition ${\cal I \times I\to I}$ of $\cal I$. 
\begin{defi}
{\rm (1)} an $\cal I$-indexed ${\cal O}_{X}$-module is a sheaf $\cal F$ over $\cal I$ with an addition $\cal F \times_{I}F\to F$ over $\cal I$, a unit $\cal I\to F$ over $\cal I$, an inverse $\cal F\to F$ over $\cal I$ and 
an ${\cal O}_{X}$-action ${\cal O}_{X}\times \cal F\to F$ over $\cal I$ satisfying the usual axiom of ${\cal O}_{X}$-modules. 

{\rm (2)} an $\cal I$-indexed ${\cal O}_{X}$-algebra is an $\cal I$-indexed ${\cal O}_{X}$-module $\cal A$ with an
${\cal O}_{X}$-bilinear multiplication $\cal A \times A \to A$ over $a$, a global section $1_{\cal A}\in {\cal A}$ over the zero section $0 : e\to \cal I$ satisfying the usual axiom of non commutative rings. 
The notion of commutative $\cal I$-indexed ${\cal O}_{X}$-algebra is also defined in a natural way.
\end{defi}
Let $\cal A$ be an $\cal I$-indexed ${\cal O}_{X}$-algebra. 
Let $\cal J$ be a sheaf of $\cal I$-sets on $X$ and $\rho$ the action $\cal I\times J\to J$ of $\cal I$ on $\cal J$. 
\begin{defi}
$\cal J$-indexed $\cal A$-module is a  
$\cal J$-indexed ${\cal O}_{X}$-module $\cal E$ with an ${\cal O}_{X}$-linear ${\cal A}$-action ${\cal A\times E\to E}$ over $\rho$ satisfying the usual axiom of modules over a ring.
\end{defi}

As is the same with the theory of usual modules over a ring, we can define and construct several notions on indexed modules such as tensor products, internal homomorphisms, local freeness etc.
Furthermore, Schepler proved the indexed version of the Morita equivalence. 
\begin{pro}\label{l}
Let $\cal A$ be a commutative $\cal I$-indexed ${\cal O}_{X}$-algebra and $\cal J$ a sheaf of $\cal I$-sets, 
Let $M$ be a locally free $\cal I$-indexed $\cal A$-module of finite rank. 
We put ${\cal E}nd_{\cal A}(M):=\cal E$ (this naturally forms an $\cal I$-indexed ${\cal O}_{X}$-algebras). 
Then the functor $E \longmapsto M\otimes_{{\cal A}} E$ defines an equivalence of categories between the category of $\cal J$-indexed $\cal A$-modules and the category of $\cal J$-indexed left $\cal E$-modules with a quasi inverse 
$F\longmapsto {\cal H}om(M, F)$. 
\end{pro}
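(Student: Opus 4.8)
The plan is to transcribe the classical proof of the Morita equivalence into the indexed setting; the only genuinely new content is the bookkeeping of the ${\cal I}$- and ${\cal J}$-gradings, since tensor products, internal homomorphisms, duals and local freeness of indexed modules obey the same formal rules as in the ungraded case. The two functors are $E\mapsto M\otimes_{\cal A}E$ and $F\mapsto {\cal H}om_{\cal E}(M,F)$, the latter internal hom being taken in the category of ${\cal J}$-indexed left $\cal E$-modules. Since $\cal A$ maps to the centre of $\cal E$ via $a\mapsto (m\mapsto am)$, every left $\cal E$-module is in particular an $\cal A$-module; this is what makes ${\cal H}om_{\cal E}(M,F)$ an $\cal A$-module, and one checks directly that $M\otimes_{\cal A}E$ is a left $\cal E$-module and that both constructions respect the ${\cal J}$-indexing through the action $\rho\colon {\cal I}\times{\cal J}\to{\cal J}$.

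First I would introduce the unit and the counit of the adjunction. For a ${\cal J}$-indexed left $\cal E$-module $F$, set $\varepsilon_{F}\colon M\otimes_{\cal A}{\cal H}om_{\cal E}(M,F)\to F$, $m\otimes\psi\mapsto\psi(m)$; this is well defined over $\cal A$ because $\cal A$ is central, and it is $\cal E$-linear because each $\psi$ is. For a ${\cal J}$-indexed $\cal A$-module $E$, set $\eta_{E}\colon E\to {\cal H}om_{\cal E}(M,M\otimes_{\cal A}E)$, $x\mapsto(m\mapsto m\otimes x)$; the inner map is $\cal E$-linear since $e\cdot(m\otimes x)=(em)\otimes x$, and $\eta_{E}$ is $\cal A$-linear. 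One verifies that $\varepsilon$ and $\eta$ are natural and compatible with the gradings, so that it remains only to prove that they are isomorphisms.

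Next I would check that $\varepsilon_{F}$ and $\eta_{E}$ are isomorphisms. Both statements are local on $X$, so using the hypothesis that $M$ is locally free of finite rank I may replace $M$ by $\bigoplus_{\alpha=1}^{n}{\cal A}(i_{\alpha})$, a finite direct sum of copies of $\cal A$ shifted by elements $i_{\alpha}\in{\cal I}$. In this model $\cal E$ is the indexed algebra of $n\times n$ matrices over $\cal A$ with the appropriate index shifts, $M$ is its standard column module, and the statements reduce to the classical Morita equivalence between $\cal A$-modules and modules over a matrix algebra; concretely the evaluation pairing already hits $1_{\cal A}$, which supplies the surjectivity (generator property) underlying the isomorphy of $\varepsilon_{F}$. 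Because $\varepsilon$ and $\eta$ are defined without reference to any trivialization, the local isomorphisms glue to global ones, and the two functors are mutually quasi-inverse equivalences.

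The main obstacle is not the algebra, which is formally identical to the ungraded case, but the careful handling of the indexed structures: one must ensure that the internal hom ${\cal H}om_{\cal E}(M,-)$ of ${\cal J}$-indexed left $\cal E$-modules is well defined and lands in ${\cal J}$-indexed $\cal A$-modules, that the tensor product $M\otimes_{\cal A}(-)$ transports the ${\cal I}$-grading of $M$ and the ${\cal J}$-grading of its argument correctly along $\rho$, and that the adjunction isomorphisms respect these gradings degree by degree. The delicate point is the reduction to the local model: one needs the precise meaning of ``locally free of finite rank'' for ${\cal I}$-indexed $\cal A$-modules, namely a local decomposition into shifted copies of $\cal A$, together with the fact that the formation of ${\cal E}nd_{\cal A}(M)$, of the dual, and of the evaluation and trace maps is compatible with such a decomposition and is canonical enough to glue.
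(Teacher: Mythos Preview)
Your argument is correct and is the standard Morita-equivalence proof carried over to the indexed setting. The paper itself gives no argument at all: its entire proof is the single line ``See [S, Theorem 2.2]'', deferring to Schepler's thesis. What you have written is therefore not a different route so much as a supply of the details the paper outsources; the steps you outline---constructing the unit $\eta$ and counit $\varepsilon$, reducing locally to the case $M\cong\bigoplus_{\alpha}{\cal A}(i_{\alpha})$, identifying ${\cal E}$ with a shifted matrix algebra, and glueing---are exactly what one expects the cited result to contain. Your caveats about checking that internal homs, tensor products, and duals behave in the ${\cal I}$/${\cal J}$-graded context are appropriate and are the only places where the indexed version requires care beyond the classical proof.
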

\begin{proof}
See [S, Theorem 2.2].
\end{proof}
Let us define the notion of Azumaya algebra.
\begin{defi}
Let $\cal A$ be a commutative $\cal I$-indexed ${\cal O}_{X}$-algebra and $\cal E$ an $\cal I$-indexed $\cal A$-algebra.
$\cal E$ is an Azumaya algebra over $\cal A$ of rank $r$ if there exists some commutative faithfully flat  $\cal I$-indexed $\cal A$-algebra $\cal B$ and
an $\cal I$-indexed locally free $\cal B$-module $M$ of rank $r$ such that 
\begin{equation*}
{\cal E\otimes_{A}B }\xrightarrow{\cong}{\cal E}nd_{\cal B}(M).
\end{equation*}
Let $\cal E$ be an Azumaya algebra over $\cal A$.
When there exists some commutative $\cal I$-indexed $\cal A$-algebra $\cal C$ such that ${\cal E\otimes_{A}C }\xrightarrow{\cong}{\cal E}nd_{\cal C}(M)$
for some $\cal I$-indexed locally free $\cal C$-module $M$,
we call $M$ a splitting module of $\cal E$ over $\cal C$.
\end{defi}

Let us introduce ${\cal I}_{X}^{gp}$-indexed ${\cal O}_{X}$-algebra ${\cal A}_{X}^{gp}$ which can be considered as a generalization of the structure sheaf of $X$ in some sense. 
Let $X$ be a fine log scheme. 
We denote by ${\cal I}_{X}^{gp}$ the quotient sheaf ${\cal M}_{X}^{gp}/{\cal O}_{X}^{*}$ of abelian groups.
Here ${\cal M}_{X}^{gp}$ denotes the group associated to ${\cal M}_{X}$.  
As an \'etale sheaf, we define ${\cal A}_{X}^{gp}:=({\cal M}_{X}^{gp}\times{\cal O}_{X})/\sim$, where  $\sim$ denotes an equivalence relation defined by $(ax, y)\sim (x, ay)$ for any  $a\in {\cal O}^{*}_{X}$, $x\in {\cal M}_{X}^{gp}$ and  $y\in {\cal O}_{X}$. 
We then regard ${\cal A}_{X}^{gp}$ as an \'etale sheaf over ${\cal I}_{X}^{gp}$ via the natural morphism induced by a composition ${\cal M}_{X}^{gp}\times {\cal O}_{X}\to {\cal M}_{X}^{gp}\to {\cal I}_{X}^{gp}$, where the first morphism is the first projection and the second one is the natural projection.
Furthermore,  ${\cal A}_{X}^{gp}$ naturally forms an ${\cal I}_{X}^{gp}$-indexed ${\cal O}_{X}$-algebra via the ${\cal O}_{X}$-action on the second entry and a multiplication induced from the addition of ${\cal M}_{X}^{gp}$.

Let $X\to S$ be a log smooth morphism of fine log schemes over ${\mathbb Z}_{(p)}$ with a suitable $m$-PD structure on a quasi-coherent ideal of ${\cal O}_{S}$. 
One can prove the ${\cal O}_{X}$-action on ${\cal A}_{X}^{gp}$ naturally extend to the ${\cal D}^{(m)}_{X/S}$-action and this action satisfies the Leibniz type formulas. 
Therefore the scalar extension
\begin{equation*}
\tilde{{\cal D}}^{(m)}_{X/S}:={\cal A}^{gp}_{X}\otimes_{{\cal O}_{X}}{\cal D}^{(m)}_{X/S}
\end{equation*}
naturally forms an ${\cal I}^{gp}_{X}$-indexed ${\cal A}^{gp}_{X}$-algebra. 
From the rest of this paper, we will study the Azumaya nature of $\tilde{{\cal D}}^{(m)}_{X/S}$ instead of ${\cal D}^{(m)}_{X/S}$.

From now on, let $X\to S$ be a log smooth morphism of fine log schemes defined over ${\mathbb Z}/p{\mathbb Z}$ and we endow
$S$ with the $m$-PD structure on the zero ideal. 
Let us consider the following commutative diagram: 

\[\xymatrix{
{X}\ar[r] \ar@/^1pc/[rrr]^{F_{X}}\ar@/_/[rrd] & {X'}\ar[r]&\ar@{}[rd]|{\square} {X''} \ar[d] \ar[r] & X \ar[d] \\
& & S \ar[r]^{F_{S}} & S ,}
\]

where $F_{X}$ (resp. $F_{S}$) denotes the $(m+1)$-st composition of the absolute Frobenius of $X$ (resp. $S$), the right square is cartesian and the above diagram $X\to X'\to X''$
denotes a unique factorization by a purely inseparable morphism $X\to X'$ and a log \'etale morpsiam $X'\to X''$ (see [K, Proposition 4.10 (2)]).
We denote $X\to X'$ by $F_{X/S}$ and call it the $(m+1)$-st relative Frobenius of $X\to S$. 
We denote the composition $X'\to X''\to X$ by $\pi$. 
We  denote $X'$ by $X^{(m+1)}$, if there is a risk of confusion (especially we will use this notation in section \ref{s5}). 
As an ${\cal I}_{X}^{gp}$-indexed sheaf of abelian groups, we define ${\cal B}^{(m+1)}_{X/S}$ by
\begin{equation*}
{\cal B}_{X/S}^{(m+1)}:= {\cal H}om_{{\cal D}^{(m+1)}_{X/S}}\left(F_{X/S}^{*}{\cal D}_{X'/S}^{(0)}, {\cal A}_{X}^{gp}\right).
\end{equation*}
For the definition of left ${\cal D}^{(m+1)}_{X/S}$-action on $F_{X/S}^{*}{\cal D}_{X'/S}^{(0)}$, see [M, Chapitre 3] or the subsection 4.1.2 of \cite{O}. 
If we define a morphism $i:{\cal B}_{X/S}^{(m+1)}\rightarrow {\cal A}_{X}^{gp}$ by $g\mapsto g(1\otimes1)$, 
then this is injective and one can check that ${\cal B}_{X/S}^{(m+1)}$ forms an indexed subalgebra of ${\cal A}_{X}^{gp}$. 
Furthermore we define ${\cal O}_{X'}$-action on ${\cal B}_{X/S}^{(m+1)}$ by the right multiplication of ${\cal O}_{X'}$ on $F^{*}{\cal D}_{X'/S}^{(0)}$.
Then ${\cal B}_{X/S}^{(m+1)}$ forms an ${\cal I}_{X}^{gp}$-indexed ${\cal O}_{X'}$-algebra. 
The following theorem proved by Montagnon is important. 
\begin{theo}\label{e}
${\cal A}_{X}^{gp}$ is a locally free ${\cal I}_{X}^{gp}$-indexed ${\cal B}_{X/S}^{(m+1)}$-module of rank $p^{r(m+1)}$. 
\end{theo}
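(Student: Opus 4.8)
Since being locally free of a given rank is an \'etale-local property, the plan is to reduce the assertion to an explicit computation in coordinates. \'Etale-locally on $X$ we may fix a logarithmic system of coordinates $m_{1},\ldots ,m_{r}\in {\cal M}_{X}^{gp}$ of $X\to S$. Then ${\cal I}_{X}^{gp}$ is the constant sheaf ${\mathbb Z}^{r}$ generated by the classes of $m_{1},\ldots ,m_{r}$, and, writing $\omega^{\underline{k}}$ for the class of $(m_{1}^{k_{1}}\cdots m_{r}^{k_{r}},1)$ in ${\cal A}_{X}^{gp}$, the algebra ${\cal A}_{X}^{gp}$ is the ${\mathbb Z}^{r}$-graded ${\cal O}_{X}$-algebra $\bigoplus_{\underline{k}\in{\mathbb Z}^{r}}{\cal O}_{X}\,\omega^{\underline{k}}$ with $\omega^{\underline{k}}\omega^{\underline{l}}=\omega^{\underline{k}+\underline{l}}$ and $\omega^{\underline{0}}=1$. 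By the Leibniz-type formulas recalled in the subsection \ref{c2}, the ${\cal D}_{X/S}^{(m+1)}$-action on ${\cal A}_{X}^{gp}$ extending the one on ${\cal O}_{X}$ carries $\omega^{\underline{k}}$ to a scalar multiple of itself under each operator $\partial_{\langle\underline{n}\rangle}$, the scalar being a product of $m$-PD binomial coefficients; by Kummer's theorem on binomial coefficients modulo $p$, this scalar vanishes for every $\underline{n}\neq\underline{0}$ with all entries $<p^{m+1}$ as soon as $p^{m+1}$ divides each $k_{\nu}$ (equivalently, $\omega^{p^{m+1}\underline{\ell}}=(\omega^{\underline{\ell}})^{p^{m+1}}$ behaves as a $p^{m+1}$-th power for this action).

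Next I would describe ${\cal B}_{X/S}^{(m+1)}$ in these coordinates. By the logarithmic Frobenius descent of Montagnon (\cite{M}; see also the subsection 4.1.2 of \cite{O}), $F_{X/S}^{*}{\cal D}_{X'/S}^{(0)}$ is generated as a left ${\cal D}_{X/S}^{(m+1)}$-module by $1\otimes1$, and the annihilator of $1\otimes1$ is the left ideal generated by the operators $\partial_{\langle n\underline{\varepsilon}_{\nu}\rangle}$ with $1\le\nu\le r$ and $0<n<p^{m+1}$. Hence the embedding $i\colon{\cal B}_{X/S}^{(m+1)}\hookrightarrow{\cal A}_{X}^{gp}$, $g\mapsto g(1\otimes1)$, identifies ${\cal B}_{X/S}^{(m+1)}$ with the ${\cal I}_{X}^{gp}$-indexed subalgebra $\bigcap_{\nu,\,0<n<p^{m+1}}\ker\bigl(\partial_{\langle n\underline{\varepsilon}_{\nu}\rangle}\colon{\cal A}_{X}^{gp}\to{\cal A}_{X}^{gp}\bigr)$. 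The first paragraph already shows that $\omega^{p^{m+1}\underline{\ell}}\in{\cal B}_{X/S}^{(m+1)}$ for all $\underline{\ell}\in{\mathbb Z}^{r}$ and that the weight-$\underline{0}$ component of ${\cal B}_{X/S}^{(m+1)}$ is the level-$(m+1)$ Cartier-type descent $\bigcap_{\nu,n}\ker(\partial_{\langle n\underline{\varepsilon}_{\nu}\rangle}|_{{\cal O}_{X}})={\cal O}_{X'}$. The substantial input, carried out in \cite{M} by an explicit computation with the sections $\eta^{\{\underline{k}\}}$ defining the $m$-PD envelope, is that for every $\underline{k}\in{\mathbb Z}^{r}$ the graded piece $({\cal B}_{X/S}^{(m+1)})_{\underline{k}}:={\cal B}_{X/S}^{(m+1)}\cap{\cal O}_{X}\omega^{\underline{k}}$ is an invertible ${\cal O}_{X'}$-module, and that local generators $\beta_{\underline{k}}$ of these pieces can be chosen so that, for each fixed $\underline{k}$, the $p^{r(m+1)}$ sections $\beta_{\underline{k}-\underline{i}}\,\omega^{\underline{i}}$ with $\underline{i}$ ranging over $J:=\{\underline{i}\in{\mathbb N}^{r}\mid 0\le i_{\nu}<p^{m+1}\}$ form an ${\cal O}_{X'}$-basis of ${\cal O}_{X}\omega^{\underline{k}}$. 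I expect this to be the main obstacle: it is exactly here that the logarithmic case departs from the classical one, because $F_{X/S}$ need not be finite flat and ${\cal O}_{X}$ need not be locally free over ${\cal O}_{X'}$ with an obvious monomial basis; what makes the statement nonetheless true is that ${\cal B}_{X/S}^{(m+1)}$ has nonzero graded pieces in \emph{all} weights of ${\mathbb Z}^{r}$, not only in $p^{m+1}{\mathbb Z}^{r}$, and these pieces reorganize ${\cal O}_{X}$ into a free ${\cal O}_{X'}$-module of rank $p^{r(m+1)}=\#J$.

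Granting this local description, the theorem follows formally. Let $\Phi$ be the morphism of ${\cal I}_{X}^{gp}$-indexed ${\cal B}_{X/S}^{(m+1)}$-modules from the free ${\cal B}_{X/S}^{(m+1)}$-module on generators $e_{\underline{i}}$ of weight $\underline{i}$ (for $\underline{i}\in J$) to ${\cal A}_{X}^{gp}$ determined by $e_{\underline{i}}\mapsto\omega^{\underline{i}}$. Both source and target are ${\mathbb Z}^{r}$-graded and $\Phi$ respects the grading, so it suffices to check that $\Phi$ is an isomorphism in each weight $\underline{k}\in{\mathbb Z}^{r}$. In that weight $\Phi$ is the map $\bigoplus_{\underline{i}\in J}({\cal B}_{X/S}^{(m+1)})_{\underline{k}-\underline{i}}\cdot\omega^{\underline{i}}\to({\cal A}_{X}^{gp})_{\underline{k}}={\cal O}_{X}\omega^{\underline{k}}$; writing $({\cal B}_{X/S}^{(m+1)})_{\underline{k}-\underline{i}}={\cal O}_{X'}\beta_{\underline{k}-\underline{i}}$, this is the ${\cal O}_{X'}$-linear map $\bigoplus_{\underline{i}\in J}{\cal O}_{X'}\to{\cal O}_{X}\omega^{\underline{k}}$ sending the $\underline{i}$-th standard generator to $\beta_{\underline{k}-\underline{i}}\,\omega^{\underline{i}}$, which is an isomorphism by the last assertion of the preceding paragraph. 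Therefore $\Phi$ is an isomorphism, and ${\cal A}_{X}^{gp}$ is a locally free ${\cal I}_{X}^{gp}$-indexed ${\cal B}_{X/S}^{(m+1)}$-module of rank $p^{r(m+1)}$, as claimed.
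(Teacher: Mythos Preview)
The paper does not give a proof here; it simply cites Montagnon \cite[Corollaire 4.2.1]{M}. Your proposal is therefore an attempted sketch of Montagnon's argument rather than a comparison target, and as a sketch it contains a genuine error in the local setup.

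The problematic claim is that, after choosing a logarithmic system of coordinates $m_{1},\ldots,m_{r}\in{\cal M}_{X}^{gp}$, the sheaf ${\cal I}_{X}^{gp}={\cal M}_{X}^{gp}/{\cal O}_{X}^{*}$ becomes the constant sheaf ${\mathbb Z}^{r}$ generated by the classes of the $m_{i}$. This is false in general. The defining property of a logarithmic system of coordinates is that $d\log m_{1},\ldots,d\log m_{r}$ form a basis of the \emph{relative} sheaf $\Omega^{1}_{X/S}$; this says nothing about ${\cal M}_{X}^{gp}/{\cal O}_{X}^{*}$. For instance, if $X\to S$ is classically smooth with trivial log structures then $r>0$ but ${\cal I}_{X}^{gp}=0$; conversely, sections of ${\cal M}_{X}$ pulled back from ${\cal M}_{S}$ contribute to ${\cal I}_{X}^{gp}$ but map to zero in $\Omega^{1}_{X/S}$. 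Hence your identification of ${\cal A}_{X}^{gp}$ with the ${\mathbb Z}^{r}$-graded algebra $\bigoplus_{\underline{k}\in{\mathbb Z}^{r}}{\cal O}_{X}\,\omega^{\underline{k}}$ is not valid, and the weight-by-weight verification in your third paragraph does not go through as written: the indexing sheaf is simply not ${\mathbb Z}^{r}$, so there is no map $\Phi$ of the shape you describe.

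Beyond this, you yourself flag that the ``substantial input'' --- that each graded piece of ${\cal B}_{X/S}^{(m+1)}$ is invertible over ${\cal O}_{X'}$ and that suitable $\beta_{\underline{k}-\underline{i}}\,\omega^{\underline{i}}$ form a basis --- is taken from \cite{M} without reproduction. So even modulo the error above, the proposal does not supply an argument independent of the reference the paper already cites. Montagnon's actual proof works with the indexed formalism directly and does not pass through an identification ${\cal I}_{X}^{gp}\cong{\mathbb Z}^{r}$; if you want to expand the citation into a proof, that is the framework you would need to follow.
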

\begin{proof}
See [M, Corollaire 4.2.1].
\end{proof}

\subsection{Azumaya algebra property}\label{j}

In this subsection, we recall the Azumaya nature of $\tilde {{\cal D}}_{X/S}^{(m)}$. 
For more details, see the sections 3 and 4 of \cite{O}. 
Let $X\to S$ be as in the end of the subsection \ref{c2}.

First we recall the $p^{m+1}$-curvature map which generalizes the classical $p$-curvature map.
\begin{theo} \label{t2}
There exists a unique morphism of ${\cal O}_{X}$-algebras $\beta: S^{\cdot} {\cal T}_{X'/S}\to {\cal D}^{(m)}_{X/S}$ which sends $\xi^{'}_{i}$ to $\underline{\partial}_{\langle p^{m+1}\underline{\varepsilon}_{i}\rangle}$,
where $\left\{\xi_{i}^{'}\bigl| 1\leq i\leq r\right\}$ denotes the dual basis of  $\left\{\pi^{*}d\log m_{i}\bigl| 1\leq i\leq r\right\}$ with a logarithmic system of coordinates $\left\{m_{i}| 1\leq i\leq r\right\}$ of $X\to S$.
\end{theo}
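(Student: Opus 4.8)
The plan is to build $\beta$ \'etale locally from a logarithmic system of coordinates, to check the resulting prescription extends to a ring homomorphism, and then to glue. On an \'etale neighbourhood carrying logarithmic coordinates $m_{1},\dots,m_{r}$ of $X\to S$, the sheaf ${\cal T}_{X'/S}$ is free on the basis $\{\xi^{'}_{i}\}$ dual to $\{\pi^{*}d\log m_{i}\}$, and I would define an ${\cal O}_{X'}$-linear map ${\cal T}_{X'/S}\to{\cal D}^{(m)}_{X/S}$ by $\xi^{'}_{i}\mapsto\underline{\partial}_{\langle p^{m+1}\underline{\varepsilon}_{i}\rangle}$, where ${\cal O}_{X'}$ acts on ${\cal D}^{(m)}_{X/S}$ through $F^{*}_{X/S}\colon{\cal O}_{X'}\to{\cal O}_{X}$ and left multiplication. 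Since $S^{\cdot}{\cal T}_{X'/S}$ is the free commutative ${\cal O}_{X'}$-algebra on the module ${\cal T}_{X'/S}$, this map extends to a morphism of ${\cal O}_{X}$-algebras $\beta$ exactly when the sub-${\cal O}_{X}$-module of ${\cal D}^{(m)}_{X/S}$ generated by the $\underline{\partial}_{\langle p^{m+1}\underline{\varepsilon}_{i}\rangle}$ and by $F^{*}_{X/S}{\cal O}_{X'}$ is a commutative subring; concretely one needs (i) $[\,\underline{\partial}_{\langle p^{m+1}\underline{\varepsilon}_{i}\rangle},\,F^{*}_{X/S}(a)\,]=0$ in ${\cal D}^{(m)}_{X/S}$ for every local section $a$ of ${\cal O}_{X'}$, and (ii) $[\,\underline{\partial}_{\langle p^{m+1}\underline{\varepsilon}_{i}\rangle},\,\underline{\partial}_{\langle p^{m+1}\underline{\varepsilon}_{j}\rangle}\,]=0$ for all $i,j$. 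Uniqueness is then immediate: $S^{\cdot}{\cal T}_{X'/S}$ is generated over ${\cal O}_{X'}$ by the $\xi^{'}_{i}$, so $\beta$ is forced on generators, and two morphisms of ${\cal O}_{X}$-algebras that agree locally agree globally.

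\textbf{The two commutation checks.} For (ii) I would appeal to the local structure of ${\cal D}^{(m)}_{X/S}$ recalled in \S\ref{a}: the product of the dual basis elements $\underline{\partial}_{\langle\underline{k}\rangle}$ is controlled by Montagnon's multiplication formula, whose one-variable shape reads $\partial_{\langle k\rangle}\,\partial_{\langle l\rangle}=\binom{k+l}{k}_{(m)}\,\partial_{\langle k+l\rangle}$ with a modified ($m$-PD) binomial coefficient $\binom{k+l}{k}_{(m)}$ symmetric in $k$ and $l$, and under which operators along distinct coordinate directions commute; hence $\underline{\partial}_{\langle p^{m+1}\underline{\varepsilon}_{i}\rangle}\,\underline{\partial}_{\langle p^{m+1}\underline{\varepsilon}_{j}\rangle}$ is visibly symmetric in $i$ and $j$. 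For (i), which is the higher-level logarithmic incarnation of the ${\cal O}_{X'}$-linearity of the classical $p$-curvature, I would expand the commutator using Montagnon's Leibniz formula $\underline{\partial}_{\langle n\rangle}(fg)=\sum_{a+b=n}\binom{n}{a}_{(m)}\,\underline{\partial}_{\langle a\rangle}(f)\,\underline{\partial}_{\langle b\rangle}(g)$; since, \'etale locally, $F^{*}_{X/S}{\cal O}_{X'}$ is generated by $p^{m+1}$-st powers of local functions together with the contribution of the log \'etale part $X'\to X''$ (which contributes nothing to log differentials), this reduces to two characteristic-$p$ facts about the $m$-PD envelope: the vanishing mod $p$ of the coefficients $\binom{n}{a}_{(m)}$ occurring for $0<a<p^{m+1}$, and the fact that $\underline{\partial}_{\langle p^{m+1}\underline{\varepsilon}_{i}\rangle}$ annihilates $p^{m+1}$-st powers. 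This is also where the exponent $p^{m+1}$ is forced: for $j\le m$ the operator $\underline{\partial}_{\langle p^{j}\underline{\varepsilon}_{i}\rangle}$ involves only divided powers $\gamma_{q}$ with $q<p$, and it is only at $j=m+1$ that a genuine ``division by $p$'' --- hence a Frobenius descent of coefficients --- enters.

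\textbf{Gluing and the main obstacle.} To patch the local $\beta$'s I would check their independence of the chosen logarithmic coordinates: passing to a second system $\{m'_{i}\}$, related to $\{m_{i}\}$ by a matrix in $GL_{r}({\cal O}_{X})$ acting on the $d\log m_{i}$, one verifies that the $\underline{\partial}'_{\langle p^{m+1}\underline{\varepsilon}_{i}\rangle}$ are expressed in terms of the $\underline{\partial}_{\langle p^{m+1}\underline{\varepsilon}_{j}\rangle}$ by the same linear change of basis, but with the transition functions replaced by their $p^{m+1}$-st powers --- which is exactly how the change of dual basis on ${\cal T}_{X'/S}$ is read through $F^{*}_{X/S}$. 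This again follows from the explicit change-of-coordinate behaviour of the $\underline{\eta}^{\{\underline{k}\}}$ and their duals together with the vanishing used in (i); the local $\beta$'s then agree on overlaps and glue to the desired global morphism of ${\cal O}_{X}$-algebras with $\beta(\xi^{'}_{i})=\underline{\partial}_{\langle p^{m+1}\underline{\varepsilon}_{i}\rangle}$. The genuinely technical point --- and the main obstacle --- is the combinatorial input of (i) and of the coordinate change: both rest on non-trivial mod-$p$ identities for the $m$-PD binomial and multinomial coefficients governing the Leibniz and transition formulas of the $\underline{\partial}_{\langle\underline{k}\rangle}$. A more conceptual route that would make the gluing automatic is to characterise the $p^{m+1}$-curvature functorially, as the obstruction to Frobenius descent along $F_{X/S}$, i.e.\ as a natural Higgs-type field on ${\cal D}^{(m)}_{X/S}$-modules, and then to specialise it to ${\cal D}^{(m)}_{X/S}$ equipped with its left regular action; but matching the resulting operator with $\underline{\partial}_{\langle p^{m+1}\underline{\varepsilon}_{i}\rangle}$ still requires Montagnon's local formulas, so the combinatorial core cannot be bypassed.
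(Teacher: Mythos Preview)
The paper does not actually prove this statement here: it appears in the review section, and the ``proof'' consists solely of citations to the author's earlier paper, namely [O, Definition~3.10] for the construction of $\beta$ and [O, Proposition~3.11] for its local description. The shape of those references (a Definition giving the map, followed by a Proposition computing it in coordinates) indicates that [O] follows precisely the ``more conceptual route'' you mention at the end: $\beta$ is first built intrinsically --- essentially as the dual of a Frobenius-type morphism on $m$-PD envelopes, packaging the $p^{m+1}$-curvature --- so that well-definedness and independence of coordinates are automatic, and the formula $\beta(\xi'_{i})=\underline{\partial}_{\langle p^{m+1}\underline{\varepsilon}_{i}\rangle}$ is then a local computation.

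Your main approach reverses the order: define $\beta$ in coordinates and glue. This is a legitimate alternative, but it trades one local computation for three separate verifications (the two commutator checks and the coordinate-change compatibility), each resting on the mod-$p$ combinatorics of $m$-PD binomial coefficients that you only indicate. One spot in your sketch that would need tightening in the logarithmic setting is step~(i): because $F_{X/S}\colon X\to X'$ is the purely inseparable factor of $X\to X''$ rather than the naive relative Frobenius, the description of $F_{X/S}^{*}{\cal O}_{X'}$ as ``$p^{m+1}$-st powers plus a log-\'etale contribution'' is imprecise; making the Leibniz argument go through requires working with an explicit log chart, at which point the computation is essentially the same as the one carried out in [O, Proposition~3.11]. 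So your outline is sound, but the intrinsic construction in [O] is cleaner precisely because it sidesteps the gluing step you flag as the main obstacle.
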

\begin{proof}
For the construction of the $p^{m+1}$-curvature map, see [O, Definition 3.10] and its local description, see [O, Proposition 3.11].
\end{proof}
In the case of the trivial log structure and $m=0$, the $p^{m+1}$-curvature map coincides with the classical $p$-curvature map. 

\begin{theo} \label{t1}
{\rm (1)}
Let $\tilde{\mathfrak Z}$ denote the center of $\tilde {{\cal D}}_{X/S}^{(m)}$.
Then $\beta: {S^{\cdot} {\cal T}_{X'/S}}\to {\cal D}_{X/S}^{(m)}$ induces an isomorphism between ${\cal B}_{X/S}^{(m+1)}\otimes_{{\cal O}_{X'}}S^{\cdot} {\cal T}_{X'/S}$ and $\tilde{\mathfrak Z}$ as an indexed subalgebra of $\tilde {{\cal D}}_{X/S}^{(m)}$.

{\rm (2)}
The ${\cal I}_{X}^{gp}$-indexed ${\cal O}_{X}$-algebra $\tilde{{\cal D}}_{X/S}^{(m)}$ is an Azumaya algebra over its center $\tilde{\mathfrak Z}$ of rank $p^{r(m+1)}$.
\end{theo}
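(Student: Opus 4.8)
The plan is to deduce both statements from the results already recalled, reducing everything to a local computation in a logarithmic coordinate system and then invoking Theorem \ref{e} to produce the faithfully flat base change that exhibits the Azumaya property. First I would fix, étale locally on $X$, a logarithmic system of coordinates $m_{1},\ldots,m_{r}$, with associated dual basis $\{\partial_{\langle\underline{k}\rangle}\}$ of ${\cal D}^{(m)}_{X/S}$ and the induced basis $\{\xi'_{i}\}$ of ${\cal T}_{X'/S}$. The key structural input is the local description of the $p^{m+1}$-curvature map $\beta$ from Theorem \ref{t2}: $\beta(\xi'_{i})=\partial_{\langle p^{m+1}\underline{\varepsilon}_{i}\rangle}$. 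One checks directly from the Leibniz-type formulas (recalled in the construction of $\tilde{{\cal D}}^{(m)}_{X/S}$) that each $\partial_{\langle p^{m+1}\underline{\varepsilon}_{i}\rangle}$ annihilates the ${\cal O}_{X}$-action on ${\cal A}^{gp}_{X}$ and commutes with the ${\cal D}^{(m)}_{X/S}$-action; hence $\beta$ lands in the center, and its ${\cal B}^{(m+1)}_{X/S}$-linear extension gives a map ${\cal B}^{(m+1)}_{X/S}\otimes_{{\cal O}_{X'}}S^{\cdot}{\cal T}_{X'/S}\to\tilde{\mathfrak Z}$.

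For (1), the remaining point is that this map is an isomorphism onto the full center. Injectivity follows because, locally, $\tilde{{\cal D}}^{(m)}_{X/S}$ is free over ${\cal A}^{gp}_{X}$ on the monomials $\prod_i\partial_{\langle k_i\underline{\varepsilon}_i\rangle}$ and the elements $\beta(\xi'^{\underline{n}})=\prod_i\partial_{\langle p^{m+1}n_i\underline{\varepsilon}_i\rangle}$ are part of an ${\cal A}^{gp}_{X}$-basis (using the $m$-PD divided-power multiplication rules $\partial_{\langle a\underline{\varepsilon}_i\rangle}\partial_{\langle b\underline{\varepsilon}_i\rangle}=\binom{a+b}{a}\partial_{\langle(a+b)\underline{\varepsilon}_i\rangle}$ up to unit $m$-PD coefficients, which are invertible when one index is divisible by $p^{m+1}$), and the ${\cal B}^{(m+1)}_{X/S}$-module ${\cal A}^{gp}_{X}$ is faithfully flat by Theorem \ref{e}. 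For surjectivity I would argue that a central element must commute with every $\partial_{\langle\underline{\varepsilon}_i\rangle}$ and with every section of ${\cal A}^{gp}_{X}$; writing it in the monomial basis, the commutation relations force all monomials $\prod_i\partial_{\langle k_i\underline{\varepsilon}_i\rangle}$ with some $k_i\not\equiv 0\pmod{p^{m+1}}$ to have coefficient $0$ and the surviving coefficients to lie in ${\cal B}^{(m+1)}_{X/S}$, exactly as in the non-logarithmic computation of Gros--Le Stum--Quirós adapted via the indexed formalism. This is the computational heart of the argument.

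For (2), I would use ${\cal B}:={\cal A}^{gp}_{X}$ itself as the faithfully flat $\tilde{\mathfrak Z}$-algebra (it is faithfully flat over ${\cal B}^{(m+1)}_{X/S}$ by Theorem \ref{e}, hence over $\tilde{\mathfrak Z}={\cal B}^{(m+1)}_{X/S}\otimes_{{\cal O}_{X'}}S^{\cdot}{\cal T}_{X'/S}$ after the further base change by $S^{\cdot}{\cal T}_{X'/S}\to\,?$ — more precisely one first base-changes $\tilde{\mathfrak Z}$ along ${\cal B}^{(m+1)}_{X/S}\to{\cal A}^{gp}_{X}$), and then exhibit an isomorphism
\begin{equation*}
\tilde{{\cal D}}^{(m)}_{X/S}\otimes_{\tilde{\mathfrak Z}}{\cal A}^{gp}_{X}\xrightarrow{\ \cong\ }{\cal E}nd_{{\cal A}^{gp}_{X}\otimes_{{\cal O}_{X'}}S^{\cdot}{\cal T}_{X'/S}}\bigl({\cal A}^{gp}_{X}\otimes_{{\cal O}_{X}}F^{*}_{X/S}S^{\cdot}{\cal T}_{X'/S}\bigr),
\end{equation*}
by checking it locally: after this base change $\tilde{{\cal D}}^{(m)}_{X/S}$ becomes free of rank $p^{r(m+1)}$ over the center and the action on the indicated module is faithful, so a rank count (both sides have rank $p^{2r(m+1)}$ over the base) together with faithfulness of the action forces the natural map to be an isomorphism — this is the standard trick that a faithful module over an algebra of the right dimension yields an Azumaya splitting. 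The main obstacle, as flagged, is the surjectivity part of (1): making the commutator computation rigorous in the ${\cal I}^{gp}_{X}$-indexed setting, where one must track the grading by ${\cal M}^{gp}_{X}/{\cal O}^{*}_{X}$ carefully and ensure the divided-power binomial coefficients behave as expected; this was essentially carried out in \cite{O} and I would cite it where possible rather than redo it.
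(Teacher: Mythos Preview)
Your plan for part (1) is correct and is essentially the argument carried out in \cite{O}; the paper itself does not reprove the theorem here (it sits in the Review section) and simply cites [O, Theorem 4.16] and [O, Corollary 4.20], so in that sense you are already doing more than the paper does, and your concluding remark about citing \cite{O} for the commutator computation is exactly what the author does.

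For part (2), however, there is a genuine gap in your rank count. After base-changing $\tilde{\mathfrak Z}$ along ${\cal B}^{(m+1)}_{X/S}\hookrightarrow{\cal A}^{gp}_{X}$ to $B:={\cal A}^{gp}_{X}\otimes_{{\cal O}_{X'}}S^{\cdot}{\cal T}_{X'/S}$, your proposed splitting module
\[
M={\cal A}^{gp}_{X}\otimes_{{\cal O}_{X}}F^{*}_{X/S}S^{\cdot}{\cal T}_{X'/S}\;=\;{\cal A}^{gp}_{X}\otimes_{{\cal O}_{X'}}S^{\cdot}{\cal T}_{X'/S}\;=\;B
\]
has rank $1$ over $B$, not $p^{r(m+1)}$, so ${\cal E}nd_{B}(M)$ has rank $1$ and cannot receive $\tilde{{\cal D}}^{(m)}_{X/S}\otimes_{\tilde{\mathfrak Z}}B$. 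You appear to have imported the module from Theorem \ref{t8}, but there the point is precisely that one stays over the \emph{smaller} base ${\cal B}^{(m+1)}_{X/S}\otimes_{{\cal O}_{X'}}\hat{S^{\cdot}}{\cal T}_{X'/S}$ (so that ${\cal A}^{gp}_{X}$ contributes the factor $p^{r(m+1)}$ via Theorem \ref{e}) and that the $\tilde{{\cal D}}^{(m)}_{X/S}$-action and the $\hat{S^{\cdot}}{\cal T}_{X'/S}$-action on $M$ are twisted through the divided Frobenius $\Psi$, which is not yet available at this point of the paper and in fact relies on Theorem \ref{t1}(2). The argument in \cite{O} instead takes $\tilde{{\cal D}}^{(m)}_{X/S}$ itself as the bimodule (left and right regular actions) and checks locally that the induced map $\tilde{{\cal D}}^{(m)}_{X/S}\otimes_{\tilde{\mathfrak Z}}(\tilde{{\cal D}}^{(m)}_{X/S})^{\mathrm{op}}\to{\cal E}nd_{\tilde{\mathfrak Z}}(\tilde{{\cal D}}^{(m)}_{X/S})$ is an isomorphism; equivalently, one exhibits $\tilde{{\cal D}}^{(m)}_{X/S}$ as locally free of rank $p^{r(m+1)}$ over $B$ and lets the right action give the splitting after base change. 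Either way, replace your $M$ by $\tilde{{\cal D}}^{(m)}_{X/S}$ and the rank count goes through.
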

\begin{proof}
See [O, Theorem 4.16]  for a proof of $(1)$ and [O, Corollary 4.20]  for a proof of $(2)$.
\end{proof}

\section{The divided Frobenius map}

In this section, we construct the divided Frobenius map $\Psi: F_{X/S}^{*}\Gamma_{.} {\Omega}^{1}_{X'/S}\to {\cal P}_{X/S, (m)}$ (see (\ref{d6})) which
is essential for a construction of a splitting module of our Azumaya algebra.
Throughout this section, $X\to S$ denotes an integral log smooth morphism of fine log schemes defined over ${\mathbb Z}/p{\mathbb Z}$ and
and we endow $S$ with the $m$-PD structure on the zero ideal. 
To simplify the argument, we assume that the underlying scheme $S$ is noetherian and $X\to S$ is of finite type.
We essentially use this assumption in the construction of $\Psi$ but we expect that we can construct $\Psi$ without this assumption.
We freely use terminologies introduced in the section \ref{s1}.

We first confirm the definition of a lifting modulo $p^{2}$ in this paper.
\begin{defi}
Let $f:Y\to Z$ be a morphism of fine log schemes defined over ${\mathbb Z}/p{\mathbb Z}$.
Then a lifting of $f$ modulo $p^{2}$ is a morphism $\tilde{f}: \tilde{Y}\to \tilde{Z}$ of fine log schemes flat over ${\mathbb Z}/p^{2}{\mathbb Z}$ which fits into a cartesian square in the category of fine log schemes
	\begin{equation*}
		\begin{CD}
			Y @>>>\tilde{Y}\\
			@VVV @VVV\\
			Z @>>> \tilde{Z},\\
		\end{CD}
	\end{equation*}
where $Z\to \tilde{Z}$ is the exact closed immersion defined by $p$.
\end{defi}
We state the following lemmas needed later.
\begin{lemm}\label{l3}
Let $M$ be a ${\mathbb Z}/p^{2}{\mathbb Z}$-module.
Then multiplication by $p!$ induces a surjective homomorphism
\begin{equation*}
p!: M/pM \to pM
\end{equation*}
which is an isomorphism if $M$ is flat over ${\mathbb Z}/p^{2}{\mathbb Z}$.
\end{lemm}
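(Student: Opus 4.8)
The plan is to first treat the general case using the short exact sequence $0 \to p{\mathbb Z}/p^2{\mathbb Z} \to {\mathbb Z}/p^2{\mathbb Z} \to {\mathbb Z}/p{\mathbb Z} \to 0$, and then strengthen to an isomorphism under the flatness hypothesis. Note first that for any ${\mathbb Z}/p^2{\mathbb Z}$-module $M$, multiplication by $p$ annihilates $pM$ (since $p^2 = 0$ in the ring) and also annihilates $M/pM$, so $pM$ and $M/pM$ are both ${\mathbb F}_p$-modules, i.e. ${\mathbb F}_p$-vector spaces; hence multiplication by $p!$ makes sense on $M/pM$ because $p!$ is divisible by $p$ exactly once (as $p$ is prime), and composing further with $p$ kills everything, so the map $\overline{x} \mapsto p! \cdot x$ is a well-defined additive map $M/pM \to pM$. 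Actually it is cleaner to observe $p! = p \cdot (p-1)!$ with $(p-1)!$ a unit modulo $p$, so the map $p! \colon M/pM \to pM$ differs from the map $p \colon M/pM \to pM$ (which is $\overline{x} \mapsto px$, well-defined since $p(M) \subset pM$ and $p(pM) = 0$) by the automorphism given by the unit $(p-1)!$; thus it suffices to prove the statement for multiplication by $p$ itself.

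First I would prove surjectivity. Given $y \in pM$, write $y = px$ for some $x \in M$; then $y$ is the image of $\overline{x} \in M/pM$ under multiplication by $p$. This uses nothing beyond the definition of $pM$. So $p \colon M/pM \to pM$ is surjective, and composing with the automorphism $(p-1)!$ gives the surjectivity of $p! \colon M/pM \to pM$.

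Next, assume $M$ is flat over ${\mathbb Z}/p^2{\mathbb Z}$; I would show injectivity. Tensoring the exact sequence $0 \to p{\mathbb Z}/p^2{\mathbb Z} \xrightarrow{\iota} {\mathbb Z}/p^2{\mathbb Z} \to {\mathbb Z}/p{\mathbb Z} \to 0$ with $M$ over ${\mathbb Z}/p^2{\mathbb Z}$ and using flatness gives a short exact sequence $0 \to (p{\mathbb Z}/p^2{\mathbb Z})\otimes_{{\mathbb Z}/p^2{\mathbb Z}} M \to M \to M/pM \to 0$. The image of the left-hand term in $M$ is exactly $pM$, so $(p{\mathbb Z}/p^2{\mathbb Z})\otimes_{{\mathbb Z}/p^2{\mathbb Z}} M \xrightarrow{\cong} pM$. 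On the other hand, the ${\mathbb Z}/p^2{\mathbb Z}$-module $p{\mathbb Z}/p^2{\mathbb Z}$ is isomorphic to ${\mathbb Z}/p{\mathbb Z}$ via the map sending the generator $p$ to $1$, and under this identification the inclusion $\iota$ becomes multiplication by $p$; hence $(p{\mathbb Z}/p^2{\mathbb Z})\otimes_{{\mathbb Z}/p^2{\mathbb Z}} M \cong {\mathbb Z}/p{\mathbb Z} \otimes_{{\mathbb Z}/p^2{\mathbb Z}} M = M/pM$, and chasing through the identifications shows that the composite $M/pM \xrightarrow{\cong} (p{\mathbb Z}/p^2{\mathbb Z})\otimes_{{\mathbb Z}/p^2{\mathbb Z}} M \xrightarrow{\cong} pM$ is precisely multiplication by $p$. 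Therefore $p \colon M/pM \to pM$ is an isomorphism, and composing with the automorphism induced by the unit $(p-1)!$ shows $p! \colon M/pM \to pM$ is an isomorphism as well. The only mild subtlety, and the one point to be careful about, is the bookkeeping identifying the connecting map of the tensored sequence with honest multiplication by $p$ on $M/pM$ rather than by $p$ up to a sign or unit; everything else is formal.
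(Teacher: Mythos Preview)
Your argument is correct. The paper itself omits the proof of this lemma entirely (writing only ``We omit the proof''), so there is no approach to compare against; your reduction to multiplication by $p$ via the unit $(p-1)!$ and the use of flatness on the short exact sequence $0 \to p{\mathbb Z}/p^{2}{\mathbb Z} \to {\mathbb Z}/p^{2}{\mathbb Z} \to {\mathbb Z}/p{\mathbb Z} \to 0$ is the standard and expected verification.
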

\begin{proof}
We omit the proof.
\end{proof}
\begin{lemm}\label{l4}
We have for any $m>0$
\begin{equation*}
\binom{p^{m+1}}{i}=\left\{
		\begin{array}{c}
		\text{$1$} \\
		\text{$\frac{(-1)^{k}p!}{k}$}\\
		\text{$0$ }
		\end{array}
		\right.\,\,\,\,\,\,
		\begin{array}{l}
		\text{if $i=0$ or $i=p^{m+1}$}\\
		\text{if $i=kp^{m}$}\\
		\text{othewise}
		\end{array}
		\,\,\,\,\,\,
		\text{mod } p^{2}
\end{equation*}

\end{lemm}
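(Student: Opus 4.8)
Proof plan for Lemma 1.5 (the binomial congruence $\binom{p^{m+1}}{i} \pmod{p^2}$).

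The plan is to compute the $p$-adic valuation of $\binom{p^{m+1}}{i}$ via Kummer's theorem (or equivalently Legendre's formula) and then pin down the residue modulo $p^2$ in the only nontrivial case, namely $i = kp^m$ with $p\nmid k$. First I would dispose of the boundary cases: $\binom{p^{m+1}}{0} = \binom{p^{m+1}}{p^{m+1}} = 1$ is immediate. For the generic case, recall that $v_p\!\left(\binom{p^{m+1}}{i}\right)$ equals the number of carries when adding $i$ and $p^{m+1}-i$ in base $p$; since the sum is $p^{m+1} = (1,0,\dots,0)_p$, writing $i$ in base $p$ with digits $i = \sum_{j=0}^{m} a_j p^j$ (all $a_j$ forced to lie in $\{0,\dots,p-1\}$ because $0 < i < p^{m+1}$), the number of carries is $m+1$ minus the index of the lowest nonzero digit of $i$. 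Hence $v_p\!\left(\binom{p^{m+1}}{i}\right) \geq 2$ — which already gives $\binom{p^{m+1}}{i} \equiv 0 \pmod{p^2}$ — unless the lowest nonzero base-$p$ digit of $i$ sits in position $\geq m-1$; concretely this forces $i = kp^m$ (lowest nonzero digit in position $m$, one carry, valuation $1$) or $i = kp^{m-1}$ with $p \nmid k$ (lowest nonzero digit in position $m-1$, two carries, valuation $2$, so still $\equiv 0$). So the only residues I need to chase are $i = kp^m$, $1 \le k \le p-1$.

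For $i = kp^m$ I would use the identity $\binom{p^{m+1}}{kp^m} = \frac{p^{m+1}}{kp^m}\binom{p^{m+1}-1}{kp^m - 1} = \frac{p}{k}\binom{p^{m+1}-1}{kp^m-1}$ and then evaluate $\binom{p^{m+1}-1}{kp^m-1}$ modulo $p$. By Lucas' theorem applied to $p^{m+1}-1 = \sum_{j=0}^{m}(p-1)p^j$ and $kp^m - 1 = (k-1)p^m + \sum_{j=0}^{m-1}(p-1)p^j$, one gets $\binom{p^{m+1}-1}{kp^m-1} \equiv \binom{p-1}{k-1}\prod_{j=0}^{m-1}\binom{p-1}{p-1} = \binom{p-1}{k-1} \equiv (-1)^{k-1} \pmod p$, using the standard congruence $\binom{p-1}{k-1} \equiv (-1)^{k-1}$. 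Therefore $\binom{p^{m+1}}{kp^m} = \frac{p}{k}\binom{p^{m+1}-1}{kp^m-1}$, and since this is $p$ times an integer congruent to $\frac{(-1)^{k-1}}{k}$ times a unit mod $p$, I can rewrite it as $\frac{p\cdot(-1)^{k-1}u}{k}$ and then replace $p$ by $p!/(p-1)!$; using $(p-1)! \equiv -1 \pmod p$ (Wilson) this becomes $\frac{(-1)^k p!}{k}$ modulo $p^2$ after checking the unit bookkeeping carefully. The slightly delicate point is that $p!/k$ is not literally an integer when $p \nmid k$, so the statement must be read as: $\binom{p^{m+1}}{kp^m}$ is the unique element of $p\mathbb{Z}/p^2\mathbb{Z}$ that equals $p!$ times $(-1)^k k^{-1}$, where $k^{-1}$ is taken in $\mathbb{Z}/p\mathbb{Z}$ — equivalently $\binom{p^{m+1}}{kp^m} \equiv -\frac{(p-1)!\,p}{k} \cdot (-1)^{k-1} \pmod{p^2}$ with all divisions legitimate once we clear $p$.

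The main obstacle is precisely this last congruence: matching the computed value $\frac{p}{k}\cdot(-1)^{k-1}$ (an element of $p\mathbb{Z}_{(p)}$, well-defined mod $p^2$) with the claimed normal form $\frac{(-1)^k p!}{k}$ requires invoking Wilson's theorem to identify $(p-1)!$ with $-1$ modulo $p$ and being careful that the symbol $\frac{p!}{k}$ is to be interpreted inside $p\mathbb{Z}/p^2\mathbb{Z} \cong \mathbb{Z}/p\mathbb{Z}$ (via Lemma \ref{l3}, which is why that lemma is stated just before). Everything else — the valuation count and the Lucas computation — is routine. I would present the valuation argument first to kill the ``otherwise'' case and the $i = kp^{m-1}$ sub-case, then do the Lucas/Wilson computation for $i = kp^m$.
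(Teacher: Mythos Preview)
Your argument is correct. The paper itself does not prove this lemma --- it simply cites [GLQ, Lemma 4.3] (with a remark about a misprint there) --- so there is no in-paper proof to compare against. Your approach via Kummer's theorem to dispose of the ``otherwise'' case and Lucas/Wilson for the residue at $i=kp^m$ is standard and sound.

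One small correction: contrary to what you write, $p!/k$ \emph{is} a literal integer for $1\le k\le p-1$, since $k$ appears among the factors $1\cdot 2\cdots p$ of $p!$. No reinterpretation via Lemma~\ref{l3} is needed, and the claimed congruence $\binom{p^{m+1}}{kp^m}\equiv \frac{(-1)^k p!}{k}\pmod{p^2}$ is a statement about honest integers. Your computed value $\frac{(-1)^{k-1}p}{k}$ (read in $\mathbb Z_{(p)}/p^2\mathbb Z_{(p)}$, since $p/k$ itself is not an integer for $k>1$) matches it directly via Wilson's theorem: $(p-1)!\equiv -1\pmod p$ gives $p!\equiv -p\pmod{p^2}$, hence $\frac{(-1)^k p!}{k}\equiv \frac{(-1)^{k-1}p}{k}\pmod{p^2}$. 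So the ``unit bookkeeping'' you flag as delicate is in fact a one-line consequence of Wilson, with no ambiguity in the target expression.
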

\begin{proof}
See [GLQ, Lemma 4.3].
\end{proof}
\begin{rem}
It seems that there is a misprint in [GLQ, Lemma 4.3].
The author would like to thank Kazuaki Miyatani for pointing him out this misprint and teaching him its correct version.
\end{rem}

Next, let us introduce the notion of log strong lifting of the $(m+1)$-st relative Frobenius of $X\to S$ which is the log version of strong lifting defined in \cite{GLQ} (see [GLQ, Definition 4.4]).
\begin{defi}\label{d1}
Let $\tilde{F}: \tilde{X}\to \tilde{X'}$ be a lifting of $F_{X/S}$ mod $p^{2}$.
We say $\tilde{F}$ is a log strong lifting if, for any $m\in {\cal M}_{X}$ with a lifting $\tilde{m}\in {\cal M}_{\tilde{X}}$ of $m$ and any lifting $\tilde{m'}\in {\cal M}_{\tilde{X'}}$ of $\pi^{*}m\in {\cal M}_{X'}$,
there exists $\tilde{g}\in {\cal O}_{\tilde{X}}$ such that
\begin{equation}\label{e3}
\tilde{F}^{*}(\tilde{m'})=\tilde{m}^{p^{m+1}}(1+p\tilde{g}^{p^{m}}).
\end{equation}    
\end{defi}
\begin{rem}
In the case $m=0$, any lifting of $F_{X/S}$ modulo $p^{2}$ is a log strong lifting. 
\end{rem}
We give a basic property of log strong lifting.

\begin{lemm}\label{r1}
Let $\tilde{F}$ be a log strong lifting.
For any $f\in {\cal O}_{X}$ with a lifting $\tilde{f}\in {\cal O}_{\tilde{X}}$ of $f$, there exists a lifting $\tilde{f'}\in {\cal O}_{\tilde{X'}}$ of $1\otimes f\in {\cal O}_{X'}$
satisfying
\begin{equation}\label{e4}
\tilde{F}^{*}(\tilde{f'})=\tilde{f}^{p^{m+1}} +p\tilde{g}^{p^{m}}\text{\,\,\,\,\,\,\,\,\,\,\,\,for some $\tilde{g}\in {\cal O}_{\tilde{X}}$}.
\end{equation}    
\end{lemm}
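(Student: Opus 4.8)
The plan is to reduce the statement about functions to the statement about monoid elements in Definition \ref{d1}, using that log smoothness gives us enough charts to express functions multiplicatively. First I would reduce to the case where $f$ is a unit: a general $f\in{\cal O}_X$ can be written \'etale locally as a difference $f = u - v$ of two units (for instance $u$ and $u - f$ for a suitable chosen unit $u$), and since the desired identity \eqref{e4} is \emph{not} additive, this naive splitting does not immediately work. So instead I would work directly with the chart structure: \'etale locally $X\to S$ admits a chart, and the log structure ${\cal M}_X$ is generated by ${\cal O}_X^*$ and the image of the chart monoid; moreover functions coming from the ambient ${\cal O}_S$ already satisfy $\tilde F^*$ acting as $p^{m+1}$-st power up to the lifted Frobenius on $S$, so the new content concerns functions built from the chart.

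The key steps, in order: (1) Fix $f\in{\cal O}_X$ with lift $\tilde f\in{\cal O}_{\tilde X}$. The element $1\otimes f\in{\cal O}_{X'}$ is the image of $f$ under the map ${\cal O}_X\to {\cal O}_{X'}$ coming from $\pi$; I need to produce a specific lift $\tilde{f'}$. (2) Observe that the image $\alpha_X(m)\in{\cal O}_X$ of any $m\in{\cal M}_X$ under the structure map $\alpha_X:{\cal M}_X\to{\cal O}_X$ has, by Definition \ref{d1}, the property that for compatible lifts $\tilde m,\tilde m'$ one gets $\tilde F^*(\alpha_{\tilde X'}(\tilde m')) = \alpha_{\tilde X}(\tilde m)^{p^{m+1}}(1+p\tilde g^{p^m}) = \alpha_{\tilde X}(\tilde m)^{p^{m+1}} + p\,(\alpha_{\tilde X}(\tilde m)\tilde g)^{p^m}$, which is exactly the desired shape \eqref{e4} for $f=\alpha_X(m)$. (3) For a unit $f\in{\cal O}_X^*$: a unit lifts to a unit, and $f$ viewed in ${\cal M}_X$ (via ${\cal O}_X^*\hookrightarrow{\cal M}_X$) has $\pi^*f\in{\cal O}_{X'}^*\subset{\cal M}_{X'}$; applying the previous step, one checks $\tilde F^*$ is compatible with the absolute-Frobenius-type structure on units, so \eqref{e4} holds with $\tilde g$ possibly zero or a unit times a power — here one uses that $\tilde F$ is a lifting of the $(m+1)$-st relative Frobenius, so modulo $p$ it raises units to the $p^{m+1}$-st power, hence $\tilde F^*(\tilde{f'}) - \tilde f^{p^{m+1}}\in p{\cal O}_{\tilde X}$, and Lemma \ref{l3} together with the fact that on ${\mathbb Z}/p{\mathbb Z}$-algebras $p{\cal O}_{\tilde X}\cong{\cal O}_X$ via $p!$ (up to the unit $(p-1)!$) lets us write the error term as $p$ times something, which we then need to identify as a $p^m$-th power. (4) Finally, handle general $f$: write $f$ \'etale locally in terms of chart generators and units using log smoothness, and propagate the identity through sums and products — here one uses that if $\tilde F^*(\tilde{a'}) = \tilde a^{p^{m+1}}+p\tilde g_a^{p^m}$ and similarly for $b$, then for the product $\tilde F^*(\tilde a'\tilde b') = (\tilde a\tilde b)^{p^{m+1}} + p(\text{stuff})$, and the ``stuff'' is $\tilde a^{p^{m+1}}\tilde g_b^{p^m} + \tilde b^{p^{m+1}}\tilde g_a^{p^m} + p\tilde g_a^{p^m}\tilde g_b^{p^m}$; since we work mod $p^2$ the last term dies, and $\tilde a^{p^{m+1}} = (\tilde a^p)^{p^m}$, so the whole thing is $(\tilde a^p\tilde g_b + \tilde b^p\tilde g_a)^{p^m}$ modulo $p$ — wait, that last reduction needs the Frobenius-linearity of $x\mapsto x^{p^m}$ modulo $p$, which is exactly $(x+y)^{p^m}\equiv x^{p^m}+y^{p^m}\bmod p$; so it goes through.

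The main obstacle I expect is step (4), propagating the special form \eqref{e4} through \emph{sums}: the identity $\tilde F^*(\tilde{f'}) = \tilde f^{p^{m+1}} + p\tilde g^{p^m}$ is visibly multiplicative-friendly but sums require that $(\tilde a+\tilde b)^{p^{m+1}}$ expand correctly, and the binomial coefficients $\binom{p^{m+1}}{i}$ are \emph{not} all divisible by $p^2$ — by Lemma \ref{l4} the ones with $i=kp^m$ contribute $\frac{(-1)^k p!}{k}$. So $(\tilde a+\tilde b)^{p^{m+1}} \equiv \tilde a^{p^{m+1}} + \tilde b^{p^{m+1}} + \sum_{k=1}^{p-1}\frac{(-1)^k p!}{k}\tilde a^{(p-k)p^m}\tilde b^{kp^m}\bmod p^2$, and the correction sum is exactly $p$ times $\sum_k \frac{(-1)^k (p-1)!}{k}(\tilde a^{p-k}\tilde b^k)^{p^m}$, which modulo $p$ is a $p^m$-th power of a single element (again by Frobenius-additivity mod $p$ of the $p^m$-th power map on the ${\mathbb Z}/p{\mathbb Z}$-algebra ${\cal O}_X$). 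Combining this with the cross terms from $p\tilde g_a^{p^m}$ and $p\tilde g_b^{p^m}$ (which survive since they are already multiplied by $p$) and collecting everything as $p\cdot(\text{a }p^m\text{-th power mod }p)$ is the delicate bookkeeping; once that is done, the reduction to the monoid case via the chart finishes the proof. I would therefore structure the argument so that the sum-compatibility lemma is isolated and proved once, then applied to a local presentation of $f$.
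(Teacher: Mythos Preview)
Your plan is far more elaborate than needed, and the step you flag as the heart of the argument --- ``write $f$ \'etale locally in terms of chart generators and units using log smoothness'' --- is a genuine gap: even after passing to a chart, ${\cal O}_X$ is not generated as an ${\cal O}_S$-algebra by images of monoid elements (there may be nontrivial \'etale structure), so propagating \eqref{e4} through sums and products of chart elements does not reach every $f$. Your product and sum compatibility computations are essentially correct, but they are not what is required.

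The paper's argument is a one-line trick that you almost wrote down and then abandoned. Work Zariski-locally: at any point either $f$ is a unit or $1+f$ is a unit. If $f\in{\cal O}_X^*$, then $f\in{\cal M}_X$ via ${\cal O}_X^*\hookrightarrow{\cal M}_X$ and Definition~\ref{d1} applies directly (your step~(2)/(3) is exactly this case, and it is indeed obvious). If $f\notin{\cal O}_X^*$, apply the unit case to $1+f$: you get a lift $\tilde{f'}$ of $1\otimes(1+f)$ with $\tilde{F}^{*}(\tilde{f'})=(1+\tilde{f})^{p^{m+1}}+p\tilde{g}^{p^{m}}$; then $\tilde{f'}-1$ lifts $1\otimes f$, and expanding $(1+\tilde{f})^{p^{m+1}}-1$ via Lemma~\ref{l4} gives $\tilde{f}^{p^{m+1}}$ plus $p$ times a sum of $p^m$-th powers, which (using $(x+y)^{p^m}\equiv x^{p^m}+y^{p^m}$ mod $p$, exactly as in your ``main obstacle'' paragraph) collapses to $p\tilde{h}^{p^m}$ for a single $\tilde{h}$. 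So the only sum you ever need is the special case $f=(1+f)-1$, not a general additivity lemma; your discarded ``$u$ and $u-f$'' idea, with $u=1+f$, \emph{is} the proof.
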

\begin{proof}
We may work locally on $X$.
In the case $f\in {\cal O}^{*}_{X}$, the assertion is obvious.
In the case $f \notin  {\cal O}^{*}_{X}$, $1+f\in {\cal O}^{*}_{X}$.
So we can write $\tilde{F}^{*}(\tilde{f'})=(1+\tilde{f})^{p^{m+1}} +p\tilde{g}^{p^{m}}$ for some lifting $\tilde{f'}\in {\cal O}_{\tilde{X'}}$ of $\left(1\otimes1+1\otimes f\right)$  and $\tilde{g}\in {\cal O}_{\tilde{X}}$.
Then $\tilde{f'}-1$ is a lifting of $1\otimes f$ and, by Lemma \ref{l4}, we have
\begin{equation*}
\tilde{F}^{*}(\tilde{f'}-1)=(1+\tilde{f})^{p^{m+1}} +p\tilde{g}^{p^{m}}-1=\tilde{f}^{p^{m+1}}+p\tilde{f}^{p^{m}}+p\tilde{g}^{p^{m}}=\tilde{f}^{p^{m+1}}+p(\tilde{f}+\tilde{g})^{p^{m}}
\end{equation*}
in modulo $p^{2}$.
This finishes the proof.
\end{proof}

The following lemma gives a typical example of log strong liftings.
\begin{lemm}\label{l1}
Let $\tilde{F}_{X}$ be a lifting of the absolute Frobenius of $X$.
Then, for any $\tilde{m}\in {\cal M}_{\tilde{X}}$, there exists $\tilde{g}\in {\cal O}_{\tilde{X}}$ such that
\begin{equation*}
\tilde{F}_{X}^{n+1*}(\tilde{m})=\tilde{m}^{p^{n+1}}(1+p\tilde{g}^{p^{n}}) \text{\,\,\,\,\,\,\,\,\,\,\,\,for any $n\in {\mathbb N}$}.
\end{equation*}
\end{lemm}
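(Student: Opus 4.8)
The plan is to settle the case $n=0$ first and then bootstrap to all $n$ by iterating $\tilde{F}_X^{*}$, being careful about which terms vanish modulo $p^{2}$. For $n=0$: since $\tilde{F}_X$ lifts the absolute Frobenius of $X$, the map it induces on log structures reduces modulo $p$ to $m\mapsto m^{p}$, so $\tilde{F}_X^{*}(\tilde m)$ and $\tilde m^{p}$ have the same image in $\mathcal{M}_X^{gp}$; hence $\tilde{F}_X^{*}(\tilde m)\cdot\tilde m^{-p}$ lies in $\ker(\mathcal{M}_{\tilde X}^{gp}\to\mathcal{M}_X^{gp})$. Because $\mathcal{M}_X$ is the pullback log structure along the exact closed immersion $X\hookrightarrow\tilde X$ defined by $p$, this kernel is identified, through the structure map, with $1+p\mathcal{O}_{\tilde X}$. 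Thus there is $\tilde g\in\mathcal{O}_{\tilde X}$ with $\tilde{F}_X^{*}(\tilde m)=\tilde m^{p}(1+p\tilde g)$, which is the asserted identity for $n=0$.

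Next I would show by induction on $n$, \emph{with the same $\tilde g$}, that $\tilde{F}_{X}^{n+1*}(\tilde m)=\tilde m^{p^{n+1}}(1+p\tilde g^{p^{n}})$. Granting this for $n$, apply $\tilde{F}_X^{*}$ to both sides. The monomial part gives $\tilde{F}_X^{*}(\tilde m^{p^{n+1}})=(\tilde{F}_X^{*}\tilde m)^{p^{n+1}}=\tilde m^{p^{n+2}}(1+p\tilde g)^{p^{n+1}}$, and $(1+p\tilde g)^{p^{n+1}}=1$ in $\mathcal{O}_{\tilde X}$ since its only possibly nonzero non-constant term is $p^{n+1}\!\cdot p\tilde g=0$ for $n\ge 0$ and every higher term carries $p^{2}$. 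The unit part gives $\tilde{F}_X^{*}(1+p\tilde g^{p^{n}})=1+p\,(\tilde{F}_X^{*}\tilde g)^{p^{n}}$, and since $\tilde{F}_X^{*}\tilde g\equiv\tilde g^{p}\pmod p$, the outer factor $p$ together with $p^{2}=0$ forces $p\,(\tilde{F}_X^{*}\tilde g)^{p^{n}}=p\,\tilde g^{p^{n+1}}$. Multiplying the two pieces yields $\tilde{F}_{X}^{n+2*}(\tilde m)=\tilde m^{p^{n+2}}(1+p\tilde g^{p^{n+1}})$, closing the induction; taking the $\tilde g$ produced in the case $n=0$ then proves the lemma (and supplies the key computation behind the fact that an iterated lift of the absolute Frobenius of $X$ produces a log strong lifting).

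I do not expect a serious obstacle. The single genuine input is the identification $\ker(\mathcal{M}_{\tilde X}^{gp}\to\mathcal{M}_X^{gp})\cong 1+p\mathcal{O}_{\tilde X}$, i.e.\ the standard exact sequence for the group envelope of a pullback log structure along a square-zero thickening; everything else is elementary mod-$p^{2}$ arithmetic. The one place that needs a little care is the repeated use of $p\cdot p=0$ to kill binomial cross-terms, in particular the vanishing of $(1+p\tilde g)^{p^{n+1}}$, which is exactly what makes one and the same $\tilde g$ work for every $n$.
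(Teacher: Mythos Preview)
Your proof is correct and follows essentially the same route as the paper: establish the base case $\tilde F_X^{*}(\tilde m)=\tilde m^{p}(1+p\tilde g)$ from the fact that $\tilde F_X$ reduces to the absolute Frobenius, then induct by applying $\tilde F_X^{*}$ and killing all cross-terms with $p^{2}=0$, keeping the same $\tilde g$ throughout. Your justification of the base case via $\ker(\mathcal M_{\tilde X}^{gp}\to\mathcal M_X^{gp})\cong 1+p\mathcal O_{\tilde X}$ is a bit more explicit than the paper's, and in the induction step you invoke directly that $\tilde F_X^{*}\tilde g\equiv\tilde g^{p}\pmod p$, whereas the paper cites Lemma~\ref{r1} for the same effect; the computations are otherwise identical.
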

\begin{proof}
We show it induction on $n$.
In the case when $n=1$, we have $\tilde{F}_{X}^{*}\left(\tilde{m}\right)=\tilde{m}^{p}\tilde{u}$ with $\tilde{u}\in {\cal O}^{*}_{\tilde{X}}$ satisfying $q^{*}\tilde{u}=1$.
By this, $\tilde{u}=1+p\tilde{g}$ for some $\tilde{g}\in {\cal O}_{\tilde{X}}$.
Hence we have $\tilde{F}_{X}^{*}(\tilde{m})=\tilde{m}^{p}(1+p\tilde{g})$.
Let us assume $\tilde{F}_{X}^{n+1*}(\tilde{m})=\tilde{m}^{p^{n+1}}(1+p\tilde{g}^{p^{n}})$.
Then, by Lemma \ref{r1}, 
\begin{eqnarray*}
\tilde{F}_{X}^{n+2*}(\tilde{m})&=&\tilde{F}_{X}^{*}\left(\tilde{m}^{p^{n+1}}(1+p\tilde{g}^{p^{n}})\right)\\
&=& \left\{\tilde{m}^{p}(1+p\tilde{g})\right\}^{p^{n+1}}\left(1+\tilde{F}_{X}^{*}\left( p\tilde{g}^{p^{n}}\right)\right)\\
&=& \tilde{m}^{p^{n+2}} (1+p\tilde{g})^{p^{n+1}} \left(1+p \left(\tilde{g}^{p}+p\tilde{h} \right)^{p^{n}}\right)\\
&=& \tilde{m}^{p^{n+2}} (1+p\tilde{g}^{p^{n+1}}).
\end{eqnarray*}
We finish the proof.
\end{proof}
\begin{rem} \label{r2}
Since $X\to S$ is log smooth, a lifting $\tilde{F}_{X}$ of the absolute Frobenius of $X$ always exists \'etale locally on $X$.
So, by Lemma \ref{l1}, we can always take a log strong lifting \'etale locally on $X$. 
\end{rem}
In the rest of this section, we fix a log strong lifting $\tilde{F}$.
Now we may start to construct the divided Frobenius $\Psi: F_{X/S}^{*}\Gamma_{.} {\Omega}^{1}_{X'/S}\to {\cal P}_{X/S, (m)}$.
Let $\tilde{X}\hookrightarrow P_{\tilde{X}/\tilde{S}, (m)}$ be the log $m$-PD envelope of the diagonal $\tilde{X}\to {\tilde{X}\times_{\tilde{S}}\tilde{X}}$.
Let us consider the following commutative diagram:
\[\xymatrix{
 {\tilde{X}} \ar[rr]^{\tilde{F}}\ar[d] & & {\tilde{X'}}\ar[d] \\
 {P_{\tilde{X}/\tilde{S}, (m)}} \ar[r] & {\tilde{X}\times_{\tilde{S}}\tilde{X}} \ar[r]^{\tilde{F}\times \tilde{F}} & {\tilde{X'}\times_{\tilde{S}}\tilde{X'}}
  .}\] 
By [Proposition 4.10, K], \'etale locally on $\tilde{X'}$, $\tilde{X'}\hookrightarrow \tilde{X'}\times_{\tilde{S}}\tilde{X'}$ factors as $\tilde{X'}\hookrightarrow Z \to \tilde{X'}\times_{\tilde{S}}\tilde{X'}$.
Here the first map is an exact closed immersion and the second one is log \'etale.
Then, since the defining ideal $\bar{I}$ of $\tilde{X}\hookrightarrow {P_{\tilde{X}/\tilde{S}, (m)}}$ is endowed with the $m$-PD structure, so $\tilde{X}\hookrightarrow P_{\tilde{X}/\tilde{S}, (m)}$ is nilimmersion and
the morphism $P_{\tilde{X}/\tilde{S}, (m)} \to \tilde{X'}\times_{\tilde{S}}\tilde{X'}$ uniquely factors as $P_{\tilde{X}/\tilde{S}, (m)} \to Z \to \tilde{X'}\times_{\tilde{S}}\tilde{X'}$ (cf, [NS, Lemma 2.3.14]).
Since the underlying scheme of $Z$ is locally noetherian and $\tilde{X}\hookrightarrow P_{\tilde{X}/\tilde{S}, (m)}$ is nilimmersion, $P_{\tilde{X}/\tilde{S}, (m)} \to Z$ uniquely factors as $P_{\tilde{X}/\tilde{S}, (m)} \to \tilde{X'}(N) \to Z$ for a sufficiently large number $N$, where $\tilde{X'}(N)$ denotes the $N$-th log infinitesimal neighborhood of $\tilde{X'}\hookrightarrow Z$.
Then, since $\tilde{X'}\hookrightarrow \tilde{X'}(N)$ has the universality, we  obtain the morphism $\tilde{\Psi}_{N}: P_{\tilde{X}/\tilde{S}, (m)} \to \tilde{X'}(N)$ for some large number $N$ globally on $\tilde{X'}$.
Here we use the quasi-compactness of $\tilde{X'}$ to bound such $N$'s.
 
Let us calculate the image of ${\cal I}:=Ker({\cal O}_{\tilde{X'}(N)}\to {\cal O}_{\tilde{X'}})$ under $\tilde{\Psi}^{*}_{N}: {\cal O}_{\tilde{X'}(N)}\to {{\cal P}_{\tilde{X}/\tilde{S}, (m)}}$. 
Let $m$ be a local section of ${\cal M}_{X}$ with a lift $\tilde{m}\in {\cal M}_{\tilde{X}}$.
Let $\tilde{m'}\in {\cal M}_{\tilde{X'}}$ be a lift of $\pi^{*}m$. 
Then, by the definition of log strong lifting, we can write $\tilde{F}^{*}\left(\tilde{m'}\right)=\tilde{m}^{p^{m+1}}\left(1+p\tilde{g}^{p^{m}}\right)$ for some $\tilde{g}\in {\cal O}_{\tilde{X}}$.
On the other hand, there exists a unique section $\mu_{(\infty)}(\tilde{m'})\in {\cal M}_{\tilde{X'}(N)}$ satisfying $q_{1}^{*}\left(\tilde{m'}\right)=q_{0}^{*}\left(\tilde{m'}\right)\cdot\mu_{(\infty)}(\tilde{m'})$
(cf, subsection \ref{a}).
Note that $\mu_{(\infty)}(\tilde{m'})-1$ is contained in $\cal I$.
Under these notations (see also subsection \ref{a}), we give the following calculation.
\begin{lemm}\label{l2}
We have
\begin{equation*}
\tilde{\Psi}^{*}_{N}(\mu_{(\infty)}(\tilde{m'})-1)=p!\left\{\eta_{\tilde{m}}^{\{p^{m+1}\}} + \sum_{k=1}^{p-1}\frac{(-1)^{p^{m}k}}{k}\eta_{\tilde{m}}^{k}+ \left(1\otimes \tilde{g}-\tilde{g}\otimes1\right)^{p^{m}}\right\}.
\end{equation*}
\end{lemm}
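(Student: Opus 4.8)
The plan is to reduce the statement to a purely formal manipulation modulo $p^{2}$ by exploiting the compatibility of $\tilde{\Psi}_{N}$ with the two projections. Recall that, by its construction, $\tilde{\Psi}_{N}:P_{\tilde{X}/\tilde{S},(m)}\to\tilde{X'}(N)$ sits in the commutative square with $\tilde{F}\times\tilde{F}$, so that $q_{i}\circ\tilde{\Psi}_{N}=\tilde{F}\circ p_{i}$ for $i=0,1$, and hence $\tilde{\Psi}_{N}^{*}\circ q_{i}^{*}=p_{i}^{*}\circ\tilde{F}^{*}$ on structure sheaves and on log structures. Applying $\tilde{\Psi}_{N}^{*}$ to the defining relation $q_{1}^{*}(\tilde{m'})=q_{0}^{*}(\tilde{m'})\cdot\mu_{(\infty)}(\tilde{m'})$ yields $p_{1}^{*}(\tilde{F}^{*}\tilde{m'})=p_{0}^{*}(\tilde{F}^{*}\tilde{m'})\cdot\tilde{\Psi}_{N}^{*}(\mu_{(\infty)}(\tilde{m'}))$, so $\tilde{\Psi}_{N}^{*}(\mu_{(\infty)}(\tilde{m'}))$ is the quotient $p_{1}^{*}(\tilde{F}^{*}\tilde{m'})/p_{0}^{*}(\tilde{F}^{*}\tilde{m'})$.

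Next I would substitute the log strong lifting identity $\tilde{F}^{*}(\tilde{m'})=\tilde{m}^{p^{m+1}}(1+p\tilde{g}^{p^{m}})$ together with $p_{1}^{*}(\tilde{m})=p_{0}^{*}(\tilde{m})\cdot\mu(\tilde{m})=p_{0}^{*}(\tilde{m})(1+\eta_{\tilde{m}})$ from subsection \ref{a}. Writing $1\otimes\tilde{g}:=p_{1}^{*}(\tilde{g})$ and $\tilde{g}\otimes1:=p_{0}^{*}(\tilde{g})$, the factors $p_{0}^{*}(\tilde{m})^{p^{m+1}}$ cancel and one is left with the identity
\[
\tilde{\Psi}_{N}^{*}(\mu_{(\infty)}(\tilde{m'}))=(1+\eta_{\tilde{m}})^{p^{m+1}}\cdot\frac{1+p(1\otimes\tilde{g})^{p^{m}}}{1+p(\tilde{g}\otimes1)^{p^{m}}}
\]
in the ${\mathbb Z}/p^{2}{\mathbb Z}$-algebra ${\cal P}_{\tilde{X}/\tilde{S},(m)}$; it then remains to expand this modulo $p^{2}$ and subtract $1$.

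For the first factor I would apply the binomial theorem and reduce each coefficient $\binom{p^{m+1}}{i}$ modulo $p^{2}$ via Lemma \ref{l4}: only the indices $i=p^{m+1}$ and $i=kp^{m}$ with $1\le k\le p-1$ survive, which produces the term involving $\eta_{\tilde{m}}^{p^{m+1}}$ and the terms with coefficients $(-1)^{k}p!/k$. For the second factor I would use $(1+pa)(1+pb)^{-1}\equiv1+p(a-b)\pmod{p^{2}}$ and then the fact that the $p^{m}$-th power map is additive modulo $p$, so that the $\tilde{g}$-part becomes $p(1\otimes\tilde{g}-\tilde{g}\otimes1)^{p^{m}}$; here it is crucial that this contribution carries an overall factor $p$, so only a congruence modulo $p$ is needed. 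Since $\eta_{\tilde{m}}^{p^{m+1}}=(\eta_{\tilde{m}}^{p^{m}})^{p}=p!\,\gamma_{p}(\eta_{\tilde{m}}^{p^{m}})$ is divisible by $p$, the first factor is $\equiv1\pmod{p}$, so the cross term between the two expansions vanishes modulo $p^{2}$ and the two pieces simply add.

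Finally I would bring the result into the asserted shape by rewriting the surviving ordinary powers of $\eta_{\tilde{m}}$, which lie in the $m$-PD ideal $\bar{I}$, in terms of the divided powers $\eta_{\tilde{m}}^{\{\cdot\}}$ (so that $\eta_{\tilde{m}}^{p^{m+1}}=p!\,\eta_{\tilde{m}}^{\{p^{m+1}\}}$, and similarly for the remaining terms), and by using Lemma \ref{l3} and Wilson's congruence $(p-1)!\equiv-1\pmod{p}$ to normalize the integer coefficients and extract the global factor $p!$. I expect this final bookkeeping to be the genuine obstacle: one must keep exact track of the divided-power identities in $\bar{I}$, of which congruences are needed only modulo $p$ (because of the overall $p$) as opposed to modulo $p^{2}$, and of the precise form of Lemma \ref{l4}, so that the several contributions collapse into the single closed expression in the statement. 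Everything before this point is formal, given the construction and the compatibility of $\tilde{\Psi}_{N}$ established just above.
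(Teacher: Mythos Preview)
Your proposal is correct and follows essentially the same route as the paper: the paper likewise uses the compatibility $\tilde{\Psi}_N^{*}\mu_{(\infty)}(\tilde{m'})=\mu(\tilde{F}^{*}\tilde{m'})=\mu(\tilde{m})^{p^{m+1}}\mu(1+p\tilde{g}^{p^{m}})$, computes the second factor exactly as you do, expands $(1+\eta_{\tilde{m}})^{p^{m+1}}$ via Lemma~\ref{l4}, and then uses $\eta_{\tilde{m}}^{p^{m+1}}=p!\,\eta_{\tilde{m}}^{\{p^{m+1}\}}$ together with $p!\equiv -p\pmod{p^{2}}$. The ``final bookkeeping'' you flag as the genuine obstacle is in fact short: only the top power $\eta_{\tilde{m}}^{p^{m+1}}$ needs to be rewritten in $m$-PD form (the intermediate terms already carry the factor $p!$ from Lemma~\ref{l4} and stay as ordinary powers in the stated formula), so your ``similarly for the remaining terms'' is unnecessary.
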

\begin{proof}
Since the following diagram is commutative
\[\xymatrix{
 {P_{\tilde{X}/\tilde{S}}} \ar[r]^{\Psi}\ar[d] & {\tilde{X'}(N)} \ar[d]\\
 {\tilde{X}\times_{\tilde{S}}\tilde{X}} \ar@<1ex>[d]^{p_{1}}\ar@<-1ex>[d]_{p_{0}}\ar[r]^{\tilde{F}\times \tilde{F}} & {\tilde{X'}\times_{\tilde{S}}\tilde{X'}}\ar@<1ex>[d]^{q_{1}}\ar@<-1ex>[d]_{q_{0}}\\
 {\tilde{X}} \ar[r]^{\tilde{F}} & {\tilde{X'}}
 ,}\]
 we have $\tilde{\Psi}^{*}\left(\mu_{(\infty)}(\tilde{m'})\right)= \mu\left(\tilde{F}^{*}\left(\tilde{m'}\right)\right)=\mu\left(\tilde{m}^{p^{m+1}}\left(1+p\tilde{g}^{p^{m}}\right)\right)=\mu\left(\tilde{m}\right)^{p^{m+1}}\mu\left(1+p\tilde{g}^{p^{m}}\right)$.
We have
\begin{eqnarray*}
\mu\left(1+p\tilde{g}^{p^{m}}\right)&=&p_{0}^{*}\left(1+p\tilde{g}^{p^{m}}\right)^{-1}p_{1}^{*}\left(1+p\tilde{g}^{p^{m}}\right)\\
&=&\left(1-p\tilde{g}^{p^{m}}\right)\otimes \left(1+p\tilde{g}^{p^{m}}\right)\\
&=&1\otimes1+p\left(1\otimes \tilde{g}-\tilde{g}\otimes1\right)^{p^{m}}.
\end{eqnarray*}
Therefore, by Lemma \ref{l4} and the fact $p! \equiv -p$ mod $p^{2}$, we have
\begin{eqnarray*}
\Psi^{*}\left(\mu_{(\infty)}(\tilde{m})-1\right)&=&  \mu(\tilde{m'})^{p^{m+1}}\left(1\otimes1+p\left(1\otimes \tilde{g}-\tilde{g}\otimes1\right)^{p^{m}}\right)-1\\
&=& \left(\eta_{\tilde{m}} +1\right)^{p^{m+1}}\left(1\otimes1+p\left(1\otimes \tilde{g}-\tilde{g}\otimes1\right)^{p^{m}}\right)-1 \\
&=& \left(\eta_{\tilde{m}}^{p^{m+1}}+\sum_{k=1}^{p^{m+1}-1}\binom {p^{m+1}}{k} \eta_{\tilde{m}}^{k}+1\right) \left(1\otimes1+p\left(1\otimes \tilde{g}-\tilde{g}\otimes1\right)^{p^{m}}\right)-1\\
&=& \left(p!\eta_{\tilde{m}}^{\{p^{m+1}\}}+\sum_{k=1}^{p-1}\frac{(-1)^{p^{m}k}}{k}p!\eta_{\tilde{m}}^{k}+1\right) \left(1\otimes1+p\left(1\otimes \tilde{g}-\tilde{g}\otimes1\right)^{p^{m}}\right)-1\\
&=& p!\left\{\eta_{\tilde{m}}^{\{p^{m+1}\}} + \sum_{k=1}^{p-1}\frac{(-1)^{p^{m}k}}{k}\eta_{\tilde{m}}^{k}+ \left(1\otimes \tilde{g}-\tilde{g}\otimes1\right)^{p^{m}}\right\}.
\end{eqnarray*}

\end{proof}

By Lemma \ref{l2}, $\Psi^{*}$ sends $\cal I$ into $p!{\cal P}_{\tilde{X}/\tilde{S}, (m)}$.
We define the morphism ${\cal I}\to {\cal P}^{n}_{X/S}$ by the composition
\begin{equation*}
{\cal I}\xrightarrow{\tilde{\Psi}_{N}^{*}} p!{\cal P}_{\tilde{X}/\tilde{S}, (m)} \xrightarrow{\cong} p{\cal P}_{\tilde{X}/\tilde{S}, (m)}\xrightarrow{\cong} {\cal P}_{\tilde{X}/\tilde{S}, (m)}/p{\cal P}_{\tilde{X}/\tilde{S}, (m)}\xrightarrow{\cong}
{\cal P}_{X/S, (m)},
\end{equation*}
where the first isomorphism is induced by ${\rm id}_{{\cal P}_{\tilde{X}/\tilde{S}, (m)}}$ (hence $p!x$ maps to $-px$) and the second isomorphism is the one in Lemma \ref{l3}.
Since the image of $\cal I$ under this map is contained in the ideal generated by $p$, the image of ${\cal I}^{2}$ under this map is zero. 
We thus obtain the morphism ${\Omega}^{1}_{\tilde{X'}/\tilde{S}}\to {\cal P}^{n}_{X/S}$.
Again by Lemma \ref{l2}, the image of $p{\Omega}^{1}_{\tilde{X'}/\tilde{S}}$ under this map is zero.
We thus obtain the morphism ${\Omega}^{1}_{X'/S}\to {\cal P}^{n}_{X/S}$.
Again by Lemma \ref{l2}, the image of ${\Omega}^{1}_{X'/S}$ under this map is contained in the underlying PD-ideal of $\bar{I}$.
Therefore we have the morphism of PD algebras 
\begin{equation}\label{d6}
\Psi: F^{*}_{X/S}\Gamma_{.}{\Omega}^{1}_{X'/S}\to {\cal P}_{X/S, (m)}.
\end{equation}
We call $\Psi$ the divided Frobenius map.
For each $n$, we define $\Psi_{n}: F^{*}_{X/S}\Gamma_{.}{\Omega}^{1}_{X'/S}\to {\cal P}^{n}_{X/S}$ by the composition of morphisms 
\begin{equation}\label{d6}
\Psi_{n}: F^{*}_{X/S}\Gamma_{.}{\Omega}^{1}_{X'/S}\xrightarrow{\Psi} {\cal P}_{X/S, (m)} \to {\cal P}^{n}_{X/S},
\end{equation}
where the second morphism is the natural projection.
By taking the ${\cal O}_{X}$-duals of $\Psi$ and $\Psi_{n}$, we obtain 
\begin{equation*}
\Phi_{n}: {\cal D}_{X/S, n}^{(m)}\to {\cal H}om_{{\cal O}_{X}}(F_{X/S}^{*}\Gamma_{.}{\Omega}^{1}_{X'/S}, {\cal O}_{X})\xrightarrow{\cong} F_{X/S}^{*}\hat{S^{\cdot}}{\cal T}_{X'/S}, 
\end{equation*}
where the first morphism is the ${\cal O}_{X}$-dual of $\Psi^{*}$ and $\hat{S^{\cdot}}{\cal T}_{X'/S}$ denotes the completion of $S^{\cdot}{\cal T}_{X'/S}$ by the augmentation ideal,
and
\begin{equation*}
\Phi: {\cal H}om_{{\cal O}_{X}}({\cal P}_{X/S, (m)}, {\cal O}_{X})\to F_{X/S}^{*}\hat{S^{\cdot}}{\cal T}_{X'/S}.
\end{equation*}

To give the local description of $\Phi_{n}$, let us set up some notations.
Let us assume we are given a logarithmic system of coordinates $\left\{m_{i}| 1\leq i\leq r\right\}$ of $X\to S$.
Then $\left\{\pi^{*}m_{i}| 1\leq i\leq r\right\}$ forms a logarithmic system of coordinates of $X'\to S$.
Let take a lifting $\tilde{m'}_{i}$ of $\pi^{*}m_{i}$.
Then, since $\tilde{F}$ is a log strong lifting,
we can write 
\begin{equation*}
\tilde{F}^{*}(\tilde{m'}_{i})=\tilde{m}_{i}^{p^{m+1}}(1+p\tilde{g}_{i}^{p^{m}}) \text{\,\,\,\,\,\,\,\,\,\,\,\,for some $\tilde{g}_{i}\in {\cal O}_{\tilde{X}}$}.
\end{equation*}
for each $i$.
We put $g_{i}:=\tilde{g}_{i}$ modulo $p\in {\cal O}_{X}$.
Let $\{\underline{\partial}_{\langle \underline{k}\rangle} \}$ be an ${\cal O}_{X}$-basis of ${\cal D}^{(m)}_{X/S}$ associated to $\{m_{i} \}$
and $\left\{\xi_{i}^{'}\bigl| 1\leq i\leq r\right\}$ the dual basis of  $\left\{\pi^{*}d\log m_{i}\bigl| 1\leq i\leq r\right\}$.
Under these notations, we give the following calculation which is the log version of [GLQ, Proposition 4.10].

\begin{pro}\label{p3}
For any $n\geq p^{m}$, we have
\begin{equation*}
\Phi_{n}\left(\partial_{\langle\underline{k}\rangle}\right)=\left\{
		\begin{array}{c}
		\text{$1$} \\
		\text{$0$}\\
		\text{$\left(1-\partial_{i}(g_{i})^{p^{m}}\right)\xi'_{i} -\sum_{j=1, j\neq i}^{r}\partial_{j}(g_{j})^{p^{m}}\xi'_{j}$ }
		\end{array}
		\right.
		\begin{array}{l}
		\text{if \,\,\,\,\,$|\underline{k}|=0$}\\
		\text{if \,\,\,\,\,$0<|\underline{k}|< p^{m}$}\\
		\text{if \,\,\,\,\,$\underline{k}=p^{m}\underline{\varepsilon}_{i}$,}
		\end{array}
\end{equation*}
where $\partial_{j}$ denotes $\partial_{\langle \underline{\varepsilon}_{j}\rangle}$.
\end{pro}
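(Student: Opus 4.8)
The plan is to compute $\Phi_n(\partial_{\langle\underline{k}\rangle})$ directly from its definition as the ${\cal O}_X$-dual of $\Psi_n$. Recall that $\Phi_n$ is obtained from $\Psi_n\colon F^*_{X/S}\Gamma_{\cdot}\Omega^1_{X'/S}\to {\cal P}^n_{X/S}$ by applying ${\cal H}om_{{\cal O}_X}(-,{\cal O}_X)$ and using the identification ${\cal H}om_{{\cal O}_X}(F^*_{X/S}\Gamma_{\cdot}\Omega^1_{X'/S},{\cal O}_X)\cong F^*_{X/S}\hat{S^{\cdot}}{\cal T}_{X'/S}$. Concretely, for a differential operator $\partial_{\langle\underline{k}\rangle}\in {\cal D}^{(m)}_{X/S,n}={\cal H}om_{{\cal O}_X}({\cal P}^n_{X/S},{\cal O}_X)$, the element $\Phi_n(\partial_{\langle\underline{k}\rangle})$ is the linear functional on $F^*_{X/S}\Gamma_{\cdot}\Omega^1_{X'/S}$ given by $\omega\mapsto \partial_{\langle\underline{k}\rangle}(\Psi_n(\omega))$, and under the duality this functional corresponds to a sum $\sum_{\underline{\ell}}c_{\underline{\ell}}\,(\xi')^{\langle\underline{\ell}\rangle}$ whose coefficients are read off by pairing against the divided-power monomials $\prod_i (\pi^*d\log m_i)^{[\ell_i]}$. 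So the whole computation reduces to evaluating $\partial_{\langle\underline{k}\rangle}$ on $\Psi_n\big(\prod_i(\pi^*d\log m_i)^{[\ell_i]}\big)$ for small $\underline{\ell}$, and in particular on $\Psi_n(1)=1$ and on $\Psi_n(\pi^*d\log m_i)$.

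The key computational input is Lemma \ref{l2}, which already gives the image under $\tilde\Psi^*_N$ of $\mu_{(\infty)}(\tilde m'_i)-1$; tracing this through the chain of isomorphisms $p!{\cal P}_{\tilde X/\tilde S,(m)}\xrightarrow{\cong}p{\cal P}_{\tilde X/\tilde S,(m)}\xrightarrow{\cong}{\cal P}_{\tilde X/\tilde S,(m)}/p\xrightarrow{\cong}{\cal P}_{X/S,(m)}$ used to define $\Psi$ (recall $p!x\mapsto -px$), one finds that $\Psi$ sends the generator $\pi^*d\log m_i$ of $\Omega^1_{X'/S}$ to
\begin{equation*}
\Psi(\pi^*d\log m_i)=-\Big(\eta_{m_i}^{\{p^{m+1}\}}+\sum_{k=1}^{p-1}\frac{(-1)^{p^m k}}{k}\eta_{m_i}^{k}+(1\otimes g_i-g_i\otimes 1)^{p^m}\Big)
\end{equation*}
in ${\cal P}_{X/S,(m)}$, reduced mod $p$. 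The first steps are then: (i) unwind the duality identification so that $c_{\underline{\ell}}=\partial_{\langle\underline{k}\rangle}$ applied to the appropriate symmetrized expression; (ii) observe $\Psi_n(1)=1$, giving the value $1$ when $|\underline{k}|=0$ and $0$ when $|\underline{k}|>0$ contributes to the degree-zero part; (iii) for the degree-one part, expand $(1\otimes g_i-g_i\otimes1)^{p^m}$ using the local coordinate expansion of $g_i$ in terms of $\eta_{m_j}$, i.e. write $1\otimes g_i-g_i\otimes1=\sum_j \partial_j(g_i)\eta_{m_j}+(\text{higher order in }\underline{\eta})$, so that modulo higher $\underline{\eta}$-degree one gets $(1\otimes g_i-g_i\otimes1)^{p^m}\equiv \sum_j \partial_j(g_i)^{p^m}\eta_{m_j}^{p^m}$; (iv) pair the resulting expression, namely $-\eta_{m_i}^{\{p^{m+1}\}}-\sum_{k=1}^{p-1}\frac{(-1)^{p^m k}}{k}\eta_{m_i}^k-\sum_j\partial_j(g_i)^{p^m}\eta_{m_j}^{p^m}$, against the dual basis element $\partial_{\langle\underline{k}\rangle}$, using that $\partial_{\langle\underline{k}\rangle}$ is dual to $\underline\eta^{\{\underline{k}\}}$. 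Here one needs the relations between $\eta_{m_j}^k$, $\eta_{m_j}^{\{p^m\}}$ and the divided forms $\eta^{\{\underline k\}}$ — in particular that for $0<k<p^{m+1}$ with $p^m\nmid k$ the element $\eta_{m_i}^k$ is a nonzero multiple of $\eta^{\{k\underline\varepsilon_i\}}$ but pairs to $0$ with $\partial_{\langle\underline\ell\rangle}$ for $|\underline\ell|<p^m$ after accounting for the $p!$-normalization, while $\eta_{m_i}^{p^m}=p!\,\eta_{m_i}^{\{p^m\}}\equiv 0$ unless compensated. Bookkeeping the constant $-1$ against the factor $(-1)^{p^m k}/k$ at $k=p^m$ (where $(-1)^{p^m\cdot p^m}/p^m$-type terms and the leading $1$ from $\binom{p^{m+1}}{0}$ interplay) is what produces the clean coefficients $1-\partial_i(g_i)^{p^m}$ and $-\partial_j(g_j)^{p^m}$.

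I expect the main obstacle to be bookkeeping rather than conceptual: correctly matching the divided-power/$m$-PD normalizations on both sides (the $f^{\{k\}}$ versus $f^k$ conventions, the $p!$ factors, and the identification ${\cal H}om_{{\cal O}_X}(F^*\Gamma_{\cdot}\Omega^1,{\cal O}_X)\cong F^*\hat S^{\cdot}{\cal T}$) so that the stray signs and the factor $k$ in $\frac{(-1)^{p^m k}}{k}$ cancel to leave exactly $1$ in the $\underline k=p^m\underline\varepsilon_i$, $j=i$ coefficient. One has to be careful that the expansion of $g_i$ in coordinates only contributes via its degree-$(p^m)$ behavior, i.e. that the higher-order terms of $(1\otimes g_i-g_i\otimes1)^{p^m}$ lie in $\bar I^{\{n+1\}}$ or pair trivially with $\partial_{\langle\underline k\rangle}$ for $|\underline k|\le p^m$; this uses the hypothesis $n\ge p^m$ together with the multiplicativity of the $m$-PD filtration. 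Once these normalizations are pinned down, evaluating $\partial_{\langle\underline k\rangle}$ against the explicit element above for each of the three ranges of $|\underline k|$ is a direct computation following the pattern of [GLQ, Proposition 4.10], and the logarithmic case presents no extra difficulty here since everything has been reduced to the local coordinate computation via $\eta_{m_i}$.
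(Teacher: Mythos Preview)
Your approach is exactly the paper's: compute $\Psi_n(\pi^*d\log m_j)$ explicitly from Lemma~\ref{l2}, expand $(1\otimes g_j-g_j\otimes 1)^{p^m}$ in the $\eta$-basis via $1\otimes g_j-g_j\otimes 1=\sum_{\underline{k}\neq 0}\partial_{\langle\underline{k}\rangle}(g_j)\eta^{\{\underline{k}\}}$, and then pair with $\partial_{\langle\underline{k}\rangle}$.

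However, your worry about normalizations contains an actual error that would derail the computation. For an $m$-PD structure one has $f^{\{k\}}=f^r\gamma_q(f^{p^m})$ with $k=qp^m+r$, $0\le r<p^m$; in particular $\eta_{m_i}^{\{p^m\}}=\gamma_1(\eta_{m_i}^{p^m})=\eta_{m_i}^{p^m}$, \emph{not} $\eta_{m_i}^{p^m}=p!\,\eta_{m_i}^{\{p^m\}}$. So $\eta_{m_i}^{p^m}$ is not zero in characteristic $p$ and pairs to $1$ against $\partial_{\langle p^m\underline{\varepsilon}_i\rangle}$ with no compensation needed. Likewise, the exponents in the sum coming from Lemma~\ref{l2} should be $\eta_{m_j}^{p^m k}$, not $\eta_{m_j}^{k}$ (this is what Lemma~\ref{l4} actually gives, since $\binom{p^{m+1}}{i}$ survives only at $i=kp^m$); with that correction the $k=1$ term already contributes the clean $+\eta_{m_j}^{p^m}$, and there is no delicate cancellation of the $1/k$ factors---the $k\ge 2$ terms have degree $\ge 2p^m>p^m$ and are simply discarded. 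Once you fix these two points the bookkeeping is immediate and matches the paper's computation line for line.
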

\begin{proof}
In the case $|\underline{k}|=0$ the assertion is obvious.
By Lemma \ref{l2} and the construction of $\Psi_{n}: F^{*}_{X/S}\Gamma_{\cdot}\Omega^{1}_{X'/S}\to {\cal P}^{n}_{X/S, (m)}$, $\Psi_{n}$ sends $1\otimes d\log \pi^{*}m_{j}$ to
\begin{equation}\label{e6}
-\eta_{m_{j}}^{\{p^{m+1}\}} - \sum_{k=1}^{p-1}\frac{(-1)^{p^{m}k}}{k}\eta_{m_{j}}^{p^{m}k}-\left(1\otimes g_{j}-g_{j}\otimes1\right)^{p^{m}}.
\end{equation}
By this description, we see the assertion in the case $0<|\underline{k}|< p^{m}$.
Let us consider the case of $\underline{k}=p^{m}\underline{\varepsilon}_{i}$.
We need to calculate the image of (\ref{e6}) under $\partial_{\langle \underline{\varepsilon}_{i}\rangle}$, so
we may ignore $\eta_{m_{j}}^{\{\underline{l}\}}$ with $|\underline{l}|> p^{m}$.
We thus calculate 
\begin{eqnarray*}
(\ref{e6})&=& -\eta_{m_{j}}^{\{p^{m+1}\}} - \sum_{k=1}^{p-1}\frac{(-1)^{p^{m}k}}{k}\eta_{m_{j}}^{p^{m}k}-\left(\sum_{\underline{k}\neq 0} \partial_{\langle \underline{k}\rangle}(g_{j})\eta^{\{\underline{k}\}} \right)^{p^{m}}\\
&=&\eta_{m_{j}}^{p^{m}}-\sum_{j=1}^{r}\partial_{j}\left( g_{j}\right)^{p^{m}}\eta_{m_{j}}^{p^{m}}+ \text{ (the terms of $\eta_{m_{j}}^{\{\underline{l}\}}$ with $|\underline{l}|> p^{m}$)}.\\
\end{eqnarray*}
By this description, we see that $\partial_{\langle \underline{\varepsilon}_{i}\rangle}$ sends (\ref{e6}) to $-\left(1+\partial_{i}(g_{j})^{p^{m}}\right)$ if $i=j$
and to $\partial_{j}\left( g_{j}\right)^{p^{m}}$ if $i\neq j$.
We have seen the image of $\left(1\otimes d\log \pi^{*}m_{j}\right)^{\left[\underline{l}\right]}$ under $\Psi_{n}$ is zero if $|\underline{l}|>1$.
Hence $\Phi_{n}\left(\partial_{\langle p^{m}\underline{\varepsilon}_{i}\rangle}\right)\in \hat{S^{\cdot}}{\cal T}_{X'/S}$ is
\begin{equation*}
\left(1-\partial_{i}(g_{i})^{p^{m}}\right)\xi'_{i}-\sum_{j=1, j\neq i}^{r}\partial_{j}\left( g_{j}\right)^{p^{m}}\xi'_{j}.
\end{equation*}
We finish the proof.
\end{proof}


\section{Local Cartier transform}
In this section, we construct the log Local Cartier transform of higher level.
Throughout this section, assume that we are given an integral log smooth morphism $X\to S$ defined over ${\mathbb Z}/p{\mathbb Z}$ with a log strong lifting $\tilde{F}$ and
a sheaf of ${\cal I}^{gp}_{X}$-sets $\cal J$.

\begin{defi}
For each $n\in {\mathbb N}$, we define the morphism 
$\rho_{n}: {\cal P}^{n}_{X/S, (m)}\otimes_{{\cal O}_{X}}F_{X/S}^{*}{\Gamma}_{\cdot} \Omega^{1}_{X'/S} \to F_{X/S}^{*}{\Gamma}_{\cdot} \Omega^{1}_{X'/S}\otimes_{{\cal O}_{X}}{\cal P}^{n}_{X/S, (m)}$ of ${\cal P}^{n}_{X/S, (m)}$-modules by the ${\cal P}^{n}_{X/S, (m)}$-linearization of the composition
\begin{equation*}
\rho'_{n}: F_{X/S}^{*}{\Gamma}_{\cdot} \Omega^{1}_{X'/S}\xrightarrow{\Delta} F_{X/S}^{*}{\Gamma}_{\cdot} \Omega^{1}_{X'/S}\otimes_{{\cal O}_{X}}F_{X/S}^{*}{\Gamma}_{\cdot} \Omega^{1}_{X'/S}\xrightarrow{{\rm id}\otimes {\Psi}_{n}}F_{X/S}^{*}{\Gamma}_{\cdot} \Omega^{1}_{X'/S}\otimes_{{\cal O}_{X}} {\cal P}^{n}_{X/S, (m)},
\end{equation*}
where $\Delta$ is the comultiplication of $F_{X/S}^{*}{\Gamma}_{\cdot} \Omega^{1}_{X'/S}$ (which is the ${\cal O}_{X}$-dual of the multiplication of $F^{*}_{X/S}\hat{S^{\cdot}}{\cal T}_{X'/S}$).
\end{defi}

\begin{pro}\label{p1}
$\{\rho_{n} \}$ forms an $m$-PD stratification on $F_{X/S}^{*}{\Gamma}_{\cdot} \Omega^{1}_{X'/S}$.
\end{pro}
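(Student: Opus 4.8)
The plan is to check, one by one, the axioms defining an $m$-PD stratification on $F_{X/S}^{*}{\Gamma}_{\cdot}\Omega^{1}_{X'/S}$ for the family $\{\rho_{n}\}$: that $\rho_{0}$ is the identity; that the $\rho_{n}$ are compatible with the projections ${\cal P}^{n+1}_{X/S, (m)}\to{\cal P}^{n}_{X/S, (m)}$; that each $\rho_{n}$ is a ${\cal P}^{n}_{X/S, (m)}$-linear isomorphism; and that the cocycle condition relative to the comultiplications $\delta^{n, n'}\colon {\cal P}^{n+n'}_{X/S, (m)}\to {\cal P}^{n}_{X/S, (m)}\otimes_{{\cal O}_{X}}{\cal P}^{n'}_{X/S, (m)}$ holds. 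The first two are formal: $\Psi_{0}$ is the projection onto ${\cal P}^{0}_{X/S, (m)}={\cal O}_{X}$, which is exactly the counit of $F_{X/S}^{*}{\Gamma}_{\cdot}\Omega^{1}_{X'/S}$ (since $\Psi$ takes values in the PD-ideal $\bar{I}$ in positive divided-power degree), so $\rho'_{0}=({\rm id}\otimes\Psi_{0})\circ\Delta={\rm id}$ by the counit axiom, hence $\rho_{0}={\rm id}$; and compatibility with the transition maps holds because $\Psi_{n}$ is by construction $\Psi$ followed by ${\cal P}_{X/S, (m)}\to{\cal P}^{n}_{X/S, (m)}$, which is compatible both with $\Delta$ and with the projections among the ${\cal P}^{n}_{X/S, (m)}$.

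The substantial point is the cocycle condition, and the fact I would isolate is that $\Psi$ is a morphism of coalgebras, not merely of PD-algebras: writing $\delta\colon {\cal P}_{X/S, (m)}\to {\cal P}_{X/S, (m)}\otimes_{{\cal O}_{X}}{\cal P}_{X/S, (m)}$ for the map induced by the projection $X\times_{S}X\times_{S}X\to X\times_{S}X$ onto the first and third factors and $\Delta$ for the comultiplication of $F_{X/S}^{*}{\Gamma}_{\cdot}\Omega^{1}_{X'/S}$, one has
\[
\delta\circ\Psi=(\Psi\otimes\Psi)\circ\Delta .
\]
This holds because, after lifting modulo $p^{2}$, $\Psi$ is induced by $\tilde{\Psi}_{N}\colon P_{\tilde{X}/\tilde{S}, (m)}\to \tilde{X'}(N)$, which lies over $\tilde{F}\times\tilde{F}\colon \tilde{X}\times_{\tilde{S}}\tilde{X}\to \tilde{X'}\times_{\tilde{S}}\tilde{X'}$; since $\tilde{F}\times\tilde{F}\times\tilde{F}$ intertwines the two projections to $\tilde{X}\times_{\tilde{S}}\tilde{X}$ and $\tilde{X'}\times_{\tilde{S}}\tilde{X'}$, the uniqueness in the universal properties of the log $m$-PD envelope and of the log infinitesimal neighborhoods forces the various $\tilde{\Psi}$'s to be compatible with the groupoid compositions, and neither the division by $p!$ nor the reduction modulo $p$ used to define $\Psi$ destroys this compatibility. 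Granting the displayed identity (and its truncated form $\delta^{n, n'}\circ\Psi_{n+n'}=(\Psi_{n}\otimes\Psi_{n'})\circ\Delta$, obtained by tracking divided-power degrees), the cocycle condition for $\{\rho_{n}\}$ unwinds to a diagram chase using only the coassociativity of $\Delta$ and this identity, both composites in the cocycle square reducing to the ${\cal P}^{n+n'}_{X/S, (m)}$-linearization of $({\rm id}\otimes\Psi_{n+n'})\circ\Delta$ composed with $\delta^{n, n'}$.

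For the isomorphism claim, ${\cal P}^{n}_{X/S, (m)}$-linearity is built into the definition of $\rho_{n}$, and bijectivity I would obtain by filtering $F_{X/S}^{*}{\Gamma}_{\cdot}\Omega^{1}_{X'/S}$ by divided-power degree: $\Delta$ preserves this filtration, $\Psi$ raises degree substantially with respect to the $m$-PD-adic filtration of $\bar{I}$ (by Lemma \ref{l2} a degree-one element is sent into $p!\bar{I}^{\{p^{m}\}}$, hence after the normalization in the construction of $\Psi$ into $\bar{I}^{\{p^{m}\}}$), and ${\cal P}^{n}_{X/S, (m)}$ is killed by $\bar{I}^{\{n+1\}}$; consequently, on each finite-rank graded truncation $\rho_{n}$ reduces to the identity modulo a nilpotent ideal and is an isomorphism, and passing to the projective limit gives bijectivity of $\rho_{n}$ with ${\cal P}^{n}_{X/S, (m)}$-linear inverse. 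That the resulting stratification is an $m$-PD one, rather than merely infinitesimal, is automatic, since $\Psi$ is a morphism of PD-algebras and the ${\cal P}^{n}_{X/S, (m)}$ carry the $m$-PD structure. As a check, and as an alternative route in the presence of a logarithmic system of coordinates, all of this can be verified by computing $\rho_{n}$ on the dual basis $\{\partial_{\langle\underline{k}\rangle}\}$ via Proposition \ref{p3} and the local formula for $\Psi_{n}$, reducing everything to identities in $\hat{S^{\cdot}}{\cal T}_{X'/S}$ and ${\cal P}_{X/S, (m)}$.

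I expect the main obstacle to be precisely the coalgebra-compatibility of $\Psi$ feeding the cocycle condition: since $\Psi$ is constructed only indirectly --- through the choice of log strong lifting $\tilde{F}$, a sufficiently large log infinitesimal neighborhood $\tilde{X'}(N)$ (whose existence globally uses the quasi-compactness of $\tilde{X'}$), and a division by $p!$ --- one must argue carefully that none of these auxiliary choices interferes with the three-fold products governing $\delta^{n, n'}$, so that the identity $\delta\circ\Psi=(\Psi\otimes\Psi)\circ\Delta$ really holds globally. Once this is secured, the remaining verifications are routine bookkeeping.
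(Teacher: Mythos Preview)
Your proposal is correct and follows essentially the same approach as the paper. Both arguments reduce the proposition to the single substantive identity
\[
\delta^{n,n'}\circ\Psi_{n+n'}=(\Psi_{n}\otimes\Psi_{n'})\circ\Delta,
\]
and both justify it by the geometric origin of $\Psi$ (the morphism $\tilde{\Psi}_{N}$ lying over $\tilde{F}\times\tilde{F}$, hence compatible with the triple products). The only difference is packaging: the paper invokes [M, Proposition~2.6.1] to reduce directly to $\rho'_{1}=\mathrm{id}$ plus this square, whereas you unpack the axioms by hand and add a separate filtration argument for the invertibility of $\rho_{n}$. That extra step is harmless but unnecessary---once $\rho'_{0}=\mathrm{id}$ and the cocycle condition hold, invertibility is automatic from the general formalism, which is precisely what the citation to Montagnon absorbs.
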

\begin{proof}
By [M, Proposition 2.6.1], it suffices to show that $\{\rho'_{n}\}$ satisfies $\rho'_{1}={\rm id}$ and the following diagram is commutative:

\[\xymatrix{
{F_{X/S}^{*}{\Gamma}_{\cdot} \Omega^{1}_{X'/S}}\ar[r]^{\rho'_{n+n'}} \ar[d]_{\rho'_{n}}& {F_{X/S}^{*}{\Gamma}_{\cdot} \Omega_{X'/S}\otimes_{{\cal O}_{X}} {\cal P}^{n+n'}_{X/S, (m)}}\ar[d]^{{\rm id}\otimes \delta^{n, n'}}\\
{F_{X/S}^{*}{\Gamma}_{\cdot} \Omega_{X'/S}\otimes_{{\cal O}_{X}} {\cal P}^{n}_{X/S, (m)} }\ar[r] & {F_{X/S}^{*}{\Gamma}_{\cdot} \Omega_{X'/S}\otimes_{{\cal O}_{X}} {\cal P}^{n'}_{X/S, (m)}\otimes_{{\cal O}_{X}}{\cal P}^{n}_{X/S, (m)}}
,}\]
where the lower horizontal arrow is $\rho'_{n'}\otimes {\rm id}$.
$\rho'_{1}={\rm id}$ is obvious.
The commutativity of the above diagram follows from the commutativity of the following diagram:
\[\xymatrix{
{F_{X/S}^{*}{\Gamma}_{\cdot} \Omega^{1}_{X'/S}} \ar[r]^{\Psi_{n+n'}} \ar[d]_{\Delta} & {{\cal P}^{n+n'}_{X/S, (m)}}\ar[d]^{\delta^{n, n'}}\\
{F_{X/S}^{*}{\Gamma}_{\cdot} \Omega^{1}_{X'/S}\otimes_{{\cal O}_{X}}F_{X/S}^{*}{\Gamma}_{\cdot} \Omega^{1}_{X'/S}} \ar[r] & {{\cal P}^{n'}_{X/S, (m)}\otimes_{{\cal O}_{X}}{\cal P}^{n}_{X/S, (m)},}
}\]
where the lower horizontal arrow is $\Psi_{n'}\otimes\Psi_{n}$.
This fact follows from our geometric construction of $\Psi$.

\end{proof}
By Proposition \ref{p1}, $F_{X/S}^{*}\Gamma_{.}{\Omega}^{1}_{X'/S}$ has the ${\cal D}^{(m)}_{X/S}$-action associated to $\{\rho_{n} \}$.
By taking the dual as left ${\cal D}^{(m)}_{X/S}$-modules,
we obtain the ${\cal D}^{(m)}_{X/S}$-action on $F_{X/S}^{*} \hat{S^{\cdot}}{\cal T}_{X'/S}$ .
On the other hand, let us endow $F_{X/S}^{*}\hat{S}^{\cdot}{\cal T}_{X'/S}$ with an $\hat{S}^{\cdot}{\cal T}_{X'/S}$-action as follows.
We first consider the morphism of ${\cal O}_{X}$-modules defined by
\begin{equation*}
S^{\cdot}{\cal T}_{X'/S}\to {\cal D}^{(m)}_{X/S}\to {\cal H}om_{{\cal O}_{X}}({\cal P}_{X/S, (m)}, {\cal O}_{X}),
\end{equation*}
where the first morphism is the $p^{m+1}$-curvature map (see Theorem \ref{t2}) and the second one is the inductive limit of the dual of the natural projection
${\cal P}_{X/S, (m)}\to {\cal P}^{n}_{X/S, (m)}$.
This naturally extends to the morphism of ${\cal O}_{X}$-modules $\hat{S}^{\cdot}{\cal T}_{X'/S}\to {\cal H}om_{{\cal O}_{X}}({\cal P}_{X/S, (m)}, {\cal O}_{X})$.
We define $\hat{S}^{\cdot}{\cal T}_{X'/S}$-action on $F_{X/S}^{*}\hat{S}^{\cdot}{\cal T}_{X'/S}$ by
\begin{eqnarray*}
\hat{S}^{\cdot}{\cal T}_{X'/S}\otimes_{{\cal O}_{X}} F_{X/S}^{*}\hat{S}^{\cdot}{\cal T}_{X'/S}&\xrightarrow{\beta\otimes{\rm id}}& {\cal H}om_{{\cal O}_{X}}({\cal P}_{X/S, (m)}, {\cal O}_{X})\otimes_{{\cal O}_{X}} F_{X/S}^{*}\hat{S}^{\cdot}{\cal T}_{X'/S}\\
&\xrightarrow{\Phi\otimes {\rm id}}& F_{X/S}^{*}\hat{S}^{\cdot}{\cal T}_{X'/S}\otimes_{{\cal O}_{X}}F_{X/S}^{*}\hat{S}^{\cdot}{\cal T}_{X'/S}\\
&\longrightarrow & F_{X/S}^{*}\hat{S}^{\cdot}{\cal T}_{X'/S},
\end{eqnarray*}
where the third arrow denotes the multiplication.
We now obtain the following splitting isomorphism.
\begin{theo}\label{t8}
There exists an action of $\left({\cal B}_{X/S}^{(m+1)}\otimes_{{\cal O}_{X'}}\hat{S^{\cdot}}{\cal T}_{X'/S}\right) \otimes_{{\cal B}_{X/S}^{(m+1)}\otimes_{{\cal O}_{X'}}S^{\cdot}{\cal T}_{X'/S}}\tilde{{\cal D}}^{(m)}_{X/S}$ on ${\cal A}_{X}^{gp}\otimes_{{\cal O}_{X}}F_{X/S}^{*} \hat{S^{\cdot}}{\cal T}_{X'/S}$, which depends on a choice of a log strong lifting $\tilde{F}$ such that it defines an isomorphism of ${\cal I}^{gp}_{X}$-indexed algebras
\begin{equation*}
\left({\cal B}_{X/S}^{(m+1)}\otimes_{{\cal O}_{X'}}\hat{S^{\cdot}}{\cal T}_{X'/S}\right) \otimes_{{\cal B}_{X/S}^{(m+1)}\otimes_{{\cal O}_{X'}}S^{\cdot}{\cal T}_{X'/S}}\tilde{{\cal D}}^{(m)}_{X/S}\xrightarrow{\cong}
{\cal E}nd_{{\cal B}_{X/S}^{(m+1)}\otimes_{{\cal O}_{X'}}\hat{S^{\cdot}}{\cal T}_{X'/S}}\left({\cal A}_{X}^{gp}\otimes_{{\cal O}_{X}}F_{X/S}^{*} \hat{S^{\cdot}}{\cal T}_{X'/S}\right).
\end{equation*}
\end{theo}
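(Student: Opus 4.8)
The plan is to equip the ${\cal I}^{gp}_{X}$-indexed module $M:={\cal A}_{X}^{gp}\otimes_{{\cal O}_{X}}F_{X/S}^{*}\hat{S^{\cdot}}{\cal T}_{X'/S}$ with a structure of module over $\hat{\mathfrak Z}\otimes_{\mathfrak Z}\tilde{{\cal D}}^{(m)}_{X/S}$, where $\mathfrak Z:={\cal B}_{X/S}^{(m+1)}\otimes_{{\cal O}_{X'}}S^{\cdot}{\cal T}_{X'/S}$ is the center of $\tilde{{\cal D}}^{(m)}_{X/S}$ (Theorem \ref{t1}) and $\hat{\mathfrak Z}:={\cal B}_{X/S}^{(m+1)}\otimes_{{\cal O}_{X'}}\hat{S^{\cdot}}{\cal T}_{X'/S}$; the asserted morphism is then the ${\cal I}^{gp}_{X}$-indexed algebra homomorphism $\Theta\colon\hat{\mathfrak Z}\otimes_{\mathfrak Z}\tilde{{\cal D}}^{(m)}_{X/S}\to{\cal E}nd_{\hat{\mathfrak Z}}(M)$ attached to this module structure, and it then remains to see $\Theta$ is an isomorphism. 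Since $\hat{\mathfrak Z}$ is commutative and $\mathfrak Z$ is central in $\tilde{{\cal D}}^{(m)}_{X/S}$, such a module structure amounts to an ${\cal O}_{X}$-module carrying a left $\tilde{{\cal D}}^{(m)}_{X/S}$-action and a commuting $\hat{\mathfrak Z}$-action whose restrictions to $\mathfrak Z$ agree, so I must produce the two actions and verify these two compatibilities.

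The $\tilde{{\cal D}}^{(m)}_{X/S}$-action: by Proposition \ref{p1} and the discussion following it, $F_{X/S}^{*}\hat{S^{\cdot}}{\cal T}_{X'/S}$ carries a left ${\cal D}^{(m)}_{X/S}$-action obtained by dualizing the $m$-PD stratification $\{\rho_{n}\}$ on $F_{X/S}^{*}{\Gamma}_{\cdot}\Omega^{1}_{X'/S}$; since ${\cal A}_{X}^{gp}$ is canonically a left $\tilde{{\cal D}}^{(m)}_{X/S}$-module (subsection \ref{c2}), the tensor product $M$ inherits a left $\tilde{{\cal D}}^{(m)}_{X/S}$-action via the Leibniz rule. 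The $\hat{\mathfrak Z}$-action: let ${\cal B}_{X/S}^{(m+1)}$ act via multiplication on the tensorand ${\cal A}_{X}^{gp}$ through the inclusion ${\cal B}_{X/S}^{(m+1)}\hookrightarrow{\cal A}_{X}^{gp}$, and let $\hat{S^{\cdot}}{\cal T}_{X'/S}$ act on $F_{X/S}^{*}\hat{S^{\cdot}}{\cal T}_{X'/S}$, hence on $M$, by the composite of $\beta\otimes{\rm id}$, $\Phi\otimes{\rm id}$ and the multiplication of $F_{X/S}^{*}\hat{S^{\cdot}}{\cal T}_{X'/S}$ introduced just before the statement. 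That this composite is well defined on the completion and is a genuine $\hat{S^{\cdot}}{\cal T}_{X'/S}$-module structure, and that the ${\cal B}_{X/S}^{(m+1)}$- and $\hat{S^{\cdot}}{\cal T}_{X'/S}$-actions together define an $\hat{\mathfrak Z}$-action, I check in an \'etale-local logarithmic coordinate system.

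The heart of the proof is the two compatibilities, and it is here that the divided Frobenius $\Psi$ enters in an essential way. Working \'etale-locally with a logarithmic system of coordinates $\{m_{i}\}$, I must show that (i) the $\tilde{{\cal D}}^{(m)}_{X/S}$-action on $M$ commutes with the $\hat{\mathfrak Z}$-action — equivalently, that it is $\hat{\mathfrak Z}$-linear, so that $\Theta$ lands in ${\cal E}nd_{\hat{\mathfrak Z}}(M)$ — and (ii) the two induced actions of $\mathfrak Z$ agree, i.e. the central operator $\beta(\xi'_{i})=\partial_{\langle p^{m+1}\underline{\varepsilon}_{i}\rangle}$ acts on $M$ the same way through $\tilde{{\cal D}}^{(m)}_{X/S}$ and through the explicit formula defining the $\hat{S^{\cdot}}{\cal T}_{X'/S}$-action. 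Both come down to the explicit shape of $\rho_{n}$, which is governed by the images $\Psi_{n}(1\otimes d\log\pi^{*}m_{j})$ computed in Lemma \ref{l2}, together with the local description of $\Phi_{n}$ in Proposition \ref{p3}; the point is that the correction terms built from the functions $g_{i}$ of the log strong lifting $\tilde{F}$ occur consistently on both sides, which is precisely why the construction depends on $\tilde{F}$. I expect this local bookkeeping to be the only genuinely delicate step.

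Granting the compatibilities, $M$ is a module over $\hat{\mathfrak Z}\otimes_{\mathfrak Z}\tilde{{\cal D}}^{(m)}_{X/S}$ and $\Theta$ is defined. Unwinding the $\hat{\mathfrak Z}$-module structure of $M$ in a logarithmic coordinate system and using Theorem \ref{e}, $M$ is a locally free ${\cal I}^{gp}_{X}$-indexed $\hat{\mathfrak Z}$-module of rank $p^{r(m+1)}$, so ${\cal E}nd_{\hat{\mathfrak Z}}(M)$ is an Azumaya algebra over $\hat{\mathfrak Z}$ of rank $p^{r(m+1)}$; and $\hat{\mathfrak Z}\otimes_{\mathfrak Z}\tilde{{\cal D}}^{(m)}_{X/S}$ is the base change along $\mathfrak Z\to\hat{\mathfrak Z}$ of the Azumaya algebra $\tilde{{\cal D}}^{(m)}_{X/S}$ (Theorem \ref{t1}), hence also Azumaya over $\hat{\mathfrak Z}$ of rank $p^{r(m+1)}$. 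Finally $\Theta$ is an isomorphism: its kernel is a two-sided ideal of an Azumaya algebra, hence ${\cal K}\cdot(\hat{\mathfrak Z}\otimes_{\mathfrak Z}\tilde{{\cal D}}^{(m)}_{X/S})$ for an ideal ${\cal K}\subseteq\hat{\mathfrak Z}$, and ${\cal K}=\ker\Theta\cap\hat{\mathfrak Z}=0$ since $\hat{\mathfrak Z}$ acts faithfully on the nonzero locally free module $M$, so $\Theta$ is injective; and reducing modulo each point of $\hat{\mathfrak Z}$, $\Theta$ becomes a homomorphism from a central simple algebra to a matrix algebra of the same dimension, necessarily bijective, so ${\rm coker}\,\Theta$ vanishes by Nakayama and $\Theta$ is surjective. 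All of these arguments pass from the ${\cal I}^{gp}_{X}$-indexed setting to the usual one by working \'etale-locally, as in \cite{O}, which completes the proof.
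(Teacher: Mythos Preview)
Your proposal is correct and follows essentially the same route as the paper: construct the module structure on ${\cal A}_{X}^{gp}\otimes_{{\cal O}_{X}}F_{X/S}^{*}\hat{S^{\cdot}}{\cal T}_{X'/S}$ from Proposition~\ref{p1} together with the $\hat{S^{\cdot}}{\cal T}_{X'/S}$-action via $\Phi\circ\beta$, invoke Theorem~\ref{e} for local freeness of rank $p^{r(m+1)}$, invoke Theorem~\ref{t1} for the Azumaya property of the source, and conclude. The only difference is cosmetic: where you spell out the final isomorphism argument directly (kernel as an extended central ideal, surjectivity by Nakayama), the paper simply cites [S, Corollary~2.5], which packages exactly that statement.
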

\begin{proof}
By construction, we see that our ${\cal D}^{(m)}_{X/S}$-action on $F_{X/S}^{*} \hat{S^{\cdot}}{\cal T}_{X'/S}$ naturally extends to $\hat{S^{\cdot}}{\cal T}_{X'/S} \otimes_{S^{\cdot}{\cal T}_{X'/S}}{\cal D}^{(m)}_{X/S}$-action on it.
Thus ${\cal A}_{X}^{gp}\otimes_{{\cal O}_{X}}F_{X/S}^{*} \hat{S^{\cdot}}{\cal T}_{X'/S}$ has an action of
\begin{equation*}
{\cal A}_{X}^{gp}\otimes_{{\cal O}_{X}}\hat{S^{\cdot}}{\cal T}_{X'/S} \otimes_{S^{\cdot}{\cal T}_{X'/S}}{\cal D}^{(m)}_{X/S} \xrightarrow{\cong} \left({\cal B}_{X/S}^{(m+1)}\otimes_{{\cal O}_{X'}}\hat{S^{\cdot}}{\cal T}_{X'/S}\right) \otimes_{{\cal B}_{X/S}^{(m+1)}\otimes_{{\cal O}_{X'}}S^{\cdot}{\cal T}_{X'/S}}\tilde{{\cal D}}^{(m)}_{X/S}
\end{equation*}
and we obtain the morphism of ${\cal I}^{gp}_{X}$-indexed algebras in Theorem \ref{t8}.
By Theorem \ref{e}, ${\cal A}_{X}^{gp}\otimes_{{\cal O}_{X}}F_{X/S}^{*} \hat{S^{\cdot}}{\cal T}_{X'/S}\xrightarrow{\cong}{\cal A}_{X}^{gp}\otimes_{{\cal O}_{X'}} \hat{S^{\cdot}}{\cal T}_{X'/S}$ is locally free over ${\cal B}_{X/S}^{(m+1)}\otimes_{{\cal O}_{X'}}\hat{S^{\cdot}}{\cal T}_{X'/S}$ of rank $p^{r(m+1)}$
and, by Theorem \ref{t1} (2), ${\cal B}_{X/S}^{(m+1)}\otimes_{{\cal O}_{X'}}\hat{S^{\cdot}}{\cal T}_{X'/S} \otimes_{{\cal B}_{X/S}^{(m+1)}\otimes_{{\cal O}_{X'}}S^{\cdot}{\cal T}_{X'/S}}\tilde{{\cal D}}^{(m)}_{X/S}$ is an Azumaya algebra of rank $p^{r(m+1)}$ over ${\cal B}_{X/S}^{(m+1)}\otimes_{{\cal O}_{X'}}\hat{S^{\cdot}}{\cal T}_{X'/S}$.
Therefore this morphism is an isomorphism of ${\cal I}^{gp}_{X}$-indexed algebras by [S, Corollary 2.5].
\end{proof}

By virtue of Proposition \ref{l}, we now obtain our main theorem in this paper, which we call the log local Cartier transform of higher level. 

\begin{theo}\label{t9}
The functor ${\cal F}\mapsto {\cal H}om({\cal A}^{gp}_{X}\otimes_{{\cal O}_{X}}F_{X/S}^{*}\hat{S^{\cdot}}{\cal T}_{X'/S}, {\cal F})$ induces an equivalence of categories
\begin{equation*}
C_{\tilde{F}}: \left(
		\begin{array}{c}
		\text{The category of $\cal J$-indexed} \\
		\text{left $\hat{S^{\cdot}}{\cal T}_{X'/S}\otimes_{S^{.}{\cal T}_{X'/S}}\tilde{{\cal D}}_{X/S}^{(m)}$-modules on $X$}
		\end{array}
	\right)
	\xrightarrow{\cong}
	\left(
		\begin{array}{c}
		\text{The category of $\cal J$-indexed} \\
		\text{${\cal B}^{(m+1)}_{X/S}\otimes_{{\cal O}_{X'}} \hat{S^{\cdot}}{\cal T}_{X'/S}$-modules on $X'$}
		\end{array}
	\right)
\end{equation*}
with a quasi-inverse $C_{\tilde{F}}^{-1}: {\cal E}\mapsto {\cal A}^{gp}_{X}\otimes_{{\cal O}_{X}}F_{X/S}^{*}\hat{S^{\cdot}}{\cal T}_{X'/S}\otimes {\cal E}$.
\end{theo}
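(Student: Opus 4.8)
The plan is to deduce Theorem \ref{t9} directly from Theorem \ref{t8} together with the indexed Morita equivalence (Proposition \ref{l}). First I would apply Proposition \ref{l} to the commutative ${\cal I}^{gp}_X$-indexed ${\cal O}_X$-algebra ${\cal A}:={\cal B}^{(m+1)}_{X/S}\otimes_{{\cal O}_{X'}}\hat{S^{\cdot}}{\cal T}_{X'/S}$, the sheaf of ${\cal I}^{gp}_X$-sets ${\cal J}$, and the ${\cal A}$-module $M:={\cal A}^{gp}_X\otimes_{{\cal O}_X}F_{X/S}^*\hat{S^{\cdot}}{\cal T}_{X'/S}$. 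By Theorem \ref{e} (applied after the identification ${\cal A}^{gp}_X\otimes_{{\cal O}_X}F_{X/S}^*\hat{S^{\cdot}}{\cal T}_{X'/S}\xrightarrow{\cong}{\cal A}^{gp}_X\otimes_{{\cal O}_{X'}}\hat{S^{\cdot}}{\cal T}_{X'/S}$), $M$ is a locally free ${\cal I}^{gp}_X$-indexed ${\cal A}$-module of finite rank $p^{r(m+1)}$, so Proposition \ref{l} applies and yields an equivalence between ${\cal J}$-indexed ${\cal A}$-modules and ${\cal J}$-indexed left ${\cal E}nd_{\cal A}(M)$-modules, given by ${\cal E}\mapsto M\otimes_{\cal A}{\cal E}$ with quasi-inverse ${\cal F}\mapsto {\cal H}om(M,{\cal F})$.

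Next I would invoke Theorem \ref{t8}, which identifies ${\cal E}nd_{\cal A}(M)$ with $\left({\cal B}_{X/S}^{(m+1)}\otimes_{{\cal O}_{X'}}\hat{S^{\cdot}}{\cal T}_{X'/S}\right)\otimes_{{\cal B}_{X/S}^{(m+1)}\otimes_{{\cal O}_{X'}}S^{\cdot}{\cal T}_{X'/S}}\tilde{{\cal D}}^{(m)}_{X/S}$ as ${\cal I}^{gp}_X$-indexed algebras. The remaining bookkeeping is to recognize this last algebra as $\hat{S^{\cdot}}{\cal T}_{X'/S}\otimes_{S^{\cdot}{\cal T}_{X'/S}}\tilde{{\cal D}}_{X/S}^{(m)}$: indeed $\tilde{{\cal D}}^{(m)}_{X/S}={\cal A}^{gp}_X\otimes_{{\cal O}_X}{\cal D}^{(m)}_{X/S}$, the center contains ${\cal B}_{X/S}^{(m+1)}\otimes_{{\cal O}_{X'}}S^{\cdot}{\cal T}_{X'/S}$ by Theorem \ref{t1}(1), and the scalar extension along ${\cal B}_{X/S}^{(m+1)}\otimes_{{\cal O}_{X'}}S^{\cdot}{\cal T}_{X'/S}\to{\cal B}_{X/S}^{(m+1)}\otimes_{{\cal O}_{X'}}\hat{S^{\cdot}}{\cal T}_{X'/S}$ agrees with the scalar extension along $S^{\cdot}{\cal T}_{X'/S}\to\hat{S^{\cdot}}{\cal T}_{X'/S}$ after absorbing the ${\cal B}_{X/S}^{(m+1)}$ and ${\cal A}^{gp}_X$ factors; this is the same identification already used in the proof of Theorem \ref{t8}. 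Transporting the equivalence of Proposition \ref{l} across this algebra isomorphism then gives precisely the functor $C_{\tilde F}\colon{\cal F}\mapsto{\cal H}om({\cal A}^{gp}_X\otimes_{{\cal O}_X}F_{X/S}^*\hat{S^{\cdot}}{\cal T}_{X'/S},{\cal F})$ between ${\cal J}$-indexed left $\hat{S^{\cdot}}{\cal T}_{X'/S}\otimes_{S^{\cdot}{\cal T}_{X'/S}}\tilde{{\cal D}}_{X/S}^{(m)}$-modules and ${\cal J}$-indexed ${\cal B}^{(m+1)}_{X/S}\otimes_{{\cal O}_{X'}}\hat{S^{\cdot}}{\cal T}_{X'/S}$-modules, with quasi-inverse $C_{\tilde F}^{-1}\colon{\cal E}\mapsto{\cal A}^{gp}_X\otimes_{{\cal O}_X}F_{X/S}^*\hat{S^{\cdot}}{\cal T}_{X'/S}\otimes{\cal E}$, as asserted.

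I expect the main obstacle to be almost entirely notational rather than mathematical: one must be careful that the two descriptions of the split Azumaya algebra — the endomorphism algebra side and the ${\cal D}$-module side — are matched by the \emph{same} isomorphism produced in Theorem \ref{t8}, so that the Morita functor is literally ${\cal H}om$ of the prescribed module and not merely an abstract equivalence; and one must confirm that "${\cal J}$-indexed left ${\cal E}nd_{\cal A}(M)$-module" coincides with "${\cal J}$-indexed left $\hat{S^{\cdot}}{\cal T}_{X'/S}\otimes_{S^{\cdot}{\cal T}_{X'/S}}\tilde{{\cal D}}_{X/S}^{(m)}$-module" as categories (not just that the algebras are isomorphic). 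Both points are formal consequences of what is already established, so the proof is short. In the write-up I would simply say: apply Proposition \ref{l} with ${\cal A}$, ${\cal J}$, $M$ as above, then rewrite ${\cal E}nd_{\cal A}(M)$ using Theorem \ref{t8} and the identification of algebras noted above, and read off $C_{\tilde F}$ and $C_{\tilde F}^{-1}$.
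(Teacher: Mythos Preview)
Your proposal is correct and matches the paper's approach exactly: the paper does not even give a separate proof of Theorem \ref{t9}, but simply states ``By virtue of Proposition \ref{l}, we now obtain our main theorem,'' deducing it immediately from Theorem \ref{t8} and the indexed Morita equivalence. Your verification that $M$ is locally free via Theorem \ref{e} and your identification of the two forms of the algebra are the same bookkeeping steps carried out (more tersely) inside the proof of Theorem \ref{t8} itself.
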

\begin{rem}
Theorem \ref{t9} is regarded as a generalization of [OV, Theorem 2.8] and [GLQ, Theorem 5.8] to the case of log schemes and that of [S, Theorem 4.16] to the case of higher level.
\end{rem}

Let us describe the ${\cal D}_{X/S}^{(m)}$-action on the Cartier transform $C_{\tilde{F}}^{-1}\left({\cal E} \right)$ with a $\cal J$-indexed ${\cal B}^{(m+1)}_{X/S}\otimes_{{\cal O}_{X'}} \hat{S^{\cdot}}{\cal T}_{X'/S}$-module $\cal E$ explicitly.
Note that $\cal E$ can be regarded as the $\cal J$-indexed ${\cal B}^{(m+1)}_{X/S}$-module $\cal E$ with the morphism of indexed algebras
\begin{equation*}
\theta: \hat{S^{\cdot}}{\cal T}_{X'/S}\to {\cal E}nd_{{\cal B}^{(m+1)}_{X/S}}({\cal E}).
\end{equation*}
As a ${\cal J}$-indexed ${\cal A}^{gp}_{X}$-module, $C_{\tilde{F}}^{-1}\left({\cal E} \right)$ is isomorphic to ${\cal A}^{gp}_{X}\otimes_{{\cal B}^{(m+1)}_{X/S}}{\cal E}$.
We want to describe the ${\cal D}_{X/S}^{(m)}$-action on ${\cal A}^{gp}_{X}\otimes_{{\cal B}^{(m+1)}_{X/S}}{\cal E}$ by using $\theta$.
Let $\left\{m_{i}| 1\leq i\leq r\right\}$ of $X\to S$ be a logarithmic system of coordinates of $X\to S$.
Let $\{\underline{\partial}_{\langle \underline{k}\rangle} \}$, $\left\{\xi_{i}^{'}\bigl| 1\leq i\leq r\right\}$ and $\{\tilde{g}_{i}\}$ be as in Proposition \ref{p3}.
Then $\underline{\partial}_{\langle p^{s}\underline{\varepsilon}_{i}\rangle}$ with $0\leq s\leq m$ and $1\leq i\leq r$ acts on $1\otimes1\otimes e\in {\cal A}^{gp}_{X}\otimes_{{\cal O}_{X}}F_{X/S}^{*}\hat{S^{\cdot}}{\cal T}_{X'/S}\otimes {\cal E}$ by
\begin{eqnarray*}
\underline{\partial}_{\langle p^{s}\underline{\varepsilon}_{i}\rangle}. (1\otimes1\otimes e) &=& 1\otimes \Phi_{n}\left(\underline{\partial}_{\langle p^{s}\underline{\varepsilon}_{i}\rangle}\right)\otimes e\\
\end{eqnarray*}
for a sufficiently large $n\in {\mathbb N}$.
Therefore, by Proposition \ref{p3}, we have the following formulas.
\begin{theo}\label{t12}
Under the notation above, $\underline{\partial}_{\langle p^{s}\underline{\varepsilon}_{i}\rangle}$ acts on $1\otimes e\in {\cal A}^{gp}_{X}\otimes_{{\cal B}^{(m+1)}_{X/S}}{\cal E}$ by
\begin{equation*}
\underline{\partial}_{\langle p^{s}\underline{\varepsilon}_{i}\rangle}. (1\otimes e)=\left\{
		\begin{array}{c}
		\text{$0$} \\
		\text{$1\otimes\left\{ \left(1-\partial_{i}(g_{i})^{p^{m}}\right)\theta\left(\xi'_{i}\right)(e) -\sum_{j=1, j\neq i}^{r}\partial_{j}(g_{j})^{p^{m}}\theta\left(\xi'_{j}\right)(e)\right\}$}
		\end{array}
		\right.
		\begin{array}{l}
		\text{if \,\,\,\,\,$s<m$}\\
		\text{if \,\,\,\,\,$s=m$.}
		\end{array}
\end{equation*}

\end{theo}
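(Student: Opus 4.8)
The plan is to read the ${\cal D}^{(m)}_{X/S}$-action directly off the splitting isomorphism of Theorem~\ref{t8} through the Morita equivalence, thereby reducing everything to the computation of $\Phi_n$ already carried out in Proposition~\ref{p3}.

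First I would make the shape of $C_{\tilde F}^{-1}({\cal E})$ explicit. By Proposition~\ref{l} applied to the locally free ${\cal B}^{(m+1)}_{X/S}\otimes_{{\cal O}_{X'}}\hat{S^{\cdot}}{\cal T}_{X'/S}$-module ${\cal A}^{gp}_{X}\otimes_{{\cal O}_{X}}F_{X/S}^{*}\hat{S^{\cdot}}{\cal T}_{X'/S}$ (locally free by Theorem~\ref{e}), $C_{\tilde F}^{-1}({\cal E})$ is $\bigl({\cal A}^{gp}_{X}\otimes_{{\cal O}_{X}}F_{X/S}^{*}\hat{S^{\cdot}}{\cal T}_{X'/S}\bigr)\otimes_{{\cal B}^{(m+1)}_{X/S}\otimes_{{\cal O}_{X'}}\hat{S^{\cdot}}{\cal T}_{X'/S}}{\cal E}$, equipped with the left $\tilde{{\cal D}}^{(m)}_{X/S}$-action transported through the isomorphism of Theorem~\ref{t8}; restricting along ${\cal D}^{(m)}_{X/S}\hookrightarrow\tilde{{\cal D}}^{(m)}_{X/S}$ gives its ${\cal D}^{(m)}_{X/S}$-module structure. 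Using ${\cal A}^{gp}_{X}\otimes_{{\cal O}_{X}}F_{X/S}^{*}\hat{S^{\cdot}}{\cal T}_{X'/S}\cong{\cal A}^{gp}_{X}\otimes_{{\cal O}_{X'}}\hat{S^{\cdot}}{\cal T}_{X'/S}\cong{\cal A}^{gp}_{X}\otimes_{{\cal B}^{(m+1)}_{X/S}}\bigl({\cal B}^{(m+1)}_{X/S}\otimes_{{\cal O}_{X'}}\hat{S^{\cdot}}{\cal T}_{X'/S}\bigr)$, the tensor product over ${\cal B}^{(m+1)}_{X/S}\otimes_{{\cal O}_{X'}}\hat{S^{\cdot}}{\cal T}_{X'/S}$ absorbs the $\hat{S^{\cdot}}{\cal T}_{X'/S}$-factor into the structure morphism $\theta$; thus $C_{\tilde F}^{-1}({\cal E})\cong{\cal A}^{gp}_{X}\otimes_{{\cal B}^{(m+1)}_{X/S}}{\cal E}$ as a ${\cal J}$-indexed ${\cal A}^{gp}_{X}$-module, and under this isomorphism the class of $1\otimes t\otimes e$ with $t\in\hat{S^{\cdot}}{\cal T}_{X'/S}$ corresponds to $1\otimes\theta(t)(e)$. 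This is the bookkeeping step; it is elementary, but must be done with care since three tensor factors and the two rings ${\cal O}_{X'}\subset{\cal B}^{(m+1)}_{X/S}$ are in play.

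Next I would use the action on the unit section. The ${\cal D}^{(m)}_{X/S}$-action on $F_{X/S}^{*}\hat{S^{\cdot}}{\cal T}_{X'/S}$ is ${\cal O}_{X}$-dual to the $m$-PD stratification $\{\rho_n\}$ of Proposition~\ref{p1}, which is built from the divided Frobenius maps $\Psi_n$; since pairing an operator $P\in{\cal D}^{(m)}_{X/S,n}$ against $\Psi_n$ is exactly the definition of $\Phi_n$, evaluation on the unit section gives $P.(1\otimes1\otimes e)=1\otimes\Phi_n(P)\otimes e$ for every $n$ with $P\in{\cal D}^{(m)}_{X/S,n}$. Taking $P=\underline{\partial}_{\langle p^{s}\underline{\varepsilon}_{i}\rangle}$, which lies in ${\cal D}^{(m)}_{X/S,n}$ for $n\geq p^{s}$, and feeding in Proposition~\ref{p3} for $n\geq p^{m}$: when $s<m$ we have $0<p^{s}<p^{m}$, so $\Phi_n(\underline{\partial}_{\langle p^{s}\underline{\varepsilon}_{i}\rangle})=0$; when $s=m$ we have $\Phi_n(\underline{\partial}_{\langle p^{m}\underline{\varepsilon}_{i}\rangle})=\bigl(1-\partial_{i}(g_{i})^{p^{m}}\bigr)\xi'_{i}-\sum_{j\neq i}\partial_{j}(g_{j})^{p^{m}}\xi'_{j}$ in $F_{X/S}^{*}\hat{S^{\cdot}}{\cal T}_{X'/S}$.

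Finally I would transport these two outcomes through the identification of the second paragraph: the coefficients in ${\cal O}_{X}$ stay as coefficients, and each $\xi'_{k}\in\hat{S^{\cdot}}{\cal T}_{X'/S}$ becomes $\theta(\xi'_{k})$ applied to $e$, which yields precisely the two cases of the statement. I expect the one genuine obstacle to be making that identification rigorous: one must check that the $\hat{S^{\cdot}}{\cal T}_{X'/S}$-module structure on $F_{X/S}^{*}\hat{S^{\cdot}}{\cal T}_{X'/S}$ entering $P.(1\otimes1\otimes e)=1\otimes\Phi_n(P)\otimes e$ (defined through the $p^{m+1}$-curvature map $\beta$ and $\Phi$) coincides with the one absorbed into $\theta$ after tensoring over ${\cal B}^{(m+1)}_{X/S}\otimes_{{\cal O}_{X'}}\hat{S^{\cdot}}{\cal T}_{X'/S}$, and that everything is compatible with the colimit over $n$; with that compatibility established, the computation is immediate.
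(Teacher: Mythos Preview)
Your proposal is correct and follows exactly the same route as the paper: the paper states just before Theorem~\ref{t12} that $\underline{\partial}_{\langle p^{s}\underline{\varepsilon}_{i}\rangle}.(1\otimes1\otimes e)=1\otimes\Phi_{n}(\underline{\partial}_{\langle p^{s}\underline{\varepsilon}_{i}\rangle})\otimes e$ and then invokes Proposition~\ref{p3}, with no further argument. Your write-up is more careful about the identification $C_{\tilde F}^{-1}({\cal E})\cong{\cal A}^{gp}_{X}\otimes_{{\cal B}^{(m+1)}_{X/S}}{\cal E}$ and the absorption of $\hat{S^{\cdot}}{\cal T}_{X'/S}$ into $\theta$, but this is bookkeeping the paper leaves implicit rather than a different idea.
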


Finally, we give a variant of the log local Cartier transform of level $0$.
Let $F: X\to X^{(m)}$ denote the $m$-th relative Frobenius of $X\to S$ (see the subsection \ref{c2}).
Let $\tilde {{\cal D}}_{X^{(m)}/S}^{(0)}$ denote the ${\cal I}_{X}^{gp}$-indexed ${\cal O}_{X^{(m)}}$-module ${\cal B}_{X/S}^{(m)}\otimes_{{\cal O}_{X^{(m)}}}{\cal D}_{X^{(m)}/S}^{(0)}$.
Then there exists a natural ${\cal I}_{X}^{gp}$-indexed ${\cal O}_{X^{(m)}}$-algebra structure on $\tilde {{\cal D}}_{X^{(m)}/S}^{(0)}$, for more details see [M, Lemma 4.2.1].
Note that $\tilde {{\cal D}}_{X^{(m)}/S}^{(0)}$ is not equal to ${\cal A}^{gp}_{X^{(m)}}\otimes_{{\cal O}_{X^{(m)}}}{\cal D}_{X^{(m)}/S}^{(0)}$ since ${\cal B}_{X/S}^{(m)}\neq {\cal A}^{gp}_{X^{(m)}}$ in general (see [L, 1.8] for a counter-example).
As in the case of $\tilde {{\cal D}}_{X/S}^{(m)}$, we had the following theorem in \cite{O}.

\begin{theo}\label{t4}
Let $X\to S$ be a log smooth morphism of fine log schemes.

{\rm (1)}
${\cal B}_{X/S}^{(m)}$ is locally free as an ${\cal I}_{X}^{gp}$-indexed ${\cal B}_{X/S}^{(m+1)}$-module of rank $p^{r}$.

{\rm (2)}
Let $\tilde{\mathfrak Z'}$ denote the center of $\tilde {{\cal D}}_{X^{(m)}/S}^{(0)}$.
Then the $p$-curvature map ${\mathfrak Z}\to {\cal D}_{X^{(m)}/S}^{(0)}$ induces an isomorphism between ${\cal B}_{X/S}^{(m+1)}\otimes_{{\cal O}_{X'}}\mathfrak Z$ and $\tilde{\mathfrak Z'}$ as an indexed subalgebra of $\tilde {{\cal D}}_{X^{(m)}/S}^{(0)}$.

{\rm (3)}
The ${\cal I}_{X}^{gp}$-indexed ${\cal O}_{X^{(m)}}$-algebra $\tilde{{\cal D}}_{X^{(m)}/S}^{(0)}$ is an Azumaya algebra over its center $\tilde{\mathfrak Z'}$ of rank $p^{2r}$.
\end{theo}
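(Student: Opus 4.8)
This theorem is established in our previous paper \cite{O}; its proof runs entirely parallel to that of Theorem \ref{t1}, and I sketch the strategy. The geometric input is the factorisation $X \xrightarrow{F} X^{(m)} \xrightarrow{F_{X^{(m)}/S}} X^{(m+1)} = X'$ of the $(m+1)$-st relative Frobenius of $X/S$ into the $m$-th relative Frobenius $F$ followed by the relative Frobenius of $X^{(m)}/S$; consequently, applying Montagnon's theorem (Theorem \ref{e}, both to $F_{X/S}$ and to $F$), the indexed module ${\cal A}_X^{gp}$ is locally free of rank $p^{r(m+1)}$ over ${\cal B}^{(m+1)}_{X/S}$ and of rank $p^{rm}$ over ${\cal B}^{(m)}_{X/S}$, with ${\cal B}^{(m+1)}_{X/S} \subseteq {\cal B}^{(m)}_{X/S} \subseteq {\cal A}_X^{gp}$ as ${\cal I}_X^{gp}$-indexed subalgebras.

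For (1): the statement is \'etale-local, so one fixes a logarithmic system of coordinates of $X/S$ and uses the explicit description of ${\cal B}^{(m)}_{X/S}$ and ${\cal B}^{(m+1)}_{X/S}$ inside ${\cal A}_X^{gp}$ recalled in \cite{O} (after Montagnon) to write down an explicit indexed ${\cal B}^{(m+1)}_{X/S}$-basis of ${\cal B}^{(m)}_{X/S}$. Once local freeness is in hand, the rank is forced to equal $p^{r(m+1)}/p^{rm}=p^{r}$ by multiplicativity of ranks along ${\cal B}^{(m+1)}_{X/S}\subseteq{\cal B}^{(m)}_{X/S}\subseteq{\cal A}_X^{gp}$ together with the two instances of Theorem \ref{e}.

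For (2) and (3): the level-$0$ $p$-curvature of ${\cal D}^{(0)}_{X^{(m)}/S}$ is a morphism of ${\cal O}_{X'}$-algebras ${\mathfrak Z}\to{\cal D}^{(0)}_{X^{(m)}/S}$ onto a central subalgebra, where, since $(X^{(m)})^{(1)}=X^{(m+1)}=X'$, the source ${\mathfrak Z}$ is $S^{\cdot}{\cal T}_{X'/S}$. Extending scalars along ${\cal O}_{X^{(m)}}\to{\cal B}^{(m)}_{X/S}$ and using the Leibniz-type formulas for the $\tilde{{\cal D}}$-action on ${\cal A}_X^{gp}$ exactly as in \cite{O}, one checks that ${\cal B}^{(m+1)}_{X/S}\otimes_{{\cal O}_{X'}}{\mathfrak Z}$ lands in the centre of $\tilde{{\cal D}}^{(0)}_{X^{(m)}/S}$; that it exhausts the centre and that the map is injective is verified \'etale-locally on the ${\cal B}^{(m)}_{X/S}$-basis of $\tilde{{\cal D}}^{(0)}_{X^{(m)}/S}$, the crucial point being that ${\cal B}^{(m+1)}_{X/S}$ is the commutant of ${\cal D}^{(0)}_{X^{(m)}/S}$ inside ${\cal B}^{(m)}_{X/S}$ (the indexed form of ``${\cal O}_{X'}$ is the commutant of ${\cal D}^{(0)}_{X^{(m)}/S}$ in ${\cal O}_{X^{(m)}}$''), which gives (2). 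For (3) one argues as in \cite{O}: since the claim is \'etale-local, one may assume a lifting modulo $p^2$ of the absolute Frobenius of $X$, hence of $F_{X^{(m)}/S}$, is available (Remark \ref{r2}); such a lifting produces a splitting of $\tilde{{\cal D}}^{(0)}_{X^{(m)}/S}$ over a faithfully flat scalar extension of $\tilde{\mathfrak Z'}$ built from ${\cal A}_X^{gp}$ and Theorem \ref{e}, and a count of local ranks (using (1) and (2)) shows that $\tilde{{\cal D}}^{(0)}_{X^{(m)}/S}$ becomes the full endomorphism indexed algebra of a locally free module of rank $p^{r}$, i.e. it is Azumaya over $\tilde{\mathfrak Z'}$ of rank $p^{2r}$. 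The main obstacle is the \'etale-local bookkeeping in (2): one has to follow the indexed gradings and the explicit action of the operators $\underline{\partial}_{\langle\underline{k}\rangle}$ on ${\cal A}_X^{gp}$ closely enough to exclude central elements lying outside ${\cal B}^{(m+1)}_{X/S}\otimes_{{\cal O}_{X'}}{\mathfrak Z}$ --- exactly the place where naive logarithmic Cartier descent fails and the indexed formalism becomes indispensable.
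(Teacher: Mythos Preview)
Your proposal is correct and matches the paper's approach: the paper's own proof consists solely of citations to the previous work \cite{O} (specifically [O, Proposition~4.14] for (1) and [O, Theorem~4.21] for (2) and (3)), and you likewise defer to \cite{O} while adding a sketch of the strategy used there. The sketch is a fair summary; the only minor deviation is in (3), where the Azumaya property in \cite{O} is established directly from the natural $\tilde{{\cal D}}^{(0)}_{X^{(m)}/S}$-action on a locally free module of rank $p^r$ over the center (via (1) and (2)), without invoking a lifting modulo $p^2$.
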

\begin{proof}
See [O, Proposition 4.14] for the proof of (1).
See [O, Theorem 4.21] for the proof of (2) and (3).
\end{proof}
Let $\tilde{F'}$ be a lifting of the first relative Frobenius $F': X^{(m)}\to X^{(m+1)}=X'$ of $X^{(m)}\to S$.
Then, by the similar argument in  Theorem \ref{t8}, one can see that there exists an isomorphism of ${\cal I}^{gp}_{X}$-indexed algebras
\begin{equation*}
{\cal B}_{X/S}^{(m)}\otimes_{{\cal O}_{X'}}\hat{S^{\cdot}}{\cal T}_{X'/S} \otimes_{{\cal B}_{X/S}^{(m+1)}\otimes_{{\cal O}_{X'}}S^{\cdot}{\cal T}_{X'/S}}\tilde{{\cal D}}^{(0)}_{X^{(m)}/S}\xrightarrow{\cong}
{\cal E}nd_{{\cal B}_{X/S}^{(m+1)}\otimes_{{\cal O}_{X'}}\hat{S^{\cdot}}{\cal T}_{X'/S}}\left({\cal B}_{X/S}^{(m)}\otimes_{{\cal O}_{X^{(m)}}}F'^{*} \hat{S^{\cdot}}{\cal T}_{X'/S}\right)
\end{equation*}
by Theorem \ref{t4} (1), (2) and (3).
Then we have a variant of the log local Cartier transform of level $0$ (see also [S, Theorem 4.16]).
\begin{theo}
The functor ${\cal F}\mapsto {\cal H}om({\cal B}^{(m)}_{X/S}\otimes_{{\cal O}_{X^{(m)}}}F'^{*}\hat{S^{\cdot}}{\cal T}_{X'/S}, {\cal F})$ induces an equivalence of categories
\begin{equation*}
C'_{\tilde{F'}}: \left(
		\begin{array}{c}
		\text{The category of $\cal J$-indexed} \\
		\text{left $\hat{S^{\cdot}}{\cal T}_{X'/S}\otimes_{S^{.}{\cal T}_{X'/S}}\tilde{{\cal D}}_{X^{(m)}/S}^{(0)}$-modules on $X^{(m)}$}
		\end{array}
	\right)
	\xrightarrow{\cong}
	\left(
		\begin{array}{c}
		\text{The category of $\cal J$-indexed} \\
		\text{${\cal B}^{(m+1)}_{X/S}\otimes_{{\cal O}_{X'}} \hat{S^{\cdot}}{\cal T}_{X'/S}$-modules on $X'$}
		\end{array}
	\right)
\end{equation*}
with a quasi-inverse $C_{\tilde{F}}^{-1}: {\cal E}\mapsto {\cal B}^{(m)}_{X/S}\otimes_{{\cal O}_{X^{(m)}}}F'^{*}\hat{S^{\cdot}}{\cal T}_{X'/S}\otimes {\cal E}$.
\end{theo}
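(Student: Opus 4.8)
The plan is to derive this equivalence as a direct consequence of the preceding splitting isomorphism together with the indexed Morita equivalence of Proposition \ref{l}, exactly as Theorem \ref{t9} is deduced from Theorem \ref{t8}. First I would observe that the displayed isomorphism preceding the statement exhibits $\tilde{{\cal D}}^{(0)}_{X^{(m)}/S}$ (after the indicated scalar extension to ${\cal B}_{X/S}^{(m+1)}\otimes_{{\cal O}_{X'}}\hat{S^{\cdot}}{\cal T}_{X'/S}$) as the indexed endomorphism algebra of the module $M:={\cal B}^{(m)}_{X/S}\otimes_{{\cal O}_{X^{(m)}}}F'^{*}\hat{S^{\cdot}}{\cal T}_{X'/S}$, which by Theorem \ref{t4}~(1) (tensored with $\hat{S^{\cdot}}{\cal T}_{X'/S}$) is locally free of finite rank over the commutative ${\cal I}^{gp}_{X}$-indexed algebra ${\cal C}:={\cal B}_{X/S}^{(m+1)}\otimes_{{\cal O}_{X'}}\hat{S^{\cdot}}{\cal T}_{X'/S}$. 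Here $F':X^{(m)}\to X^{(m+1)}=X'$ is the first relative Frobenius and $\tilde{F'}$ a chosen lifting modulo $p^{2}$, which by the remark after Definition \ref{d1} (the level-zero case) is automatically a log strong lifting, so the construction of the displayed isomorphism goes through.

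Next I would apply Proposition \ref{l} with ${\cal A}$ replaced by ${\cal C}$, with the sheaf of ${\cal I}^{gp}_{X}$-sets ${\cal J}$ as given, and with the locally free finite-rank module $M$ above; its endomorphism algebra ${\cal E}={\cal E}nd_{{\cal C}}(M)$ is identified, via the displayed splitting isomorphism, with the scalar extension ${\cal C}\otimes_{{\cal B}_{X/S}^{(m+1)}\otimes_{{\cal O}_{X'}}S^{\cdot}{\cal T}_{X'/S}}\tilde{{\cal D}}^{(0)}_{X^{(m)}/S}$. Morita equivalence then yields that ${\cal F}\mapsto {\cal H}om(M,{\cal F})$ is an equivalence from the category of ${\cal J}$-indexed left ${\cal E}$-modules to the category of ${\cal J}$-indexed ${\cal C}$-modules, with quasi-inverse ${\cal E}\mapsto M\otimes_{{\cal C}}{\cal E}$. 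It remains only to note that a ${\cal J}$-indexed left ${\cal E}$-module is the same thing as a ${\cal J}$-indexed left $\hat{S^{\cdot}}{\cal T}_{X'/S}\otimes_{S^{.}{\cal T}_{X'/S}}\tilde{{\cal D}}^{(0)}_{X^{(m)}/S}$-module, since the scalar extension defining ${\cal E}$ is precisely this algebra (one uses Theorem \ref{t4}~(2), identifying the center and the relevant central scalar extension, just as in the proof of Theorem \ref{t9}); this matches the source category in the statement and gives the claimed functor ${\cal F}\mapsto {\cal H}om({\cal B}^{(m)}_{X/S}\otimes_{{\cal O}_{X^{(m)}}}F'^{*}\hat{S^{\cdot}}{\cal T}_{X'/S},{\cal F})$ and quasi-inverse.

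The proof is genuinely short given the machinery already in place; the only point demanding care is the bookkeeping of which category the ${\cal J}$-indexed ${\cal E}$-modules correspond to, i.e.\ checking that the scalar extension appearing in the displayed isomorphism over ${\cal C}$ really is the algebra $\hat{S^{\cdot}}{\cal T}_{X'/S}\otimes_{S^{.}{\cal T}_{X'/S}}\tilde{{\cal D}}^{(0)}_{X^{(m)}/S}$ named in the statement — this uses Theorem \ref{t4}~(2) to match the base ring ${\cal B}_{X/S}^{(m+1)}\otimes_{{\cal O}_{X'}}S^{\cdot}{\cal T}_{X'/S}$ of the tensor product with the center of $\tilde{{\cal D}}^{(0)}_{X^{(m)}/S}$. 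I expect this identification to be the main (and essentially only) obstacle, and it is resolved in the same way as in Theorem \ref{t9}; everything else is a formal invocation of Proposition \ref{l} and the preceding displayed splitting isomorphism. One may therefore present the proof simply as: ``This follows from Proposition \ref{l} applied to the splitting isomorphism displayed above, exactly as Theorem \ref{t9} follows from Theorem \ref{t8}.''
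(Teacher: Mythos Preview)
Your proposal is correct and follows exactly the approach the paper intends: the paper does not give a separate proof of this theorem but simply states it immediately after the displayed splitting isomorphism, relying on the indexed Morita equivalence (Proposition~\ref{l}) in the same way Theorem~\ref{t9} is deduced from Theorem~\ref{t8}. Your identification of the ingredients---Theorem~\ref{t4}~(1) for local freeness of $M$ over ${\cal C}$, Theorem~\ref{t4}~(2) for the center, and Proposition~\ref{l} for the Morita step---is precisely what the paper has in mind.
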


\section{Frobenius descent}\label{s5}
In this section, we discuss the compatibility of the log local Cartier transform and the log Frobenius descent.
Let us first briefly recall Montagnon's log Frobenius descent.
Let $X\to S$ be a log smooth morphism of fine log schemes defined over ${\mathbb Z}/p{\mathbb Z}$.
Let $F: X\to X^{(m)}$ be the $m$-th relative Frobenius of $X\to S$.
We fix a sheaf of ${\cal I}^{gp}_{X}$-sets $\cal J$ and let $\cal F$ be a $\cal J$-indexed left $\tilde {{\cal D}}_{X^{(m)}/S}^{(0)}$-module.
We consider ${\cal J}$-indexed sheaf defined by
	\begin{equation*}
	{\mathbb G}_{\cal J}({\cal F}):={\cal A}_{X}^{gp}\otimes_{F^{*}{\cal B}_{X/S}^{(m)}} F^{*}{\cal F} .
	\end{equation*}
Then we can endow ${\mathbb G}_{\cal J}({\cal F})$ with a ${\cal J}$-indexed $\tilde{\cal D}_{X/S}^{(m)}$-module structure and obtain a functor
\begin{equation*}
{\mathbb G}_{\cal J}:
 \left(
\begin{array}{c}
\text{The category of} \\
{\cal J}\text{-indexed left } \tilde {{\cal D}}_{X^{(m)}/S}^{(0)} \text{-modules}

\end{array}
\right)
\to 
\left(
\begin{array}{c}
\text{The category of} \\
{\cal J}\text{-indexed left } \tilde {{\cal D}}_{X/S}^{(m)} \text{-modules}
\end{array}
\right).
\end{equation*}
For more details for the construction of ${\mathbb G}_{\cal J}$ , we refer the reader to the subsection 6.1 in \cite{O}.
	\begin{theo}
	${\mathbb G}_{\cal J}$ is an equivalence of categories.
	\end{theo}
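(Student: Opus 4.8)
The plan is to show that $\mathbb{G}_{\cal J}$ is fully faithful and essentially surjective, by exhibiting an explicit quasi-inverse built from Montagnon's ${\cal B}^{(m)}_{X/S}$-structure. The key input is Theorem \ref{t4}: ${\cal B}^{(m)}_{X/S}$ is locally free of rank $p^{r}$ over ${\cal B}^{(m+1)}_{X/S}$, and the $p$-curvature map identifies the center $\tilde{\mathfrak Z'}$ of $\tilde{{\cal D}}^{(0)}_{X^{(m)}/S}$ with ${\cal B}^{(m+1)}_{X/S}\otimes_{{\cal O}_{X'}}\mathfrak Z$. First I would reduce the problem to a statement purely about indexed modules over indexed algebras: since ${\cal A}^{gp}_{X}$ is locally free of rank $p^{r(m+1)}$ over ${\cal B}^{(m+1)}_{X/S}$ (Theorem \ref{e}) and $F^{*}{\cal B}^{(m)}_{X/S}$ sits between $F^{*}{\cal B}^{(m+1)}_{X/S}$ and ${\cal A}^{gp}_{X}$, the functor ${\mathbb G}_{\cal J}$ on underlying indexed modules is ${\cal F}\mapsto {\cal A}^{gp}_{X}\otimes_{F^{*}{\cal B}^{(m)}_{X/S}}F^{*}{\cal F}$, a composite of a flat base change along $F$ with a Morita-type induction along the locally free extension $F^{*}{\cal B}^{(m)}_{X/S}\hookrightarrow {\cal A}^{gp}_{X}$.

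The main structural step is to observe that both sides are module categories over Azumaya algebras with the \emph{same} center, and that ${\mathbb G}_{\cal J}$ is compatible with this center identification. Concretely, the equivalences of categories of ${\cal J}$-indexed modules over an Azumaya algebra and over its center (via the indexed Morita equivalence, Proposition \ref{l}, applied to a splitting module) should be invoked for both $\tilde{{\cal D}}^{(m)}_{X/S}$ over $\tilde{\mathfrak Z}={\cal B}^{(m+1)}_{X/S}\otimes_{{\cal O}_{X'}}S^{\cdot}{\cal T}_{X'/S}$ and $\tilde{{\cal D}}^{(0)}_{X^{(m)}/S}$ over $\tilde{\mathfrak Z'}$, after choosing compatible local splittings; but since we want a result valid globally and independent of any lifting, the cleaner route is to produce the quasi-inverse directly. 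I would define
\begin{equation*}
{\mathbb H}_{\cal J}({\cal G}):={\cal H}om_{F^{*}{\cal B}^{(m)}_{X/S}}\!\left({\cal A}^{gp}_{X},{\cal G}\right),
\end{equation*}
descended along $F$, for a ${\cal J}$-indexed left $\tilde{{\cal D}}^{(m)}_{X/S}$-module ${\cal G}$, and check that it lands in ${\cal J}$-indexed $\tilde{{\cal D}}^{(0)}_{X^{(m)}/S}$-modules. The unit and counit of the adjunction $({\mathbb G}_{\cal J},{\mathbb H}_{\cal J})$ are isomorphisms because, locally, ${\cal A}^{gp}_{X}$ is a locally free $F^{*}{\cal B}^{(m)}_{X/S}$-module of finite rank which is faithfully flat, so the Morita argument of Proposition \ref{l} applies after one checks that the ${\cal D}$-module structures are the ones transported by these functors — this is where the explicit description of the $\tilde{{\cal D}}^{(m)}_{X/S}$-action on ${\mathbb G}_{\cal J}({\cal F})$ from subsection 6.1 of \cite{O} is used.

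The hard part will be verifying that the functor $\mathbb{H}_{\cal J}$ (or whatever quasi-inverse one writes down) genuinely produces a $\tilde{{\cal D}}^{(0)}_{X^{(m)}/S}$-module structure, i.e.\ that the full higher-level differential operator action on $\mathbb{G}_{\cal J}(\cal F)$ restricts correctly and that the $p^{m+1}$-curvature matches up with the $p$-curvature of the descended module — this is precisely the point where the logarithmic subtlety (failure of naive Cartier descent, and the need for the indexed algebras ${\cal A}^{gp}_{X}$, ${\cal B}^{(m)}_{X/S}$ rather than ${\cal O}_{X}$, ${\cal O}_{X^{(m)}}$) bites. I expect the bookkeeping to follow Montagnon's framework closely, so the proof is mostly a matter of assembling: (i) the local freeness/faithful flatness of $F^{*}{\cal B}^{(m)}_{X/S}\hookrightarrow {\cal A}^{gp}_{X}$ from Theorem \ref{t4}(1) and Theorem \ref{e}; (ii) the indexed Morita equivalence of Proposition \ref{l}; (iii) the compatibility of the ${\cal D}$-module structures recalled from \cite{O}. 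I would organize the write-up as: construct $\mathbb{H}_{\cal J}$; check it is well-defined on the ${\cal D}$-module level; exhibit adjunction unit and counit; prove they are isomorphisms by reducing étale-locally to the Morita statement, using Remark \ref{r2} to pass to a local situation where everything is explicit.
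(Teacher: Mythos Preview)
The paper gives no argument here; its entire proof is the sentence ``For a proof, see Th\'eor\`eme 4.2.1 of \cite{M}.'' So there is nothing in this paper to compare your sketch against --- the content lives entirely in Montagnon's thesis, which proceeds by a direct comparison of $m$-PD stratifications on $X$ with $0$-PD stratifications on $X^{(m)}$ through the morphism of PD envelopes (the map $F_{\Delta}$ that appears later in Section~\ref{s5}).

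Your sketch, however, has a genuine gap. You propose to show that the unit and counit of $(\mathbb{G}_{\cal J},\mathbb{H}_{\cal J})$ are isomorphisms by invoking Proposition~\ref{l} together with the local freeness of ${\cal A}^{gp}_{X}$ over ${\cal B}^{(m)}_{X/S}$. But Proposition~\ref{l} gives an equivalence between ${\cal A}$-modules and ${\cal E}nd_{\cal A}(M)$-modules, not between ${\cal A}$-modules and $M$-modules. At the level of underlying indexed modules, the functor ${\cal F}\mapsto {\cal A}^{gp}_{X}\otimes_{{\cal B}^{(m)}_{X/S}}{\cal F}$ is emphatically \emph{not} an equivalence from ${\cal B}^{(m)}_{X/S}$-modules to ${\cal A}^{gp}_{X}$-modules: locally it multiplies the rank by $p^{rm}$, so its right adjoint cannot undo it. The Frobenius descent becomes an equivalence only \emph{after} the ${\cal D}$-module structures are imposed on both sides; it is precisely those structures --- not local freeness or faithful flatness --- that force the unit and counit to be isomorphisms. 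Thus your ``hard part'' is not a technical verification layered on top of a Morita core; it \emph{is} the entire proof, and your outline does not indicate how to carry it out. A second caution: Theorem~\ref{e}, which you use as input, is cited from [M, Corollaire~4.2.1], the same section of Montagnon's thesis as the result you are trying to prove, so you should also verify there is no circularity.
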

\begin{proof}
For a proof, see Th\'eor\`eme 4.2.1 of \cite{M}.
\end{proof}
We call (a quasi-inverse of) ${\mathbb G}_{\cal J}$ the log Frobenius descent.
\begin{lemm}
	The functor ${\mathbb G}_{\cal J}$ induces an equivalence of categories between 	
	\begin{equation*}
{\mathbb G}_{\cal J}:
\left(
\begin{array}{c}
\text{The category of } {\cal J}\text{-indexed left } \\
 \hat{S^{\cdot}}{\cal T}_{X'/S}\otimes_{S^{.}{\cal T}_{X'/S}} \tilde {{\cal D}}_{X^{(m)}/S}^{(0)} \text{-modules}
\end{array}
\right)
\to
 \left(
\begin{array}{c}
\text{The category of } {\cal J}\text{-indexed left }\\
 \hat{S^{\cdot}}{\cal T}_{X'/S}\otimes_{S^{.}{\cal T}_{X'/S}}\tilde{{\cal D}}_{X/S}^{(m)} \text{-modules}
\end{array}
\right).
\end{equation*}	
\end{lemm}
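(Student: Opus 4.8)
The plan is to realize the asserted equivalence simply as the restriction of the Frobenius descent ${\mathbb G}_{\cal J}$ to suitable \emph{full} subcategories. Both categories in the statement are full subcategories of the categories of all ${\cal J}$-indexed left $\tilde{{\cal D}}^{(0)}_{X^{(m)}/S}$-modules, respectively all ${\cal J}$-indexed left $\tilde{{\cal D}}^{(m)}_{X/S}$-modules: since $S^{\cdot}{\cal T}_{X'/S}$ sits in the center $\tilde{\mathfrak Z'}$ of $\tilde{{\cal D}}^{(0)}_{X^{(m)}/S}$ via the $p$-curvature map of $X^{(m)}/S$, and in the center $\tilde{\mathfrak Z}$ of $\tilde{{\cal D}}^{(m)}_{X/S}$ via the $p^{m+1}$-curvature map of $X/S$ (Theorems \ref{t1} (1) and \ref{t4} (2)), a ${\cal J}$-indexed left $\hat{S^{\cdot}}{\cal T}_{X'/S}\otimes_{S^{.}{\cal T}_{X'/S}}\tilde{{\cal D}}^{(0)}_{X^{(m)}/S}$-module is nothing but a ${\cal J}$-indexed left $\tilde{{\cal D}}^{(0)}_{X^{(m)}/S}$-module whose central $S^{\cdot}{\cal T}_{X'/S}$-action extends to an $\hat{S^{\cdot}}{\cal T}_{X'/S}$-action, and likewise on the other side. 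Thus it suffices to prove that ${\mathbb G}_{\cal J}$ exchanges these two conditions.

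The essential point is that ${\mathbb G}_{\cal J}$ is compatible with the central $S^{\cdot}{\cal T}_{X'/S}$-actions. Concretely, for a ${\cal J}$-indexed left $\tilde{{\cal D}}^{(0)}_{X^{(m)}/S}$-module $\cal F$ I would show that the $p$-curvature action of $S^{\cdot}{\cal T}_{X'/S}$ on $\cal F$ corresponds, under the isomorphism ${\mathbb G}_{\cal J}({\cal F})={\cal A}_{X}^{gp}\otimes_{F^{*}{\cal B}_{X/S}^{(m)}}F^{*}{\cal F}$, to the $p^{m+1}$-curvature action of $S^{\cdot}{\cal T}_{X'/S}$ on ${\mathbb G}_{\cal J}({\cal F})$, where the two copies of $S^{\cdot}{\cal T}_{X'/S}$ are identified through the canonical isomorphisms $\tilde{\mathfrak Z'}\cong{\cal B}_{X/S}^{(m+1)}\otimes_{{\cal O}_{X'}}S^{\cdot}{\cal T}_{X'/S}\cong\tilde{\mathfrak Z}$ of Theorems \ref{t1} (1) and \ref{t4} (2). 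Because ${\mathbb G}_{\cal J}({\cal F})$ is generated as an ${\cal A}_{X}^{gp}$-module by the image of $F^{*}{\cal F}$, and $S^{\cdot}{\cal T}_{X'/S}$ is central in $\tilde{{\cal D}}^{(m)}_{X/S}$ so that its action commutes with that of ${\cal A}_{X}^{gp}$, it is enough to carry out the comparison on sections of the form $1\otimes F^{*}e$. Working \'etale-locally with a logarithmic system of coordinates $\{m_{i}\}$, this becomes a direct computation: on one side $\beta(\xi'_{i})=\underline{\partial}_{\langle p^{m+1}\underline{\varepsilon}_{i}\rangle}$ (Theorem \ref{t2}) acts on ${\mathbb G}_{\cal J}({\cal F})$, on the other side the level-$0$ $p$-curvature operator of $X^{(m)}/S$, pulled back by $F$, acts on $F^{*}{\cal F}$; one unwinds the definition of the $\tilde{{\cal D}}^{(m)}_{X/S}$-module structure on ${\mathbb G}_{\cal J}({\cal F})$ from the subsection 6.1 of \cite{O} and checks the two agree. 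I expect this curvature-compatibility computation to be the main obstacle, since it requires tracking how the $p^{m+1}$-curvature interacts with Montagnon's Frobenius-descent bimodule; the local freeness statements (Theorem \ref{e}, Theorem \ref{t4} (1)) should enter here.

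Granting the compatibility, the lemma follows formally. An extension of the $S^{\cdot}{\cal T}_{X'/S}$-action on $\cal F$ to $\hat{S^{\cdot}}{\cal T}_{X'/S}$ is a morphism of ${\cal I}_{X}^{gp}$-indexed algebras $\hat{S^{\cdot}}{\cal T}_{X'/S}\to {\cal E}nd({\cal F})$ restricting to the structure map on $S^{\cdot}{\cal T}_{X'/S}$; since ${\mathbb G}_{\cal J}$ is an equivalence it identifies ${\cal E}nd({\cal F})$ with ${\cal E}nd({\mathbb G}_{\cal J}({\cal F}))$ compatibly with the structure maps from $S^{\cdot}{\cal T}_{X'/S}$, so such an extension exists for $\cal F$ if and only if it exists for ${\mathbb G}_{\cal J}({\cal F})$. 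Hence ${\mathbb G}_{\cal J}$ carries the source category into the target category; applying the same reasoning to a quasi-inverse of ${\mathbb G}_{\cal J}$ shows it is moreover essentially surjective onto the target, while full faithfulness is inherited from the ambient equivalence. Therefore the induced functor is an equivalence of categories, as asserted.
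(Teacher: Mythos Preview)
Your strategy is the right one and coincides with what the paper does: the paper simply cites [O, Lemma~6.6], whose content is exactly the compatibility you isolate, namely that ${\mathbb G}_{\cal J}$ intertwines the central $S^{\cdot}{\cal T}_{X'/S}$-action via the $p$-curvature on $\tilde{{\cal D}}^{(0)}_{X^{(m)}/S}$ with the central $S^{\cdot}{\cal T}_{X'/S}$-action via the $p^{m+1}$-curvature on $\tilde{{\cal D}}^{(m)}_{X/S}$, after which the passage to $\hat{S^{\cdot}}{\cal T}_{X'/S}$-extensions is formal.

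One terminological slip worth fixing: the two categories are \emph{not} full subcategories of the ambient $\tilde{{\cal D}}$-module categories, since an extension of the central $S^{\cdot}{\cal T}_{X'/S}$-action to $\hat{S^{\cdot}}{\cal T}_{X'/S}$ is additional structure rather than a property, and morphisms must respect it. Your closing paragraph, phrasing such an extension as an algebra map $\hat{S^{\cdot}}{\cal T}_{X'/S}\to{\cal E}nd({\cal F})$ and transporting it through the equivalence, handles this correctly; just drop the ``full subcategory'' language at the start so the argument is internally consistent.
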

\begin{proof}
The proof is the same as [O, Lemma 6.6].
\end{proof}
Let us state our main result in this subsection.
Let $X\to S$ be an integral log smooth morphism of fine log schemes defined over ${\mathbb Z}/p{\mathbb Z}$
and we endow $S$ with the $m$-PD structure on the zero ideal. 
We assume that the underlying scheme $S$ is noetherian and $X\to S$ is of finite type.
Let $F$ be the $m$-th relative Frobenius of $X\to S$ and $F'$ the first relative Frobenius of $X^{(m)}\to S$.
Then $F'\circ F$ is nothing but the $(m+1)$-st relative Frobenius $F_{X/S}$ of $X\to S$.
Assume that we are given the following diagram:
\[\xymatrix{
{X} \ar[r]^{F}\ar[d] & {X^{(m)}} \ar[r]^{F'}\ar[d] & {X'}\ar[d]\\
{\tilde{X}\ar[r]} & {\tilde{X^{(m)}}} \ar[r]^{\tilde{F'}} & {\tilde{X'}},
}\]
where $\tilde{X}\to \tilde{X^{(m)}}$ and $\tilde{F'}: \tilde{X^{(m)}} \to \tilde{X'}$ are  liftings of $X\to X^{(m)}$ and $X^{(m)} \to X'$ modulo $p^{2}$ respectively.
We also assume $\tilde{F}: \tilde{X}\to \tilde{X^{(m)}}\to \tilde{X'}$ is a log strong lifting.

\begin{rem}
In the case when the absolute Frobenius $F_{S}^{*}: {\cal O}_{S}\to {\cal O}_{S}$ of $S$ is surjective, one can prove that, if we are given the above diagram, then
$\tilde{F}: \tilde{X}\to \tilde{X^{(m)}}\to \tilde{X'}$ is automatically a log strong lifting.
\end{rem}
Now we may state our main result in this subsection.

\begin{theo}\label{t3}
	Let $\cal J$ be a sheaf of ${\cal I}_{X}^{gp}$-sets on $X$.
	Then the following diagram of categories commutes:
	\[\xymatrix{
	{\left(
\begin{array}{c}
\text{The category of } {\cal J}\text{-indexed left } \\
 \hat{S^{\cdot}}{\cal T}_{X'/S}\otimes_{S^{.}{\cal T}_{X'/S}} \tilde {{\cal D}}_{X^{(m)}/S}^{(0)} \text{-modules}
\end{array}
\right)
} \ar[d]_{{\mathbb G}_{\cal J}} \ar[r]^{C'_{\tilde{F'}}} &  {\left(
\begin{array}{c}
\text{The category of } {\cal J}\text{-indexed left }\\
 {\cal B}_{X/S}^{(m+1)}\otimes_{{\cal O}_{X'}}\hat{S^{\cdot}}{\cal T}_{X'/S} \text{-modules}
\end{array}
\right)
}\\
 {\left(
\begin{array}{c}
\text{The category of } {\cal J}\text{-indexed left }\\
 \hat{S^{\cdot}}{\cal T}_{X'/S}\otimes_{S^{.}{\cal T}_{X'/S}}\tilde{{\cal D}}_{X/S}^{(m)} \text{-modules}
\end{array}
\right)}\ar[ru]_{C_{\tilde{F}}} }.
	\]
	\end{theo}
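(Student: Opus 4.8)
The strategy is to trace through all three functors on objects and verify that the two composites $C_{\tilde{F}}\circ {\mathbb G}_{\cal J}$ and $C'_{\tilde{F'}}$ agree, naturally in the input. Recall that all three functors are of the ``tensor with a locally free indexed module'' type coming from Morita equivalence: writing $M := {\cal A}^{gp}_{X}\otimes_{{\cal O}_{X}}F_{X/S}^{*}\hat{S^{\cdot}}{\cal T}_{X'/S}$, $M' := {\cal B}^{(m)}_{X/S}\otimes_{{\cal O}_{X^{(m)}}}F'^{*}\hat{S^{\cdot}}{\cal T}_{X'/S}$, and $N := {\cal A}_{X}^{gp}\otimes_{F^{*}{\cal B}_{X/S}^{(m)}}F^{*}(-)$ for the Frobenius descent, the composite $C_{\tilde{F}}\circ {\mathbb G}_{\cal J}$ sends a module ${\cal F}$ to ${\cal H}om(M, {\mathbb G}_{\cal J}({\cal F}))$ while $C'_{\tilde{F'}}$ sends it to ${\cal H}om(M', {\cal F})$. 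So the first and main step is to produce an isomorphism of splitting modules
\begin{equation*}
M \otimes_{{\cal B}_{X/S}^{(m)}\otimes_{{\cal O}_{X'}}\hat{S^{\cdot}}{\cal T}_{X'/S}}\left({\cal B}_{X/S}^{(m)}\otimes_{{\cal O}_{X^{(m)}}}F^{*}(-)\right) \;\cong\; M
\end{equation*}
compatible with the relevant algebra actions; equivalently, to show that the splitting module $M$ for $\tilde{{\cal D}}^{(m)}_{X/S}$ over ${\cal B}^{(m+1)}_{X/S}\otimes_{{\cal O}_{X'}}\hat{S^{\cdot}}{\cal T}_{X'/S}$ is obtained from the splitting module $M'$ for $\tilde{{\cal D}}^{(0)}_{X^{(m)}/S}$ by applying the Frobenius descent construction ${\mathbb G}$. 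Concretely, using Theorem \ref{t4}(1) and the definition of ${\mathbb G}_{\cal J}$, one checks
\begin{equation*}
{\cal A}_{X}^{gp}\otimes_{F^{*}{\cal B}_{X/S}^{(m)}}F^{*}\!\left({\cal B}_{X/S}^{(m)}\otimes_{{\cal O}_{X^{(m)}}}F'^{*}\hat{S^{\cdot}}{\cal T}_{X'/S}\right)\;\cong\;{\cal A}_{X}^{gp}\otimes_{{\cal O}_{X}}F_{X/S}^{*}\hat{S^{\cdot}}{\cal T}_{X'/S}\;=\;M,
\end{equation*}
since $F^{*}F'^{*} = F_{X/S}^{*}$, and the projection formula collapses the tensor over $F^{*}{\cal B}_{X/S}^{(m)}$.

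Once this isomorphism of modules is in hand, the second step is to verify it is equivariant for all structures in play: the $\tilde{{\cal D}}^{(m)}_{X/S}$-action (which on the left side is the one defined in the construction of ${\mathbb G}_{\cal J}$ out of the $\tilde{{\cal D}}^{(0)}_{X^{(m)}/S}$-action, and on the right is the $m$-PD stratification action coming from $\Psi$ in Proposition \ref{p1}), the ${\cal B}^{(m+1)}_{X/S}\otimes_{{\cal O}_{X'}}\hat{S^{\cdot}}{\cal T}_{X'/S}$-module structure, and the $\hat{S^{\cdot}}{\cal T}_{X'/S}$-scalar action. For the $\hat{S^{\cdot}}{\cal T}_{X'/S}$- and ${\cal B}^{(m+1)}$-structures this is essentially formal from how $\beta$ and the indexed algebras are built. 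The only delicate point is the ${\cal D}^{(m)}_{X/S}$-action: here I would use the explicit formulas. On the ${\mathbb G}_{\cal J}$ side, the $\tilde{{\cal D}}^{(m)}_{X/S}$-action on a descended module is built from the $\tilde{{\cal D}}^{(0)}_{X^{(m)}/S}$-action together with the canonical $m$-PD stratification on ${\cal A}^{gp}_{X}$ as an $F^{*}{\cal B}^{(m)}_{X/S}$-module; composing with $C'_{\tilde{F'}}$, the resulting action on $1\otimes e$ should match term-by-term the formula of Theorem \ref{t12}, because the log strong lifting $\tilde{F} = \tilde{F'}\circ(\tilde{X}\to\tilde{X^{(m)}})$ factors through $\tilde{X^{(m)}}$ and the functions $g_i$ attached to $\tilde{F}$ are built from those attached to $\tilde{F'}$ via the intermediate Frobenius. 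I expect one can reduce the whole equivariance check to the statement that $\Psi$ for $\tilde{F}$ factors compatibly through the analogous divided-Frobenius data for $\tilde{F'}$ at level $0$; this is a diagram chase using the geometric (universality) construction of $\Psi$ in Section 3.

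Finally, the third step is naturality in ${\cal F}$, which is automatic since every arrow is $-\mapsto{\cal H}om(\text{fixed module},-)$ or $-\mapsto(\text{fixed module})\otimes-$, and the module isomorphism of Step 1 is fixed independent of ${\cal F}$; so the $2$-isomorphism $C_{\tilde{F}}\circ{\mathbb G}_{\cal J}\Rightarrow C'_{\tilde{F'}}$ is obtained simply by ${\cal H}om$-ing the Step 1 isomorphism into each ${\cal F}$. The main obstacle is Step 2, and within it the ${\cal D}^{(m)}_{X/S}$-equivariance: one must carefully match the action coming from Montagnon's Frobenius descent (constructed via the canonical stratification and the filtration by operators of finite order) with the action constructed in Section 3 via $\Psi$ and the dual of the $p^{m+1}$-curvature map. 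I would handle this by comparing both against the explicit local formulas (Proposition \ref{p3}, Theorem \ref{t12}) in a logarithmic system of coordinates, using that a log strong lifting of the factored Frobenius induces log strong liftings of each factor with compatible $g_i$'s, and that this suffices by the faithfulness of the local description. The noetherian and finite-type hypotheses are inherited from Section 3 and cause no extra trouble.
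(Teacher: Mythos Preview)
Your proposal is correct and follows the same overall strategy as the paper: both reduce the theorem to the single lemma that ${\mathbb G}_{{\cal I}_X^{gp}}(M') \cong M$ compatibly with all structures (your Step~1 is exactly the lemma stated immediately after Theorem~\ref{t3}), and both observe that the underlying ${\cal O}_X$-module identification is tautological from $F_{X/S}=F'\circ F$.

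The only methodological difference is in how the ${\cal D}^{(m)}_{X/S}$-equivariance (your Step~2) is checked. You propose primarily a coordinate comparison via Proposition~\ref{p3} and Theorem~\ref{t12}, mentioning the geometric factorization as an alternative; the paper does the opposite and uses only the geometric argument. Namely, since Montagnon's Frobenius descent defines the level-$m$ stratification on ${\mathbb G}({\cal F})$ as the pullback of the level-$0$ stratification along the canonical morphism $F_\Delta: P_{\tilde X/\tilde S,(m)}\to P_{\tilde X^{(m)}/\tilde S,(0)}$, the whole equivariance reduces to the identity $\tilde{\Psi}_N=\tilde{\Psi}'_N\circ F_\Delta$, and this follows immediately because $\tilde{\Psi}'_N\circ F_\Delta$ visibly fits into the commutative square that uniquely characterizes $\tilde{\Psi}_N$. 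Your coordinate check would work too, but once unwound it amounts to recomputing this same factorization locally, so the universality argument is both shorter and what your explicit computation would ultimately be verifying.
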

\begin{rem}
Theorem \ref{t3} can be regarded as the log version of the result stated in Subsection 6.6 of \cite{GLQ}.
For the global version of Theorem \ref{t3}, see [O, Theorem 6.9].
\end{rem}
	
One can prove Theorem \ref{t3} in the same way as the proof of [O, Theorem 6.7]. 
It suffices to show the following lemma.
\begin{lemm}
	The image of ${\cal B}^{(m)}_{X/S}\otimes_{{\cal O}_{X^{(m)}}}F'^{*} \hat{S^{\cdot}}{\cal T}_{X'/S}$ under ${\mathbb G}_{{\cal I}_{X}^{gp}}$ is naturally isomorphic to ${\cal A}_{X}^{gp}\otimes_{{\cal O}_{X}}F_{X/S}^{*} \hat{S^{\cdot}}{\cal T}_{X'/S}$,
	that is, 
	\begin{equation*}
	{\mathbb G}_{{\cal I}_{X}^{gp}}({\cal B}^{(m)}_{X/S}\otimes_{{\cal O}_{X^{(m)}}}F'^{*} \hat{S^{\cdot}}{\cal T}_{X'/S})={\cal A}_{X}^{gp}\otimes_{{\cal O}_{X}}F_{X/S}^{*} \hat{S^{\cdot}}{\cal T}_{X'/S}.
	\end{equation*}
\end{lemm}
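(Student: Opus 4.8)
The plan is to prove the lemma by unwinding the definition of ${\mathbb G}_{{\cal I}_X^{gp}}$ and then reducing the computation to three formal properties of pullbacks and tensor products of indexed modules. Recall that for $F\colon X\to X^{(m)}$ the $m$-th relative Frobenius one has ${\mathbb G}_{{\cal I}_X^{gp}}({\cal F})={\cal A}_X^{gp}\otimes_{F^*{\cal B}_{X/S}^{(m)}}F^*{\cal F}$, where ${\cal B}_{X/S}^{(m)}$ is taken with its ${\cal O}_{X^{(m)}}$-algebra structure and ${\cal A}_X^{gp}$ is regarded as an $F^*{\cal B}_{X/S}^{(m)}$-algebra through the canonical morphism of ${\cal I}_X^{gp}$-indexed ${\cal O}_X$-algebras $F^*{\cal B}_{X/S}^{(m)}\to{\cal A}_X^{gp}$ underlying the log Frobenius descent. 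Applying this to ${\cal F}={\cal B}_{X/S}^{(m)}\otimes_{{\cal O}_{X^{(m)}}}F'^*\hat{S^{\cdot}}{\cal T}_{X'/S}$, one obtains the chain of natural isomorphisms of ${\cal I}_X^{gp}$-indexed ${\cal A}_X^{gp}$-modules
\begin{align*}
{\mathbb G}_{{\cal I}_X^{gp}}\left({\cal B}_{X/S}^{(m)}\otimes_{{\cal O}_{X^{(m)}}}F'^*\hat{S^{\cdot}}{\cal T}_{X'/S}\right)
&={\cal A}_X^{gp}\otimes_{F^*{\cal B}_{X/S}^{(m)}}F^*\left({\cal B}_{X/S}^{(m)}\otimes_{{\cal O}_{X^{(m)}}}F'^*\hat{S^{\cdot}}{\cal T}_{X'/S}\right)\\
&\cong{\cal A}_X^{gp}\otimes_{F^*{\cal B}_{X/S}^{(m)}}\left(F^*{\cal B}_{X/S}^{(m)}\otimes_{{\cal O}_X}F^*F'^*\hat{S^{\cdot}}{\cal T}_{X'/S}\right)\\
&\cong{\cal A}_X^{gp}\otimes_{{\cal O}_X}F^*F'^*\hat{S^{\cdot}}{\cal T}_{X'/S}\\
&={\cal A}_X^{gp}\otimes_{{\cal O}_X}F_{X/S}^*\hat{S^{\cdot}}{\cal T}_{X'/S},
\end{align*}
which is exactly the claimed identification.

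Spelled out, the three steps are as follows. The first isomorphism is the monoidality of the pullback functor $F^*$ on indexed modules, which commutes with the tensor product over the structure sheaf (using $F^*{\cal O}_{X^{(m)}}={\cal O}_X$); here the second factor becomes $F^*(F'^*\hat{S^{\cdot}}{\cal T}_{X'/S})$. The second isomorphism is the standard cancellation isomorphism $A\otimes_B(B\otimes_R M)\cong A\otimes_R M$, which holds verbatim for indexed algebras and modules, applied with $R={\cal O}_X$, $B=F^*{\cal B}_{X/S}^{(m)}$, $A={\cal A}_X^{gp}$ and $M=F^*F'^*\hat{S^{\cdot}}{\cal T}_{X'/S}$, using that the composite ${\cal O}_X\to F^*{\cal B}_{X/S}^{(m)}\to{\cal A}_X^{gp}$ is the structure morphism of ${\cal A}_X^{gp}$. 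The last equality is functoriality of pullback, $F^*\circ F'^*=(F'\circ F)^*=F_{X/S}^*$, since $F_{X/S}=F'\circ F$ by construction.

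Since the formal skeleton above is routine, I expect the only real work — and hence the main obstacle — to be the indexed bookkeeping: one must check that all the sheaves compared above carry compatible ${\cal I}_X^{gp}$-structures (via the identification ${\cal I}_{X^{(m)}}^{gp}\cong{\cal I}_X^{gp}$ induced by the relative Frobenius, see Subsection \ref{c2}), that pullback of indexed modules is monoidal and the cancellation isomorphism is grading-preserving, and — the point to watch — that the ${\cal O}_{X^{(m)}}$-algebra structure on ${\cal B}_{X/S}^{(m)}$ entering ${\cal B}_{X/S}^{(m)}\otimes_{{\cal O}_{X^{(m)}}}F'^*\hat{S^{\cdot}}{\cal T}_{X'/S}$ is the one which, after $F^*$, is compatible with the map $F^*{\cal B}_{X/S}^{(m)}\to{\cal A}_X^{gp}$, so that the cancellation step is legitimate and the resulting ${\cal O}_X$-module structure on ${\cal A}_X^{gp}\otimes_{{\cal O}_X}F_{X/S}^*\hat{S^{\cdot}}{\cal T}_{X'/S}$ is the standard one. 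Finally, to deduce Theorem \ref{t3} from the lemma one further checks, exactly as in the proof of [O, Theorem 6.7], that the above isomorphism is $\tilde{{\cal D}}_{X/S}^{(m)}$-linear, where the left-hand side carries the $\tilde{{\cal D}}_{X/S}^{(m)}$-module structure produced by ${\mathbb G}_{{\cal I}_X^{gp}}$ out of the $\tilde{{\cal D}}_{X^{(m)}/S}^{(0)}$-module structure on the splitting module, and the right-hand side the one coming from the $m$-PD stratification of Proposition \ref{p1} underlying Theorem \ref{t8}; granting this, the commutativity of the diagram in Theorem \ref{t3} follows formally from the indexed Morita equivalence (Proposition \ref{l}).
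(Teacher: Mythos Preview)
Your chain of tensor-product identifications is correct and corresponds to what the paper dispatches in a single sentence (``We can naturally identify these two objects as ${\cal O}_X$-modules''). But this is not the content of the lemma. Since ${\mathbb G}_{{\cal I}_X^{gp}}$ takes values in $\tilde{\cal D}_{X/S}^{(m)}$-modules, the assertion is that your isomorphism respects the $\tilde{\cal D}_{X/S}^{(m)}$-actions: on the left, the one produced by Frobenius descent from the level-$0$ stratification built out of $\tilde{F'}$; on the right, the one coming from the level-$m$ stratification of Proposition~\ref{p1} built out of $\tilde{F}$. You acknowledge this only as an afterthought needed ``to deduce Theorem~\ref{t3} from the lemma'', and you outsource it to [O, Theorem~6.7]. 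That reference treats the global splitting module $\check{\cal K}_{\cal X/S}^{(m),{\cal A}}$, whose ${\cal D}$-module structure comes from a crystal; it does not tell you anything about the compatibility of the two divided Frobenii $\Psi$, $\Psi'$ constructed in Section~3 of the present paper. Indeed, the paper itself says Theorem~\ref{t3} follows \emph{in the same way as} [O, Theorem~6.7] \emph{once the lemma is established} --- so invoking [O, Theorem~6.7] for the substance of the lemma is circular.

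The paper's proof is precisely this missing step. After the one-line ${\cal O}_X$-module identification (equivalently, on the dual side, $F^*F'^*\Gamma_.\Omega^1_{X'/S}=F_{X/S}^*\Gamma_.\Omega^1_{X'/S}$), the entire argument is devoted to showing that the two $m$-PD stratifications agree. By definition of Frobenius descent, the ${\cal D}^{(m)}_{X/S}$-structure on $F^*F'^*\Gamma_.\Omega^1_{X'/S}$ is the pullback along $F_\Delta\colon P_{\tilde X/\tilde S,(m)}\to P_{\tilde{X^{(m)}}/\tilde S,(0)}$ (mod $p$) of the level-$0$ stratification defined via $\tilde\Psi'_N$. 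Hence the claim reduces to the equality $\tilde\Psi_N=\tilde\Psi'_N\circ F_\Delta$ of morphisms $P_{\tilde X/\tilde S,(m)}\to\tilde X'(N)$, and the paper obtains this from the universal property: $\tilde\Psi_N$ is characterized as the unique morphism fitting into the commutative square over $\tilde X'\times_{\tilde S}\tilde X'$ in Section~3, and $\tilde\Psi'_N\circ F_\Delta$ visibly fits into the same square. Without this argument (or an equivalent direct comparison of $\Psi_n$ with $\Psi'_n\circ F_\Delta^*$), the lemma is not proved.
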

\begin{proof}
It suffices to show that $F^{*}F'^{*}\Gamma_{.}{\Omega}_{X'/S}$ is isomorphic to $F_{X/S}^{*}\Gamma_{.}{\Omega}_{X'/S}$ as ${\cal D}_{X/S}^{(m)}$-modules.
We can naturally identify these two objects as ${\cal O}_{X}$-modules.
So we need to check that these two objects have the same ${\cal D}_{X/S}^{(m)}$-action.
Let $\tilde{X}\hookrightarrow {P_{\tilde{X}/\tilde{S}, (m)}}$ (resp. $\tilde{X^{(m)}}\hookrightarrow {P_{\tilde{X^{(m)}}/\tilde{S}, (0)}}$) be the log $m$-PD envelope of the diagonal of $\tilde{X}$ over $\tilde{S}$ (resp. the log $0$-PD envelope of the diagonal of $\tilde{X^{(m)}}$ over $\tilde{S}$).
Let us recall the ${\cal D}^{(m)}_{X/S}$-action on $F^{*}F'^{*}\Gamma_{.}{\Omega}_{X'/S}$.
There exists the unique morphism $F_{\Delta}: {P_{\tilde{X}/\tilde{S}, (m)}}\to {P_{\tilde{X^{(m)}}/\tilde{S}, (0)}}$ (see [M, Subsection 3.3]).
Then the ${\cal D}^{(m)}_{X/S}$-action on $F^{*}F'^{*}\Gamma_{.}{\Omega}_{X'/S}$ is defined by the log $m$-stratification, which is defined by the pull-back of the log $0$-stratification associated to the ${\cal D}^{(0)}_{X^{(m)}/S}$-action on $F'^{*}\Gamma_{.}{\Omega}_{X'/S}$ via the modulo $p$ reduction of $F_{\Delta}$.
So it suffices to prove the equality $\tilde{\Psi}_{N}=\tilde{\Psi'}_{N}\circ F_{\Delta}$ for a sufficiently large number $N$.
Here $\tilde{\Psi}_{N} :P_{\tilde{X}/\tilde{S}, (m)}\to \tilde{X'}(N)$ (resp. $\tilde{\Psi'}_{N}:P_{\tilde{X^{(m)}}/\tilde{S}, (0)}\to \tilde{X'}(N)$) is as in the section $3$.
By definition of $\tilde{\Psi}_{N}$, $\tilde{\Psi}_{N}$ is the unique morphism which makes the diagram 
\[\xymatrix{
{\tilde{X}} \ar[r]\ar[d]& {\tilde{X^{(m)}}}\ar[r] & {\tilde{X'}}\ar[d]\\
{P_{\tilde{X}/\tilde{S}, (m)}} \ar[r]^{\tilde{\Psi}_{N}}& {\tilde{X'}(N)} \ar[r] & {\tilde{X'}\times_{\tilde{S}}{\tilde{X'}}}
}
\]
commutative.
On the other hand, by definition of $F_{\Delta}$ and $\tilde{\Psi'}_{N}$, the following diagram is commutative
\[\xymatrix{
{\tilde{X}} \ar[r]\ar[d]& {\tilde{X^{(m)}}}\ar[d]\ar[r] & {\tilde{X'}}\ar[rd]\\
{P_{\tilde{X}/\tilde{S}, (m)}} \ar[r]^{F_{\Delta}} & {P_{\tilde{X^{(m)}}/\tilde{S}, (0)}} \ar[r]^{\tilde{\Psi'}_{N}}& {\tilde{X'}(N)}\ar[r] & {\tilde{X'}\times_{\tilde{S}}{\tilde{X'}}}
.}
\]
By the above characterization of $\tilde{\Psi}_{N}$, $\tilde{\Psi'}_{N}\circ F_{\Delta}$ must be $\tilde{\Psi}_{N}$.
\end{proof}

\section{Local-global compatibility}
Let $X\to S$ be an integral log smooth morphism defined over ${\mathbb Z}/p{\mathbb Z}$ with a lifting $\tilde{X'}\to \tilde{S}$
of $X'\to S$ modulo $p^{2}$.
We denote $(X\to S, \tilde{X'}\to \tilde{S})$ by $\cal X/S$.
Let $\hat{\Gamma}.{\cal T}_{X'/S}$ be the completion of the PD algebra ${\Gamma}.{\cal T}_{X'/S}$.
In the previous paper \cite{O}, we constructed a splitting module $\check{{\cal K}}^{(m), {\cal A}}_{\cal X/S}$ of the Azumaya algebra $\tilde{\cal D}^{(m)}_{X/S}$ over $\hat{\Gamma}.{\cal T}_{X'/S}\otimes_{{\cal O}_{X'}}{{\cal B}^{(m+1)}_{X/S}}$ (see [O, (5.6)]).
In this section, we give another construction of $\check{{\cal K}}^{(m), {\cal A}}_{\cal X/S}$
under the existence of a lifting $\tilde{X'}\to \tilde{S} \leftarrow \tilde{X}$ of the diagram $X'\to S \leftarrow X$ modulo $p^{2}$.

\subsection{Review of the previous paper}
In this subsection, we briefly recall the construction of $\check{{\cal K}}^{(m), {\cal A}}_{\cal X/S}$ given in the subsection 5.2 in \cite{O}.
For more details, see the section 5 in \cite{O}.
Assume that we are given a lifting of the diagram $X'\to S \leftarrow X$ modulo $p^{2}$.
We define an \'etale sheaf ${\cal L}^{(m)}_{{\cal X/S}}$ on $\tilde{X}$ by 
\begin{equation*}
\text{for each \'etale open $\tilde{U}$ of $\tilde{X}$}\mapsto \text{\{$f: \tilde{U}\to \tilde{X'}$ such that $f$ mod $p=F_{X/S}|_{U}$\}},
\end{equation*}
where $U$ denotes the modulo $p$ reduction of $\tilde{U}$.
Note that in our previous paper, this sheaf is denoted by ${\cal L}^{(m)}_{{\cal X/S}, \tilde{X}}$.
By the deformation theory,  ${\cal L}^{(m)}_{{\cal X/S}}$ forms a
${\cal H}om_{{\cal O}_{X}}(F_{X/S}^{*}\Omega^{1}_{X'/S}, {\cal O}_{X})\cong F^{*}_{X/S}{\cal T}_{X'/S}$-torsor [O, Lemma 5.15]. 
For $a\in {\cal L}^{(m)}_{{\cal X/S}}$ and $\varphi \in {\cal H}om({\cal L}^{(m)}_{{\cal X/S}}, {\cal O}_{X})$, 
we define the morphism of sheaves $\varphi_{a}:F^{*}_{X/S}{\cal T}_{X'/S}\to {\cal O}_{X}$ by $D\mapsto \varphi(a+D)-\varphi(a)$. 
We denote by ${\cal E}_{{\cal X/S}}^{(m)}$ the subsheaf of ${\cal H}om({\cal L}^{(m)}_{{\cal X/S}, \tilde{X}}, {\cal O}_{X})$ consisting of $\varphi:{\cal L}^{(m)}_{{\cal X/S}}\to {\cal O}_{X}$
such that $\varphi_{a}$ is an ${\cal O}_{X}$-homomorphism for any $a\in {\cal L}^{(m)}_{\cal X/S}$.
Then one can check that, for $\varphi\in {\cal E}_{{\cal X/S}}^{(m)}$, $\varphi_{a}$ does not depend on the choice of $a$ and we put $\omega_{\varphi}:=\varphi_{a}$.
We have the following sequence of ${\cal O}_{X}$-modules: 
	\begin{equation}\label{e9}
	\,\,\,\,\,0\longrightarrow {\cal O}_{X}\longrightarrow {\cal E}_{{\cal X/S}}^{(m)}\xrightarrow[\varphi\mapsto \omega_{\varphi}] \ F^{*}_{X/S}{\Omega}^{1}_{X'/S}\longrightarrow 0,
	\end{equation}
where the first map is defined by $b\in {\cal O}_{X} \mapsto {\text {\rm constant function of $b$}}$.
This is a locally splitting exact sequence of ${\cal O}_{X}$-modules.
In fact, one can check exactnesses at ${\cal O}_{X}$ and ${\cal E}_{{\cal X/S}}^{(m)}$ by definition and if we define an ${\cal O}_{X}$-homomorphism $\sigma_{a}: F^{*}_{X/S}{\Omega}^{1}_{X'/S}\to {\cal E}_{{\cal X/S}}^{(m)}$ for $a\in {\cal L}^{(m)}_{{\cal X/S}}$ by $\omega\mapsto [f\mapsto \langle \omega,f-a\rangle]$,
then $\sigma_{a}$ defines a section of ${\cal E}_{{\cal X/S}}^{(m)}\to F^{*}_{X/S}{\Omega}^{1}_{X'/S}$.
For each natural number $n$, we consider $S^{n}({\cal E}_{{\cal X/S}}^{(m)})\hookrightarrow S^{n+1}({\cal E}_{{\cal X/S}}^{(m)})$ induced from ${\cal O}_{X}\hookrightarrow {\cal E}_{{\cal X/S}}^{(m)}$ and
we define the ${\cal O}_{T}$-algebra ${\cal K}^{(m)}_{{\cal X/S}}$ by the inductive limit $\displaystyle \lim_{\to} S^{n}({\cal E}_{{\cal X/S}}^{(m)})$.
Then, by the above locally splitting exact sequence, we have ${\cal E}_{\cal X/S}^{(m)}\simeq {\cal O}_{X}\oplus F^{*}_{X/S}\Omega^{1}_{X'/S}$ in local situation.
We thus locally obtain
\begin{equation*}
\check{{\cal K}}_{\cal X/S}^{(m)}:={\cal H}om_{{\cal O}_{X}}({\cal K}_{\cal X/S}^{(m)}, {\cal O}_{X})\simeq {\cal H}om_{{\cal O}_{X}}(S^{.}F_{X/S}^{*}{\Omega^{1}_{X/S}}, {\cal O}_{X})\simeq \hat{\Gamma.}(F^{*}_{X/S}{\cal T}_{X'/S}).
\end{equation*}
 Therefore $\check{{\cal K}}^{(m), {\cal A}}_{\cal X/S}:={\cal A}_{X}^{gp}\otimes_{{\cal O}_{X}}\check{{\cal K}}^{(m)}_{\cal X/S}$ is a locally free ${\cal I}_{X}^{gp}$-indexed ${\cal A}_{X}^{gp}\otimes_{{\cal O}_{X}}{\hat{\Gamma}_{.}{\cal T}_{X'/S}}$-module of rank $1$.
By Theorem \ref{e}, we see that $\check{{\cal K}}^{(m), {\cal A}}_{\cal X/S}$ is a locally free ${\cal I}_{X}^{gp}$-indexed ${\cal B}^{(m+1)}_{X/S}\otimes_{{\cal O}_{X}}{\hat{\Gamma}_{.}{\cal T}_{X'/S}}$-module of rank $p^{r(m+1)}$.
Furthermore $\check{{\cal K}}^{(m), {\cal A}}_{\cal X/S}$ has an action of 
\begin{equation*}
\left({\cal B}_{X/S}^{(m+1)}\otimes_{{\cal O}_{X'}}\hat{\Gamma.}{\cal T}_{X'/S}\right) \otimes_{{\cal B}_{X/S}^{(m+1)}\otimes_{{\cal O}_{X'}}S^{\cdot}{\cal T}_{X'/S}}\tilde{{\cal D}}^{(m)}_{X/S} \xrightarrow{\cong}{\cal A}_{X}^{gp}\otimes_{{\cal O}_{X}} {\cal D}^{(m)}_{X/S}\otimes_{S^{\cdot}{\cal T}_{X'/S}} \hat{\Gamma.}{\cal T}_{X'/S}
\end{equation*}
(in our previous paper this sheaf is denoted by $\tilde {{\cal D}}_{X/S}^{(m), \gamma}$)
defined as follows.
First of all, we define an action of ${\cal T}_{X'/S}$ on ${\cal K}_{\cal X/S}^{(m)}$ by
\begin{equation*}
{\cal K}^{(m)}_{{\cal X/S}}\simeq S^{\cdot}(F^{*}_{X/S}{\Omega}^{1}_{X'/S})\xrightarrow{D} S^{\cdot}(F^{*}_{X/S}{\Omega}^{1}_{X'/S})\simeq{\cal K}^{(m)}_{{\cal X/S}}.
\end{equation*}
Then one can see that this action is well-defined globally on $X$ and naturally extends to $\hat{\Gamma.}{\cal T}_{X'/S}$-action on ${\cal K}^{(m)}_{{\cal X/S}}$.
On the other hand, we can endow ${\cal K}^{(m)}_{{\cal X/S}}$ with ${\cal D}^{(m)}_{X/S}$-action associated to the structure of log $m$-crystals on it.
Actually sheaves ${\cal L}^{(m)}_{{\cal X/S}}$, ${\cal E}_{\cal X/S}^{(m)}$ and ${\cal K}^{(m)}_{{\cal X/S}}$ naturally extend to log $m$-crystals on ${\rm{Cris}}^{(m)}(X/S)$ (see the subsection 5.2 of \cite{O}).
Then, we can see that these two actions extend to the action of ${\cal D}^{(m)}_{X/S}\otimes_{S^{\cdot}{\cal T}_{X'/S}} \hat{\Gamma.}{\cal T}_{X'/S}$
(see [O, Lemma 5.17]).
Therefore, by the same argument in the proof of Theorem \ref{t9}, we can see $\check{{\cal K}}^{(m), {\cal A}}_{\cal X/S}$ is a splitting module of $\tilde{\cal D}^{(m)}_{X/S}$ over $\hat{\Gamma}.{\cal T}_{X'/S}\otimes_{{\cal O}_{X}}{{\cal A}^{gp}_{X}}$.

\subsection{Glueing}
In this subsection, we give another construction of $\check{{\cal K}}^{(m), {\cal A}}_{\cal X/S}$ based on the maps $\Psi_{n}$ and $\Psi$, which we used to define the log local Cartier transform of higher level.
To do so, it suffices to construct the ${\cal D}^{(m)}_{X/S}$-module ${\cal E}_{{\cal X/S}}^{(m)}$.
Let $X\to S$ be an integral log smooth morphism of fine log schemes defined over ${\mathbb Z}/p{\mathbb Z}$.
We assume that the underlying scheme $S$ is noetherian and $X\to S$ is of finite type.
We also assume that we are given a lifting of the diagram $X'\to S \leftarrow X$ modulo $p^{2}$.

First, assume that we are given a log strong lifting $\tilde{F}$.
For each $n\in {\mathbb N}$, we define the morphism $\epsilon_{n, \tilde{F}}$ of ${\cal P}^{n}_{X/S, (m)}$-modules by the ${\cal P}^{n}_{X/S, (m)}$-linearization of a composition
\begin{equation*}
\epsilon'_{n, \tilde{F}}: {\cal O}_{X}\oplus F_{X/S}^{*}{\Omega_{X'/S}^{1}}\xrightarrow{{\rm id}\otimes \Delta} {\cal O}_{X}\oplus F_{X/S}^{*}{\Omega_{X'/S}^{1}}\otimes_{{\cal O}_{X}}F_{X/S}^{*}{\Omega_{X'/S}^{1}}\xrightarrow{{a}\oplus {\Psi}_{n}\circ i}\left({\cal O}_{X}\oplus F_{X/S}^{*}{\Omega_{X'/S}^{1}}\right)\otimes_{{\cal O}_{X}} {\cal P}^{n}_{X/S, (m)},
\end{equation*}
where $\Delta$ is defined by $x\mapsto x\otimes 1 +1\otimes x$, $a$ is defined by $x\mapsto 1\otimes x$ for $x\in {\cal O}_{X}$ and $i$ is the natural map $F_{X/S}^{*}{\Omega_{X'/S}^{1}}\to F_{X/S}^{*}\Gamma_{\cdot}{\Omega}^{1}_{X'/S}$.
Then $\{\epsilon_{n, \tilde{F}}\}$ forms a log $m$-PD stratification on ${\cal O}_{X}\oplus F_{X/S}^{*}{\Omega_{X'/S}^{1}}$ and we have the left ${\cal D}_{X/S}^{(m)}$-action on ${\cal O}_{X}\oplus F_{X/S}^{*}{\Omega_{X'/S}^{1}}$ associated to $\{\epsilon_{n, \tilde{F}}\}$.

Next, we assume we are given two log strong liftings $\tilde{F}_{1}$ and $\tilde{F}_{2}$ which have the the same domain $\tilde{X}$.
Then we obtain the morphism of ${\cal O}_{X}$-modules $\alpha: F_{X/S}^{*}{\Omega_{X'/S}^{1}}\to {\cal O}_{X}$ characterized by
\begin{eqnarray*}
F^{*}_{X/S}(dx)\mapsto a \text{ where }p\tilde{a}=\tilde{F}_{1}^{*}(\tilde{x})-\tilde{F}_{2}^{*}(\tilde{x})\\
F^{*}_{X/S}(d\log m)\mapsto b \text{ where }\tilde{F}_{2}^{*}(\tilde{m})(1+p\tilde{b})=\tilde{F}_{1}^{*}(\tilde{m}),
\end{eqnarray*}
where $\tilde{a}\in {\cal O}_{\tilde{X}}$ (resp. $\tilde{b}\in {\cal O}_{\tilde{X}}$) is a lift of $a\in {\cal O}_{X}$ (resp. $b\in {\cal O}_{X}$).
We define $u_{12}: {\cal O}_{X}\oplus F_{X/S}^{*}{\Omega_{X'/S}^{1}} \to {\cal O}_{X}\oplus F_{X/S}^{*}{\Omega_{X'/S}^{1}}$ by the composition 
\begin{equation*}
{\cal O}_{X}\oplus F_{X/S}^{*}{\Omega_{X'/S}^{1}}\xrightarrow{0\oplus \alpha}{\cal O}_{X}\hookrightarrow {\cal O}_{X}\oplus F_{X/S}^{*}{\Omega_{X'/S}^{1}},
\end{equation*}
where the second map is the natural inclusion to the first factor.
If we are given another log strong lifting $\tilde{F}_{3}$ of the same domain, then, by construction, $u_{ij}$ satisfies the relation $u_{13}=u_{12}+u_{23}$.
We now define a morphism of ${\cal O}_{X}$-modules $\phi_{12}: {\cal O}_{X}\oplus F_{X/S}^{*}{\Omega_{X'/S}^{1}}\to {\cal O}_{X}\oplus F_{X/S}^{*}{\Omega_{X'/S}^{1}}$ by 
$\phi_{12}:={\rm Id}+u_{12}$.
By the above relation, we have the cocycle condition $\phi_{13}=\phi_{12}\circ \phi_{23}$, so in particular $\phi_{12}$ is an isomorphism of ${\cal O}_{X}$-modules.
Let us give the local description of $\phi_{12}$.
Assume that we are given a logarithmic system of coordinates $\left\{m_{i}| 1\leq i\leq r\right\}$ of $X\to S$.
Then $\left\{\pi^{*}m_{i}| 1\leq i\leq r\right\}$ forms a logarithmic system of coordinates of $X'\to S$.
Let take a lifting $\tilde{m'}_{i}$ of $\pi^{*}m_{i}$.
Then, since $\tilde{F}_{1}$ (resp. $\tilde{F}_{2}$) is a log strong lifting,
we can write $\tilde{F}_{1}^{*}(\tilde{m'}_{i})=\tilde{m}_{i}^{p^{m+1}}(1+p\tilde{g}_{i}^{p^{m}})$ (resp. $\tilde{F}_{2}^{*}(\tilde{m'}_{i})=\tilde{m}_{i}^{p^{m+1}}(1+p\tilde{h}_{i}^{p^{m}})$) for some $\tilde{g}_{i}\in {\cal O}_{\tilde{X}}$ (resp. $\tilde{h}_{i}\in {\cal O}_{\tilde{X}}$)
for each $i$.
By construction of $\phi_{12}$, we then have the following lemma.
\begin{lemm}\label{l9}
\begin{equation*}
\phi_{12}\left(F_{X/S}^{*}\left(\log \pi^{*}m_{i} \right)\right)= F_{X/S}^{*}\left(\log \pi^{*}m_{i}\right)+ \left(g_{i}-h_{i} \right)^{p^{m}}.
\end{equation*}
\end{lemm}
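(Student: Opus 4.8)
The plan is to reduce the whole lemma to a single explicit computation of the map $\alpha\colon F_{X/S}^{*}\Omega^{1}_{X'/S}\to {\cal O}_{X}$ on the basis element $F_{X/S}^{*}(d\log\pi^{*}m_{i})$. Indeed, by construction $\phi_{12}={\rm Id}+u_{12}$, where $u_{12}$ is $({\rm id}\oplus\alpha)$ followed by the inclusion of ${\cal O}_{X}$ as the first summand of ${\cal O}_{X}\oplus F_{X/S}^{*}\Omega^{1}_{X'/S}$. Hence $\phi_{12}$ leaves the $F_{X/S}^{*}\Omega^{1}_{X'/S}$-component of $F_{X/S}^{*}(d\log\pi^{*}m_{i})$ untouched and adds the scalar $\alpha(F_{X/S}^{*}(d\log\pi^{*}m_{i}))$ in the ${\cal O}_{X}$-component, so everything comes down to showing $\alpha(F_{X/S}^{*}(d\log\pi^{*}m_{i}))=(g_{i}-h_{i})^{p^{m}}$.

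First I would unwind the defining property of $\alpha$ on $d\log$-elements: $\alpha(F_{X/S}^{*}(d\log\pi^{*}m_{i}))$ is the class modulo $p$ of any $\tilde{b}_{i}\in {\cal O}_{\tilde{X}}$ with $\tilde{F}_{2}^{*}(\tilde{m'}_{i})(1+p\tilde{b}_{i})=\tilde{F}_{1}^{*}(\tilde{m'}_{i})$ in ${\cal M}^{gp}_{\tilde{X}}$. Into this I would substitute the two log strong lifting expansions $\tilde{F}_{1}^{*}(\tilde{m'}_{i})=\tilde{m}_{i}^{p^{m+1}}(1+p\tilde{g}_{i}^{p^{m}})$ and $\tilde{F}_{2}^{*}(\tilde{m'}_{i})=\tilde{m}_{i}^{p^{m+1}}(1+p\tilde{h}_{i}^{p^{m}})$, cancel the common factor $\tilde{m}_{i}^{p^{m+1}}$, and expand $(1+p\tilde{h}_{i}^{p^{m}})(1+p\tilde{b}_{i})=1+p\tilde{g}_{i}^{p^{m}}$ modulo $p^{2}$. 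Since the cross term is divisible by $p^{2}$, this gives $p\tilde{b}_{i}=p(\tilde{g}_{i}^{p^{m}}-\tilde{h}_{i}^{p^{m}})$, hence $b_{i}=g_{i}^{p^{m}}-h_{i}^{p^{m}}$ after reduction modulo $p$.

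The last step is to rewrite $g_{i}^{p^{m}}-h_{i}^{p^{m}}$ as $(g_{i}-h_{i})^{p^{m}}$, which holds because ${\cal O}_{X}$ is an ${\mathbb F}_{p}$-algebra (as $X\to S$ is defined over ${\mathbb Z}/p{\mathbb Z}$), so the iterated Frobenius $x\mapsto x^{p^{m}}$ is additive and $(-h_{i})^{p^{m}}=-h_{i}^{p^{m}}$ in all characteristics. I do not expect a genuine obstacle here: the only points requiring care are keeping the modulo $p^{2}$ arithmetic straight and invoking the fact, established when $\alpha$ was introduced, that the scalar $\alpha(F_{X/S}^{*}(d\log\pi^{*}m_{i}))$ is independent of the auxiliary choices of lifts $\tilde{m}_{i}$ and $\tilde{m'}_{i}$.
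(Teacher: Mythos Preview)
Your argument is correct and is exactly what the paper intends: in the paper the lemma is stated with the remark ``By construction of $\phi_{12}$, we then have the following lemma'' and no further proof is given, so your explicit unwinding of $\phi_{12}={\rm Id}+u_{12}$ together with the mod $p^{2}$ computation of $\alpha(F_{X/S}^{*}(d\log\pi^{*}m_{i}))$ is precisely the omitted verification.
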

We need the following lemma.
\begin{lemm}
$\phi_{12}$ is an isomorphism of ${\cal D}_{X/S}^{(m)}$-modules.
\end{lemm}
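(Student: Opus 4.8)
The plan is to reduce the statement to a single identity in $\mathcal{P}^{n}_{X/S,(m)}$ which, once unwound, is an elementary characteristic-$p$ computation. Since it is already known that $\phi_{12}$ is an isomorphism of $\mathcal{O}_{X}$-modules, it suffices to prove that $\phi_{12}$ is horizontal, i.e. compatible with the two log $m$-PD stratifications $\{\epsilon_{n,\tilde{F}_{1}}\}$ and $\{\epsilon_{n,\tilde{F}_{2}}\}$ on $M:=\mathcal{O}_{X}\oplus F_{X/S}^{*}\Omega^{1}_{X'/S}$; concretely, that for each $n$ the square
\[
\begin{CD}
M @>{\epsilon_{n,\tilde{F}_{1}}}>> M\otimes_{\mathcal{O}_{X}}\mathcal{P}^{n}_{X/S,(m)}\\
@V{\phi_{12}}VV @VV{\phi_{12}\otimes\mathrm{id}}V\\
M @>{\epsilon_{n,\tilde{F}_{2}}}>> M\otimes_{\mathcal{O}_{X}}\mathcal{P}^{n}_{X/S,(m)}
\end{CD}
\]
commutes. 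Because $\epsilon_{n,\tilde{F}}$ is the $\mathcal{P}^{n}_{X/S,(m)}$-linearization of $\epsilon'_{n,\tilde{F}}$ and $\phi_{12}$ is $\mathcal{O}_{X}$-linear (so $\phi_{12}\otimes\mathrm{id}$ is $\mathcal{P}^{n}_{X/S,(m)}$-linear), commutativity of the square is equivalent to the equality $(\phi_{12}\otimes\mathrm{id})\circ\epsilon'_{n,\tilde{F}_{1}}=\epsilon'_{n,\tilde{F}_{2}}\circ\phi_{12}$ of maps $M\to M\otimes_{\mathcal{O}_{X}}\mathcal{P}^{n}_{X/S,(m)}$. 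This is an \'etale-local assertion, so I may assume $X\to S$ has a logarithmic system of coordinates $\{m_{i}\}_{1\le i\le r}$ and put $\omega_{i}:=F_{X/S}^{*}(d\log\pi^{*}m_{i})$, so that $1,\omega_{1},\dots,\omega_{r}$ generate $M$ over $\mathcal{O}_{X}$.

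Next I would evaluate the two composites on these generators. On $1$ both are the inclusion $1\mapsto 1\otimes 1$, since $\phi_{12}$ and the stratifications are trivial on the $\mathcal{O}_{X}$-summand. On $\omega_{i}$, using the construction of $\epsilon'_{n,\tilde{F}}$ through $\Psi_{n,\tilde{F}}$, the definition $\phi_{12}=\mathrm{Id}+u_{12}$ with $u_{12}(\omega_{i})=\alpha(\omega_{i})\cdot 1\in\mathcal{O}_{X}\subset M$, and a careful bookkeeping of the two structure maps $p_{0}^{*},p_{1}^{*}\colon\mathcal{O}_{X}\to\mathcal{P}^{n}_{X/S,(m)}$ that occur in $\epsilon'_{n,\tilde{F}}$, commutativity of the square reduces to the single equation in $\mathcal{P}^{n}_{X/S,(m)}$
\[
\Psi_{n,\tilde{F}_{2}}(\omega_{i})-\Psi_{n,\tilde{F}_{1}}(\omega_{i})=p_{1}^{*}\bigl(\alpha(\omega_{i})\bigr)-p_{0}^{*}\bigl(\alpha(\omega_{i})\bigr),
\]
whose right-hand side is exactly the ``twist'' contributed by the correction term $u_{12}$, and which is the only place where $\tilde{F}_{1}$ and $\tilde{F}_{2}$ can matter.

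To prove this equation I would invoke the local description of $\Psi_{n,\tilde{F}}$ from the proof of Proposition \ref{p3}: by formula (\ref{e6}), $\Psi_{n,\tilde{F}}(\omega_{i})$ equals $-\eta_{m_{i}}^{\{p^{m+1}\}}-\sum_{k=1}^{p-1}\frac{(-1)^{p^{m}k}}{k}\eta_{m_{i}}^{p^{m}k}-(1\otimes g_{i}-g_{i}\otimes 1)^{p^{m}}$, where $g_{i}$ is the function with $\tilde{F}^{*}(\tilde{m}'_{i})=\tilde{m}_{i}^{p^{m+1}}(1+p\tilde{g}_{i}^{p^{m}})$, and only the last summand depends on $\tilde{F}$. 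Writing $g_{i},h_{i}$ for the functions attached to $\tilde{F}_{1},\tilde{F}_{2}$, the left-hand side of the displayed equation is therefore $(1\otimes g_{i}-g_{i}\otimes 1)^{p^{m}}-(1\otimes h_{i}-h_{i}\otimes 1)^{p^{m}}$; since we are in characteristic $p$ the $p^{m}$-th power map is additive, so this equals $p_{1}^{*}(g_{i}^{p^{m}}-h_{i}^{p^{m}})-p_{0}^{*}(g_{i}^{p^{m}}-h_{i}^{p^{m}})$. On the other hand, the defining property of $\alpha$ (equivalently Lemma \ref{l9}) gives $\alpha(\omega_{i})=(g_{i}-h_{i})^{p^{m}}=g_{i}^{p^{m}}-h_{i}^{p^{m}}$, and the two sides match.

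I expect the main difficulty to be organizational rather than conceptual: one must carry out the unwinding in the second step with care, tracking which of $p_{0}^{*},p_{1}^{*}$ occurs at each spot in $\epsilon'_{n,\tilde{F}}$ and confirming that the sign conventions in the definitions of $\Psi_{n}$, of $\alpha$, and of $u_{12}$ are mutually compatible (the signs in the displayed equation may need to be adjusted accordingly). The only genuine mathematical input is the characteristic-$p$ ``Freshman's dream'', i.e. the Frobenius-type behaviour of $p^{m}$-th powers, which is the same phenomenon that makes the $p^{m+1}$-curvature map of Theorem \ref{t2} well defined and that already appears in the proof of Proposition \ref{p3}. As an alternative one may identify $(M,\{\epsilon_{n,\tilde{F}}\})$ with the sub-$\mathcal{D}^{(m)}_{X/S}$-module $F_{X/S}^{*}\Gamma_{\le 1}\Omega^{1}_{X'/S}$ of $F_{X/S}^{*}\Gamma_{\cdot}\Omega^{1}_{X'/S}$ equipped with the stratification of Proposition \ref{p1}, and verify horizontality of $\phi_{12}$ directly on the algebra generators $\partial_{\langle p^{s}\underline{\varepsilon}_{i}\rangle}$ of $\mathcal{D}^{(m)}_{X/S}$ using the analogue of Proposition \ref{p3} for $M$ together with Lemma \ref{l9}; this reduces to the same identity.
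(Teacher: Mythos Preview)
Your proposal is correct and follows essentially the same approach as the paper: reduce to horizontality of $\phi_{12}$ with respect to the two stratifications, work \'etale locally with a logarithmic system of coordinates, note that the check on the $\mathcal{O}_{X}$-summand is trivial, and on $\omega_{i}=F_{X/S}^{*}(d\log\pi^{*}m_{i})$ plug in the explicit formula (\ref{e6}) for $\Psi_{n,\tilde F}(\omega_{i})$ together with Lemma \ref{l9}, so that everything collapses to the Frobenius identity $(g_{i}-h_{i})^{p^{m}}=g_{i}^{p^{m}}-h_{i}^{p^{m}}$. The only cosmetic difference is that the paper writes the compatibility in the opposite direction, namely $\epsilon'_{n,\tilde F_{1}}\circ\phi_{12}=(\phi_{12}\otimes\mathrm{id})\circ\epsilon'_{n,\tilde F_{2}}$, and expands both sides fully to the common expression $\omega_{i}\otimes 1-1\otimes\bigl(\eta_{m_{i}}^{\{p^{m+1}\}}+\sum_{k=1}^{p-1}\frac{(-1)^{p^{m}k}}{k}\eta_{m_{i}}^{p^{m}k}\bigr)-(1\otimes h_{i}-g_{i}\otimes 1)^{p^{m}}$ rather than isolating your single identity in $\mathcal{P}^{n}_{X/S,(m)}$; as you anticipated, this swaps the roles of $p_{0}^{*}$ and $p_{1}^{*}$ in the displayed equation, but the content is identical.
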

\begin{proof}
It suffices to show that $\phi_{12}$ is compatible with $\epsilon_{n}$.
We may work \'etale locally on $X$.
Let $\left\{m_{i}| 1\leq i\leq r\right\}$ a logarithmic system of coordinates  of $X\to S$.
Let $g_{i}$ and $h_{i}$ be as in Lemma \ref{l9}.
$\epsilon'_{n, \tilde{F_{1}}}\circ \phi_{12}(1)=\phi_{12}\otimes {\rm id}\circ \epsilon'_{n, \tilde{F_{2}}}(1)$ is obvious.
By Lemma \ref{l9} and Lemma \ref{l2}, $\epsilon'_{n, \tilde{F_{1}}}\circ \phi_{12}\left(F_{X/S}^{*}\left(d{\log}\pi^{*}m_{i}\right)\right)$ is equal to 
\begin{eqnarray*}
&& F_{X/S}^{*}\left(d{\log}\pi^{*}m_{i}\right)\otimes 1+ 1\otimes \Psi_{n}(F^{*}_{X/S}d{\log}\pi^{*}m_{i}) +1\otimes (g_{i}-h_{i})^{p^{m}}\\ 
&=&F_{X/S}^{*}\left(d{\log}\pi^{*}m_{i}\right)\otimes 1- 1\otimes \left(\eta_{m_{i}}^{\{p^{m+1}\}} + \sum_{k=1}^{p-1}\frac{(-1)^{p^{m}k}}{k}\eta_{m_{i}}^{p^{m}k}\right)-\left(1\otimes g_{i}-g_{i}\otimes1\right)^{p^{m}}+1\otimes (g_{i}-h_{i})^{p^{m}}\\
&=&F_{X/S}^{*}\left(d{\log}\pi^{*}m_{i}\right)\otimes 1- 1\otimes \left(\eta_{m_{i}}^{\{p^{m+1}\}} + \sum_{k=1}^{p-1}\frac{(-1)^{p^{m}k}}{k}\eta_{m_{i}}^{p^{m}k}\right)-\left(1\otimes h_{i}-g_{i}\otimes1\right)^{p^{m}}
\end{eqnarray*}
On the other hand, by Lemma \ref{l9} and Lemma \ref{l2}, $\phi_{12}\otimes {\rm id}\circ \epsilon'_{n, \tilde{F_{2}}}(F_{X/S}^{*}d{\log}\pi^{*}m_{i})$ is equal to 
\begin{eqnarray*}
&&\phi_{12}\otimes {\rm id}\left( F_{X/S}^{*}\left(d{\log}\pi^{*}m_{i}\right)\otimes1 - 1\otimes \left(\eta_{m_{i}}^{\{p^{m+1}\}} + \sum_{k=1}^{p-1}\frac{(-1)^{p^{m}k}}{k}\eta_{m_{i}}^{p^{m}k}\right)-\left(1\otimes h_{i}-h_{i}\otimes1\right)^{p^{m}} \right)\\
&=& F_{X/S}^{*}\left(d{\log}\pi^{*}m_{i}\right)\otimes 1 +\left(g_{i}-h_{i} \right)^{p^{m}} \otimes 1- 1\otimes \left(\eta_{m_{i}}^{\{p^{m+1}\}} + \sum_{k=1}^{p-1}\frac{(-1)^{p^{m}k}}{k}\eta_{m_{i}}^{p^{m}k}\right)-\left(1\otimes h_{i}-h_{i}\otimes1\right)^{p^{m}}\\
&=&F_{X/S}^{*}\left(d{\log}\pi^{*}m_{i}\right)\otimes 1 - 1\otimes \left(\eta_{m_{i}}^{\{p^{m+1}\}} + \sum_{k=1}^{p-1}\frac{(-1)^{p^{m}k}}{k}\eta_{m_{i}}^{p^{m}k}\right)-\left(1\otimes h_{i}-g_{i}\otimes1\right)^{p^{m}}.
\end{eqnarray*}
This finishes the proof.
\end{proof}
We now construct a ${\cal D}_{X/S}^{(m)}$-module ${\cal E}'$ by a glueing argument and show that ${\cal E}'$ is isomorphic to ${\cal E}_{{\cal X/S}}^{(m)}$.
By Remark \ref{r2}, there exists an \'etale covering $\left\{U_{i}\right\}$ of $X$ with log strong liftings $\tilde{F}_{i}$ of $F_{U_{i}/S}$.
By the above argument, we obtain a ${\cal D}_{U_{i}/S}^{(m)}$-module ${\cal O}_{U_{i}}\oplus F_{U_{i}/S}^{*}{\Omega_{U_{i}'/S}^{1}}$ for sech $i$ and an isomorphism of ${\cal D}_{U_{ij}/S}^{(m)}$-modules $\phi_{ij}: \left({\cal O}_{U_{i}}\oplus F_{U_{i}/S}^{*}{\Omega_{U_{i}'/S}^{1}}\right)|_{U_{ij}}\xrightarrow{\cong} \left({\cal O}_{U_{j}}\oplus F_{U_{j}/S}^{*}{\Omega_{U_{j}'/S}^{1}}\right)|_{U_{ij}}$, where $U_{ij}$ is $U_{i}\times_{X}U_{j}$.
Then $\left\{{\cal O}_{U_{i}}\oplus F_{U_{i}/S}^{*}{\Omega_{U_{i}'/S}^{1}}, \phi_{ij}\right\}$ defines a glueing data of ${\cal D}_{X/S}^{(m)}$-modules and it descents to  a ${\cal D}_{X/S}^{(m)}$-module ${\cal E}'$.
We prove the following theorem.
\begin{theo}\label{t21}
${\cal E}'$ is isomorphic to ${\cal E}_{{\cal X/S}}^{(m)}$ as ${\cal D}_{X/S}^{(m)}$-modules.
\end{theo}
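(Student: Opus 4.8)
The plan is to compare the two $\mathcal{D}_{X/S}^{(m)}$-modules by realizing ${\cal E}'$ as a glueing of the local models ${\cal O}_{U_i}\oplus F_{U_i/S}^*\Omega^1_{U_i'/S}$, and showing that ${\cal E}_{\cal X/S}^{(m)}$ carries, over each $U_i$, the \emph{same} local model together with the \emph{same} transition isomorphisms $\phi_{ij}$. Since ${\cal E}'$ is by definition the descent of $\{{\cal O}_{U_i}\oplus F_{U_i/S}^*\Omega^1_{U_i'/S}, \phi_{ij}\}$, producing such a compatible system of local isomorphisms ${\cal E}_{\cal X/S}^{(m)}|_{U_i}\xrightarrow{\cong}{\cal O}_{U_i}\oplus F_{U_i/S}^*\Omega^1_{U_i'/S}$ immediately gives the desired global isomorphism ${\cal E}_{\cal X/S}^{(m)}\xrightarrow{\cong}{\cal E}'$ of $\mathcal{D}_{X/S}^{(m)}$-modules.

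\textbf{Step 1: the local isomorphism.}
Fix $U=U_i$ with its log strong lifting $\tilde F=\tilde F_i$. Recall from the review subsection that, after choosing a lifting $a$ of the diagram over $U$ (equivalently an element of ${\cal L}^{(m)}_{\cal X/S}(U)$), the splitting $\sigma_a: F^*_{X/S}\Omega^1_{X'/S}\to {\cal E}^{(m)}_{\cal X/S}$ together with the structural inclusion ${\cal O}_X\hookrightarrow {\cal E}^{(m)}_{\cal X/S}$ identifies ${\cal E}^{(m)}_{\cal X/S}|_U\cong {\cal O}_U\oplus F^*_{U/S}\Omega^1_{U'/S}$ as ${\cal O}_U$-modules. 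A log strong lifting $\tilde F$ determines such a lifting $a_{\tilde F}\in {\cal L}^{(m)}_{\cal X/S}(U)$ of $F_{U/S}$ (its reduction mod $p$ is $F_{U/S}$), hence a canonical local ${\cal O}_U$-splitting; I would take this as the definition of the local isomorphism $\psi_{\tilde F}: {\cal E}^{(m)}_{\cal X/S}|_U\xrightarrow{\cong}{\cal O}_U\oplus F^*_{U/S}\Omega^1_{U'/S}$.

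\textbf{Step 2: compatibility with the $\mathcal{D}^{(m)}$-action.}
Next I would check that $\psi_{\tilde F}$ intertwines the $\mathcal{D}_{X/S}^{(m)}$-action on ${\cal E}^{(m)}_{\cal X/S}$ (coming from its log $m$-crystal structure, as recalled in the review) with the action on ${\cal O}_U\oplus F^*_{U/S}\Omega^1_{U'/S}$ associated to the $m$-PD stratification $\{\epsilon_{n,\tilde F}\}$. Both stratifications are built from the same ingredient — the map $\Psi_n$ of \eqref{d6} — since $\Psi$ was constructed precisely from the geometry of the log strong lifting $\tilde F$, and the log $m$-crystal structure on ${\cal L}^{(m)}_{\cal X/S}$, ${\cal E}^{(m)}_{\cal X/S}$ is the one recording how liftings of $F_{X/S}$ transform along infinitesimal thickenings of the diagonal. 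Concretely, I would trace through the definition of $\epsilon'_{n,\tilde F}$ and show that, on the generator $1\in{\cal O}_U$ it reproduces the crystal transition on the constant section, and on $F^*_{X/S}(d\log\pi^*m_i)\in F^*_{U/S}\Omega^1_{U'/S}$ it reproduces (via Lemma \ref{l2}) the crystal transition of $\sigma_{a_{\tilde F}}(F^*_{X/S}d\log\pi^*m_i)$; this is the same bookkeeping already carried out in the proof that $\phi_{12}$ respects $\epsilon_n$.

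\textbf{Step 3: matching the transition maps.}
Finally, on $U_{ij}$ I must verify $\psi_{\tilde F_j}\circ\psi_{\tilde F_i}^{-1}=\phi_{ij}$. Unwinding the definitions, $\psi_{\tilde F_j}\circ\psi_{\tilde F_i}^{-1}$ is the change-of-splitting automorphism of ${\cal O}\oplus F^*\Omega^1$ induced by passing from the section $\sigma_{a_{\tilde F_i}}$ to $\sigma_{a_{\tilde F_j}}$, i.e. it is $\mathrm{Id}$ plus the ${\cal O}_X$-linear map classifying the difference $a_{\tilde F_i}-a_{\tilde F_j}\in F^*_{X/S}{\cal T}_{X'/S}$. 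On the other hand $\phi_{ij}=\mathrm{Id}+u_{ij}$ where $u_{ij}$ is built from the morphism $\alpha:F^*_{X/S}\Omega^1_{X'/S}\to{\cal O}_X$ comparing $\tilde F_i^*$ and $\tilde F_j^*$; the local formula of Lemma \ref{l9} for $\phi_{ij}$ sends $F^*_{X/S}(\log\pi^*m_k)$ to $F^*_{X/S}(\log\pi^*m_k)+(g_k-h_k)^{p^m}$, and one checks this is exactly the pairing of $d\log\pi^*m_k$ with $a_{\tilde F_i}-a_{\tilde F_j}$, using the deformation-theoretic description of ${\cal L}^{(m)}_{\cal X/S}$ as an $F^*_{X/S}{\cal T}_{X'/S}$-torsor together with the relation $\tilde F_i^*(\tilde m_k')=\tilde F_j^*(\tilde m_k')\cdot(1+p((g_k-h_k)^{p^m}))$ mod $p^2$. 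Thus $\psi_{\tilde F_j}\circ\psi_{\tilde F_i}^{-1}$ and $\phi_{ij}$ agree. Since both sides satisfy the cocycle condition (the right side by the remark $u_{13}=u_{12}+u_{23}$ before the statement, the left side automatically), the local isomorphisms $\psi_{\tilde F_i}$ glue to the desired isomorphism ${\cal E}^{(m)}_{\cal X/S}\xrightarrow{\cong}{\cal E}'$ of $\mathcal{D}_{X/S}^{(m)}$-modules.

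\textbf{Main obstacle.}
The delicate point is Step 2–3, namely keeping the two bookkeeping conventions aligned: the log $m$-crystal structure on ${\cal E}^{(m)}_{\cal X/S}$ is phrased in terms of liftings of $F_{X/S}$ over thickenings, whereas the stratifications $\epsilon_{n,\tilde F}$ are phrased via the divided Frobenius $\Psi_n$ on the $m$-PD envelope; identifying a lifting $\tilde\Psi_N:P_{\tilde X/\tilde S,(m)}\to\tilde X'(N)$ with the crystal's comparison data requires care with the normalization by $p!$ versus $p$ (Lemma \ref{l3}) and with the sign conventions in Lemma \ref{l2}. Once that dictionary is fixed, everything reduces to the coordinate computations that have already appeared, so no essentially new difficulty arises.
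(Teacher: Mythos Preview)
Your proposal is correct and follows essentially the same route as the paper: the paper also builds local isomorphisms $\phi'_{\tilde F}:{\cal O}_X\oplus F_{X/S}^*\Omega^1_{X'/S}\xrightarrow{\cong}{\cal E}^{(m)}_{\cal X/S}$ from the splitting $\sigma_{\tilde F}$, verifies that the transition $\phi'_{12}:={\phi'}_{\tilde F_1}^{-1}\circ\phi'_{\tilde F_2}$ agrees with $\phi_{12}$ via the coordinate check you describe in Step~3, and then proves ${\cal D}^{(m)}_{X/S}$-compatibility by an explicit computation of the HPD-stratification on ${\cal E}^{(m)}_{\cal X/S}$ in coordinates (decomposing the pulled-back section as a sum of a constant piece, giving the $\Psi_n$-term, and a piece identified with $F_{X/S}^*d\log\pi^*m_i\otimes1$). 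Your ``main obstacle'' is exactly where the work lies, and the paper resolves it by the concrete calculation you anticipate rather than by any additional idea.
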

\begin{proof}
Let us remember the locally split exact sequence (\ref{e9}).
If we are given a log strong lifting $\tilde{F}_{1}$, then we have a section $\sigma_{\tilde{F}_{1}}: F^{*}_{X/S}{\Omega}^{1}_{X'/S}\to {\cal E}_{{\cal X/S}}^{(m)}; \omega\mapsto [f\mapsto \langle \omega,f-\tilde{F}_{1}\rangle]$ and we have
\begin{equation*}
\phi'_{\tilde{F}_{1}}: \left({\cal O}_{X}\oplus F_{X/S}^{*}{\Omega_{X'/S}^{1}}\right)\xrightarrow{\cong} {\cal E}_{{\cal X/S}}^{(m)}; (a, \omega)\mapsto a+ \sigma_{\tilde{F}_{1}}(\omega).
\end{equation*}
We have to see the agreement of the glueing datum.
Let $\tilde{F}_{2}$ be another log strong lifting with the same domain of $\tilde{F}_{1}$.
We have $\phi'_{\tilde{F}_{2}}: \left({\cal O}_{X}\oplus F_{X/S}^{*}{\Omega_{X'/S}^{1}}\right)\xrightarrow{\cong} {\cal E}_{{\cal X/S}}^{(m)}$ in a similar manner.
We then define
\begin{equation*}
\phi'_{12}:={\phi'}_{\tilde{F}_{1}}^{-1}\circ\phi'_{\tilde{F}_{2}}: \left({\cal O}_{X}\oplus F_{X/S}^{*}{\Omega_{X'/S}^{1}}\right)\xrightarrow{\cong} {\cal E}_{{\cal X/S}}^{(m)}\xleftarrow{\cong}\left({\cal O}_{X}\oplus F_{X/S}^{*}{\Omega_{X'/S}^{1}}\right).
\end{equation*}
Let us check $\phi_{12}=\phi'_{12}$.
Since the question is \'etale local on $X$, we may work with a logarithmic system of coordinates $\left\{m_{i}| 1\leq i\leq r\right\}$ of $X\to S$.
Let $g_{i}$ and $h_{i}$ be as in Lemma \ref{l9}.
We have
\begin{eqnarray*}
\phi'_{12}(0, F_{X/S}^{*}d{\log}\pi^{*}m_{i})&=&(\sigma_{\tilde{F}_{2}}(F_{X/S}^{*}d{\log}\pi^{*}m_{i})-\sigma_{\tilde{F}_{1}}(F_{X/S}^{*}d{\log}\pi^{*}m_{i}), F_{X/S}^{*}d{\log}\pi^{*}m_{i}).
\end{eqnarray*}
As a section of ${\cal E}_{{\cal X/S}}^{(m)}$, $\sigma_{\tilde{F}_{2}}(F_{X/S}^{*}d{\log}\pi^{*}m_{i})-\sigma_{\tilde{F}_{1}}(F_{X/S}^{*}d{\log}\pi^{*}m_{i})$ sends $f\in {\cal L}^{(m)}_{\cal X/S}$ to $\langle F_{X/S}^{*}d{\log}\pi^{*}m_{i}, \tilde{F}_{1}-\tilde{F}_{2}\rangle$.
Here $\tilde{F}_{1}-\tilde{F}_{2}$ is a section of $F_{X/S}^{*}{\cal T}_{X'/S}$ characterized by 
\begin{equation*}
F^{*}_{X/S}(d\log \pi^{*}m_{i})\mapsto f \text{ where }\tilde{F}_{2}^{*}(\tilde{m_{i}})(1+p\tilde{f})=\tilde{F}_{1}^{*}(\tilde{m_{i}}).
\end{equation*}
So $\sigma_{\tilde{F}_{2}}(F_{X/S}^{*}d{\log}\pi^{*}m_{i})-\sigma_{\tilde{F}_{1}}(F_{X/S}^{*}d{\log}\pi^{*}m_{i})$ is a constant function on ${\cal L}^{(m)}_{\cal X/S}$, which is nothing but $f=\left(g_{i}-h_{i}\right)^{p^{m}}$.
This shows $\phi_{12}=\phi'_{12}$ by Lemma \ref{l9}.
Finally, let us prove the agreement of the ${\cal D}_{X/S}^{(m)}$-actions.
To do this, we explicitly calculate the log $m$-stratification on ${\cal E}_{{\cal X/S}}^{(m)}$ under the existence of log strong lifting $\tilde{F_{1}}$ and a logarithmic system of coordinates $\left\{m_{i}| 1\leq i\leq r\right\}$ of $X\to S$ (cf, the proof of [O, Lemma 5.17]).
Let $\tilde{X}\hookrightarrow \tilde{P}$ be the log $m$-PD envelope of the diagonal of $\tilde{X}$ over $\tilde{S}$.
Let $P$ denote the modulo $p$ reduction of $\tilde{P}$ and $p_{0}$ and $p_{1}$ denote the first and the second projection of $\tilde{P}$ respectively.
By definition, $\epsilon'_{n, \tilde{F_{1}}}(F_{X/S}^{*}d{\log}\pi^{*}m_{i})=F_{X/S}^{*}\left(d{\log}\pi^{*}m_{i}\right)\otimes 1- 1\otimes \left(\eta_{m_{i}}^{\{p^{m+1}\}} + \sum_{k=1}^{p-1}\frac{(-1)^{p^{m}k}}{k}\eta_{m_{i}}^{p^{m}k}\right)-\left(1\otimes g_{i}-g_{i}\otimes1\right)^{p^{m}}$.
We have to prove that the image of $F_{X/S}^{*}d{\log}\pi^{*}m_{i}$ under the map defined by
\begin{equation*}
F^{*}_{X/S}\Omega^{1}_{X'/S}\xrightarrow{\sigma_{\tilde{F_{1}}}} {\cal E}_{\cal X/S}^{(m)} \xrightarrow{p^{*}_{1}} {\cal O}_{P}\otimes {\cal E}_{\cal X/S}^{(m)}\xrightarrow{\cong} {\cal E}_{\cal X/S}^{(m)}\otimes {\cal O}_{P}\xrightarrow {\cong}\left({\cal O}_{X}\oplus F_{X/S}^{*}{\Omega_{X'/S}^{1}}\right)\otimes_{{\cal O}_{X}}{\cal O}_{P}
\end{equation*}
is equal to $\epsilon'_{n, \tilde{F_{1}}}(F_{X/S}^{*}d{\log}\pi^{*}m_{i})$. 
Here the first isomorphism is the HPD-stratification associated to the log $m$-crystal structure on ${\cal E}_{\cal X/S}^{(m)}$ and the second one is 
${\phi'}^{-1}_{\tilde{F}_{1}}\otimes {\rm id}_{{\cal O}_{P}}$.
Let $f_{P/S}$ be the composition $P\to X\xrightarrow{F_{X/S}}X'$.
Let ${\cal L}$ be an \'etale sheaf on $\tilde{P}$ defined by 
\begin{equation*}
\text{for each \'etale open $\tilde{U}$ of $\tilde{P}$}\mapsto \text{\{$f: \tilde{U}\to \tilde{X'}$ such that $f$ mod $p=f_{P/S}|_{U}$\}},
\end{equation*}
where $U$ denotes the modulo $p$ reduction of $\tilde{U}$.
Let us calculate the image of $\sigma_{\tilde{F}_{1}}(F_{X/S}^{*}d{\log}\pi^{*}m_{i})$ under the composition
\begin{equation}\label{e19}
{\cal E}_{{\cal X/S}}^{(m)}\xrightarrow{p_{1}^{*}} {\cal O}_{P}\otimes_{{\cal O}_{X}}{\cal E}_{{\cal X/S}}^{(m)} \xrightarrow{\cong} {\cal E}_{{\cal X/S}}^{(m)}\otimes_{{\cal O}_{X}}{\cal O}_{P}.
\end{equation}
We denote this image by $B$.
As a map $B: {\cal L}\to {\cal O}_{P}$, $B$ sends $\tilde{F'}\in {\cal L}$ to $b$ satisfying
\begin{equation*}
\left(\tilde{F_{1}}\circ p_{1}\right)^{*}\left(\tilde{m_{i}'}\right)(1+p\tilde{b})=\left(\tilde{F'}\circ p_{0}\right)^{*}\left(\tilde{m_{i}'}\right),
\end{equation*}
where $\tilde{b}\in {\cal O}_{\tilde{P}}$ is a lift of $b$.
Let us define the map $C: {\cal L}\to {\cal O}_{P}$ by $\tilde{F'}\in {\cal L}\mapsto c$. 
Here $c$ is characterized by 
\begin{equation*}
\left(\tilde{F_{1}}\circ p_{1}\right)^{*}\left(\tilde{m_{i}'}\right)(1+p\tilde{c})=\left(\tilde{F_{1}}\circ p_{0}\right)^{*}\left(\tilde{m_{i}'}\right),
\end{equation*}
where $\tilde{c}\in {\cal O}_{\tilde{P}}$ is a lift of $c$.
We also define the map $D: {\cal L}\to {\cal O}_{P}$ by $\tilde{F'}\in {\cal L}\mapsto d$. 
Here $d$ is characterized by 
\begin{equation*}
\left(\tilde{F_{1}}\circ p_{0}\right)^{*}\left(\tilde{m_{i}'}\right)(1+p\tilde{d})=\left(\tilde{F'}\circ p_{0}\right)^{*}\left(\tilde{m_{i}'}\right),
\end{equation*}
where $\tilde{d}\in {\cal O}_{\tilde{P}}$ is a lift of $d$.
Then $C$ and $D$ can be regarded as a section of ${\cal E}_{{\cal X/S}}^{(m)}\otimes_{{\cal O}_{X}}{\cal O}_{P}$ and $B=C+D$.
Now, by the similar calculation in the proof of Lemma \ref{l2} (note that $p!=-p$ modulo $p^{2}$), we have 
\begin{equation*}
c=-\eta_{m_{i}}^{\{p^{m+1}\}} - \sum_{k=1}^{p-1}\frac{(-1)^{p^{m}k}}{k}\eta_{m_{i}}^{p^{m}k}-\left(1\otimes g_{i}-g_{i}\otimes1\right)^{p^{m}}.
\end{equation*}
Therefore $C$ is a constant function and ${\phi'}_{\tilde{F}_{1}}^{-1}\otimes {\rm id}_{{\cal O}_{P}}(C)=-\eta_{m_{i}}^{\{p^{m+1}\}} - \sum_{k=1}^{p-1}\frac{(-1)^{p^{m}k}}{k}\eta_{m_{i}}^{p^{m}k}-\left(1\otimes g_{i}-g_{i}\otimes1\right)^{p^{m}}$.
On the other hand, by definition, $d$ can be described by the pullback of $d'\in {\cal O}_{X}$ via $p_{0}$, where $d'$ is characterized by its lift $\tilde{d'}\in {\cal O}_{\tilde{X}}$ satisfying
\begin{equation*}
\tilde{F_{1}}^{*}\left(\tilde{m_{i}'}\right)(1+p\tilde{d'})=\tilde{F'}^{*}\left(\tilde{m_{i}'}\right).
\end{equation*}
By definition, $\sigma_{\tilde{F}_{1}}(F_{X/S}^{*}d{\log}\pi^{*}m_{i})$ is the map ${\cal L}_{\cal X/S}^{(m)}\to {\cal O}_{X}$ defined by $\tilde{F'}\mapsto d'$. 
Thus $D$ corresponds to $F_{X/S}^{*}d{\log}\pi^{*}m_{i}\otimes 1$ via the isomorphism ${\phi'}_{\tilde{F}_{1}}^{-1}\otimes {\rm id}: {\cal E}_{{\cal X/S}}^{(m)}\otimes_{{\cal O}_{X}}{\cal O}_{P}\xrightarrow {\cong}\left({\cal O}_{X}\oplus F_{X/S}^{*}{\Omega_{X'/S}^{1}}\right)\otimes_{{\cal O}_{X}}{\cal O}_{P}$.
Therefore $B=C+D$ corresponds to $\epsilon'_{n, \tilde{F_{1}}}(F_{X/S}^{*}d{\log}\pi^{*}m_{i})$ via the isomorphism ${\phi'}_{\tilde{F}_{1}}^{-1}\otimes {\rm id}$.
This finishes the proof.
\end{proof}

As a consequence of Theorem \ref{t21}, the global version of a splitting module $\check{{\cal K}}^{(m), {\cal A}}_{\cal X/S}$ is reconstructed by glueing based on the maps $\Psi_{n}$ and $\Psi$ defined in a local situation.
We also know that the log global Cartier transform of higher level is a glueing of the local Cartier transform of higher level.

\section*{Acknowledgments} 
The author expresses his hearty thanks to his supervisor Atsushi Shiho for many useful discussions and the careful reading of this paper.
He would like to thank to Nobuo Tsuzuki, Atsushi Shiho and Tomoyuki Abe
for inviting him to give a talk at a conference held at Tohoku University, and to Ahmed Abbes and Fabrice Orgogozo for inviting him to give a talk at a conference held at IH\'ES.
This paper owes its existence to the work of Gros-Le Stum-Quir\'os \cite{GLQ},
Lorenzon \cite{L}, Montagnon \cite{M}, Ogus-Vologodsky \cite{OV}  and Schepler \cite{S}.
He thanks them heartily. 
This work was supported by the Program for Leading Graduate 
Schools, MEXT, Japan and the Grant-in-Aid for JSPS fellows.

\end{document}